\newcommand{\ncom}{\newcommand}
\ncom{\ul}{\underline}
\ncom{\ol}{\overline}
\ncom{\bq}{\begin{equation}}
\ncom{\eq}{\end{equation}}
\ncom{\beqn}{\begin{eqnarray*}}
\ncom{\eeqn}{\end{eqnarray*}}
\ncom{\beq}{\begin{eqnarray}}
\ncom{\eeq}{\end{eqnarray}}
\ncom{\nno}{\nonumber}
\ncom{\rar}{\rightarrow}
\ncom{\Rar}{\Rightarrow}
\ncom{\noin}{\noindent}
\ncom{\bc}{\begin{centre}}
\ncom{\ec}{\end{centre}}
\ncom{\sz}{\scriptsize}
\ncom{\rf}{\ref}
\ncom{\sgm}{\sigma}
\ncom{\Sgm}{\Sigma}
\ncom{\dt}{\delta}
\ncom{\Dt}{Delta}
\ncom{\s}{\underline{s}}
\ncom{\lmd}{\lambda}
\ncom{\Lmd}{\Lambda}
\ncom{\eps}{\epsilon}
\ncom{\pcc}{\stackrel{P}{>}}
\ncom{\dist}{{\rm\,dist}}
\ncom{\sspan}{{\rm\,span}}
\ncom{\re}{{\rm Re\,}}
\ncom{\im}{{\rm Im\,}}
\ncom{\sgn}{{\rm sgn\,}}
\ncom{\ba}{\begin{array}}
\ncom{\ea}{\end{array}}
\ncom{\eop}{\hfill{{\rule{2.5mm}{2.5mm}}}}
\ncom{\eoe}{\hfill{{\rule{1.5mm}{1.5mm}}}}
\ncom{\eof}{\hfill{{\rule{1.5mm}{1.5mm}}}}
\ncom{\hone}{\mbox{\hspace{1em}}}
\ncom{\htwo}{\mbox{\hspace{2em}}}
\ncom{\hthree}{\mbox{\hspace{3em}}}
\ncom{\hfour}{\mbox{\hspace{4em}}}
\ncom{\hsev}{\mbox{\hspace{7em}}}
\ncom{\vone}{\vskip 2ex}
\ncom{\cH}{{\mathcal H}}
\ncom{\vtwo}{\vskip 4ex}
\ncom{\vonee}{\vskip 1.5ex}
\ncom{\vthree}{\vskip 6ex}
\ncom{\vfour}{\vspace*{8ex}}
\ncom{\norm}{\|\;\;\|}
\ncom{\integ}[4]{\int_{#1}^{#2}\,{#3}\,d{#4}}
\ncom{\inp}[2]{\langle{#1},\,{#2} \rangle}
\ncom{\Inp}[2]{\big\langle{#1},\,{#2} \big\rangle}
\ncom{\vspan}[1]{{{\rm\,span}\#1 \}}}
\ncom{\dm}[1]{\displaystyle {#1}}
\ncom{\Hom}{\operatorname{Hom}}
\ncom{\Hol}{\operatorname{Hol}}
\ncom{\Ps}{\mathcal P_{\underline{s}}}
\ncom{\hl}{\mathcal H}
\ncom{\defin} {\overset {\text {\rm def} }{=}}
\newtheorem{theorem}{\bf Theorem}[section]
\newtheorem{proposition}[theorem]{\bf Proposition}
\newtheorem{corollary}[theorem]{\bf Corollary}
\newtheorem{lemma}[theorem]{\bf Lemma}
\newtheorem{conjecture}[theorem]{\bf Conjecture}
\newtheorem{remark}[theorem]{\bf Remark}
\newtheorem{definition}[theorem]{\bf Definition}
\def \s{\underline{s}}
\renewcommand{\epsilon}{\varepsilon}
\renewcommand{\kappa}{\varkappa}
\begin{document}

\title[$\mathbb{K}$-homogeneous tuple]{$\mathbb{K}$-homogeneous tuple of operators on bounded symmetric domains}

\author[S. Ghara]{Soumitra Ghara}
\address[]{Department of Mathematics and Statistics, Indian Institute of Technology,
Kanpur 208016, India} \email{ghara90@gmail.com}
\author[S. Kumar]{Surjit Kumar}
\author[P. Pramanick]{Paramita Pramanick}
\address[]{Department of Mathematics, Indian Institute of Science,
Bangalore 560012, India}
 \email{surjitkumar@iisc.ac.in}
\email{paramitap@iisc.ac.in}
 \thanks{The resarch of the first author was supported by National Post Doctoral Fellowship (Ref. No. PDF/2019/002724).\\
 The work of the second author was supported by Inspire Faculty Fellowship (Ref. No. DST/INSPIRE/04/2016/001008) and the third author was supported by National Board for Higher Mathematics (Ref. No. 2/39(2)/2015/NBHM/R \& D II/7416).}

\subjclass[2010]{Primary 47A13, 47B32, 46E20, Secondary 32M15}
\keywords{spherical tuples, bounded symmetric domain, weighted Bergman spaces, Cowen-Douglas class operators, homogeneous operators}

\date{}

\begin{abstract}
	Let $\Omega$ be an irreducible bounded symmetric domain of rank $r$ in $\mathbb C^d.$ Let $\mathbb K$ be the maximal compact subgroup of the identity component $G$ of the biholomorphic automorphism group  of the domain $\Omega$.  The group $\mathbb K$ consisting of linear transformations acts naturally  on any $d$-tuple $\boldsymbol T=(T_1,\ldots, T_d)$ of commuting bounded linear operators. 
If the orbit of this action modulo unitary equivalence is a singleton, then we say that $\boldsymbol T$ is $\mathbb{K}$-homogeneous. In this paper, we obtain a model for all $\mathbb{K}$-homogeneous  $d$-tuple $\boldsymbol{T}$  as the operators of multiplication by the coordinate functions $z_1,\ldots ,z_d$ on a reproducing kernel Hilbert space of holomorphic functions defined on $\Omega$. Using this model we obtain a criterion for (i) boundedness, (ii) membership in the Cowen-Douglas class (iii) unitary equivalence and similarity of these $d$-tuples.  In particular, we show that the adjoint of the $d$-tuple of multiplication by the coordinate functions on the weighted Bergman spaces are in the Cowen-Douglas class $B_1(\Omega)$.  For a bounded symmetric domain $\Omega$ of rank $2$, an explicit description of the operator  $\sum_{i=1}^d T_i^*T_i$ 
is given. In general, based on this formula, we make a conjecture giving the form of this operator. 
\end{abstract}

\renewcommand{\thefootnote}{}
%
%
%
%

\maketitle




\section{Introduction}
It was noted in \cite[Corollary 2]{Sh} that a weighted shift operator  $T$ is circular in the sense that $T$ is unitarily equivalent to $c T$ whenever $|c| = 1$. This class of operators was studied further by several authors \cite{G}, \cite{AHDH}.
In \cite{CY}, Chavan and Yakubovich generalized this notion to spherical tuple of operators.
A $d$-tuple $\boldsymbol T=(T_1,\ldots,T_d)$  of commuting operators is said to be 
{\it spherical} if $U\cdot \boldsymbol T$ is unitarily equivalent to $\boldsymbol T$ for all  unitary matrix $U$ in the group  $\mathcal U(d)$ of $d\times d$ unitary matrices. Here 
$U\cdot \boldsymbol T$ is the natural action of $\mathcal U(d)$ on the $d$-tuple $\boldsymbol T$. Chavan and Yakubovich proved that under some mild hypothesis, every spherical $d$-tuple is unitarily equivalent to the $d$-tuple $\boldsymbol{M}=(M_1,\ldots,M_d)$ of multiplication operators by the coordinate function $z_1,\ldots,z_d$ on  a reproducing kernel Hilbert space determined by a $\mathcal U(d)$-invariant kernel function $\sum_{j=0}^{\infty} a_j \inp{z}{w}^j$ defined on the open Euclidean unit ball $\mathbb B^d$ in $\mathbb C^d$.
One of our main objectives in this paper is to explore a notion analogous to that  of spherical operator tuples in the context of a bounded symmetric domain.

Bounded symmetric domains are the natural generalization of open unit disc in one complex variable and open Euclidean unit ball in several complex variables.
A bounded domain $\Omega \subset \mathbb{C}^d$ is said to be {\it symmetric} if for every $z \in \Omega,$ there exists a biholomorphic automorphism on $\Omega$ of period two,  having $z$ as an isolated fixed point. The domain $\Omega$ is said to be  {\it irreducible}  if it is not biholomorphically isomorphic to  a product of two non-trivial domains. We refer to \cite{Loos}, \cite{Arazy} for the definition and basic properties of bounded symmetric domains.

Let $\Omega$ be an irreducible bounded symmetric domain in $\mathbb C^d$  and let
$ \rm{Aut}(\Omega)$ denote the group of  biholomorphic automorphisms of $\Omega$, equipped with the topology of uniform convergence on compact subsets of $\Omega$. Let $G$ denote the connected component of identity in $\rm{Aut}(\Omega)$. It is known that $G$ acts transitively on $\Omega$. Let $\mathbb{K}$ be the subgroup of linear automorphisms in $G$. By Cartan's theorem \cite[Proposition 2, pp. 67]{RN}, $\mathbb{K}=\{\phi\in G:\phi(0)=0\}$. $\mathbb{K}$ is known to be a maximal compact subgroup of $G$ and $\Omega$ is ismorphic to $G/{\mathbb{K}}$. 
Note that $\mathcal U(d)$ is the subgroup of linear biholomorphic automorphisms of $\rm Aut(\mathbb B^d)$. Therefore, it is natural to replace $\mathcal U(d)$ with the subgroup $\mathbb{K}$ of linear biholomorphic automorphisms of an irreducible bounded symmetric domain  $\Omega$ and study all operator $d$-tuples $\boldsymbol T$ such that $k \cdot \boldsymbol T$ is unitarily equivalent to $\boldsymbol T$ for all $k \in \mathbb{K}$. The action of the group $\mathbb K$ on the $d$-tuples is defined below.
 The group $\mathbb{K}$ acts on $\Omega$ by the rule 
 \[k\cdot \boldsymbol z:=\big(k_1(\boldsymbol z), \ldots, k_d(\boldsymbol z)\big),\qquad k\in \mathbb{K} \mbox{~and~} \boldsymbol z\in \Omega.\]
 Note that $k_1(\boldsymbol z), \ldots, k_d(\boldsymbol z)$ are linear polynomials. Thus $k\in \mathbb{K}$ acts on any commuting $d$-tuple of bounded linear operators $\boldsymbol{T}=(T_1, \ldots, T_d)$, defined on complex seperable Hilbert space $\mathcal{H}$, naturally, via the map \[k\cdot\boldsymbol{T}:=\big(k_1(T_1, \ldots, T_d), \ldots, k_d(T_1, \ldots, T_d)\big). \] 
 \begin{definition}
 	A $d$-tuple of commuting bounded linear operators $\boldsymbol{T}=(T_1,T_2,\ldots,T_d)$  on $\mathcal{H}$ is said to be $\mathbb{K}$-homogeneous if for all $k$ in $\mathbb{K}$ the operators $\boldsymbol{T}$ and $k\cdot \boldsymbol{T}$ are unitarily equivalent, that is, for all $k$ in $\mathbb{K}$ there exists a unitary operator $\Gamma(k)$  on $\mathcal{H}$ such that 
 	\[T_j\Gamma(k)=\Gamma(k) k_j(T_1, \ldots, T_d),\qquad j=1,2,\ldots,d.\]
 	For brevity, we will write \[\boldsymbol {T} \Gamma(k)= \Gamma(k) (k\cdot\boldsymbol{T}).\]
 \end{definition}

We point out that the commuting operator tuples $\boldsymbol T =(T_1,\ldots, T_d)$ such that $\boldsymbol T$ and $g(\boldsymbol T)$ are unitarily equivalent for all $g$ in $G$, called {\it homogeneous} tuples,  have been also studied over the past few years, see \cite{MS}, \cite{MU}, \cite{MK1}.
Cowen and Douglas introduced a class of operators $B_n(\Omega)$ in the very influential paper \cite{CD}, where $\Omega\subset \mathbb C$ is a bounded domain and $n$ is a positive integer. 
In the case of open unit disc $\mathbb D$, all homogeneous operators in $B_1(\mathbb D)$ were classified by Misra in \cite{Misra}. As a corollary of his abstract classification theorem, Wilkins  provided an explicit model for all homogeneous operators in $B_2(\mathbb D)$, see  \cite{Wilkins}. Later in 2011, using techniques from complex geometry and representation theory, a complete classification of homogeneous operators in  the Cowen-Douglas class $B_n(\mathbb D)$ was obtained by Misra and Kor\'anyi in \cite{MK}. 
Homogeneous operators on an irreducible  bounded symmetric domain of type $I$, see below,  were studied by Misra and Bagachi in \cite{MisraBagchi}.
Later in  \cite{Zhang}, their results were generalized for an arbitrary irreducible bounded symmetric domain by Arazy and Zhang.


First, we fix some notations, which will be used throughout this paper. It will also help us describe our results.
For a set $X$ and positive integer $m$,  $X^m$ stands for the $m$-fold Cartesian product of $X$.
The symbols ${\mathbb N_0}, \mathbb R$ and $\mathbb C$ stand for the set of non-negative
integers, field of the real numbers and the field of complex numbers, respectively. Let $\mathcal H$ be a complex Hilbert space. Let $\mathcal{B}({\mathcal H})$ denote the unital Banach algebra of bounded linear operators on $\mathcal H.$ If $T \in \mathcal B(\mathcal H)$, then $\ker T$ denotes the kernel of $T$, the range of $T$ is denoted by $\text{ran}T$.

Every irreducible bounded symmetric domain $\Omega$ of rank $r$ can be realized as an open unit ball of a {\it Cartan factor} $Z = \mathbb C^d$. For a fixed frame $e_1, \cdots , e_r $ of pairwise orthogonal minimal tripotents, let
\[ Z= \sum_{0\leq i \leq j \leq r}Z_{ij}\]
be the  {\it joint Peirce decomposition} of $Z$ (see \cite[ pp. 57]{upmeier}). Note that $Z_{00}=\{0\}$ and $Z_{ii}=\mathbb C e_i$ for all $i=1, \ldots,r.$ Moreover, \[a:=\dim Z_{ij}, \qquad 1\leq i<j \leq r \] is independent of $i,j$ and 
\[b:=\dim Z_{0j}, \qquad 1\leq j \leq r \] is independent of $j$. The parameters $a,b$ are known to be the {\it characterstic multiplicities} of $Z$ and the numerical invariants $(r,a,b)$ determine the domain  $\Omega $ uniquely upto biholomorphic equivalence (see \cite{Arazy}). The dimension $d$ is related to the  numerical invariants $(r,a,b)$ as follows:
\[d=r+\frac{a}{2}r(r-1)+rb.\]

According to the classification due to  E. Cartan \cite{Ca}, there are six types of irreducible bounded symmetric domains upto biholomorphic equivalence (see also  \cite{Loos}). The first four types of these domains are called the classical Cartan domains, while other two types are known as the exceptional domains.
In what follows, we consider only the classical domains, that is, an irreducible bounded symmetric domain of one of the following four types: 
\begin{enumerate}
	\item [(i)]{\bf Type I} : $n\times m \; (m\geq n)$  complex matrices $\boldsymbol  z$ with $\|\boldsymbol z\| < 1.$  These  domains are determined by the triple $ (n,2,m-n). $
	\item[(ii)] {\bf Type II} : symmetric complex matrices $\boldsymbol z$ of order $n$ with $\|\boldsymbol z\| < 1.$ In this case, the triple $ (n, 1, 0)$  is complete biholomorphic invariant. 
	\item [(iii)]{\bf Type III} : $n \times n $ anti-symmetric complex matrices $\boldsymbol z$ of order $n$ with $\|\boldsymbol z\| < 1.$  Here  $r= \left[\frac{n}{2} \right] , a= 4$ and $b= 0$ if $n$ is even and $b= 2$  if $n$ is odd.
	\item [(iv)]{\bf Type IV}  ({\it The Lie ball}) : all $\boldsymbol z \in \mathbb C^d\; (d \geq 5)$ such that $1+|\tfrac{1}{2}\boldsymbol {z}^{t}\boldsymbol {z}|^2 > \overline {\boldsymbol z}^{t} \boldsymbol {z} $ and $\overline {\boldsymbol z}^{t} \boldsymbol {z}< 2,$ where $\overline {\boldsymbol z}^{t} $ is the complex conjugate of the transpose $\boldsymbol {z}^{t}.$  The triple $(2, d-2, 0)$ is complete biholomorphic invariant for these domains.
\end{enumerate}

Let $\mathcal{P}$ be the space of all analytic polynomials on $Z.$ For all  $n \in \mathbb N_0,$ let $\mathcal{P}_n$ denote the subspace of $\mathcal{P}$ consisting of all homogeneous polynomials of degree $n$. 
Clearly, as a vector space, $\mathcal{P}$ can be written as the direct sum $\sum_{n=0}^{\infty}\mathcal{P}_n$.
The group $\mathbb K$ acts on the space  $\mathcal P$ by composition, that is, $(k.p)(\boldsymbol z)=p(k^{-1} \boldsymbol z)$, $k\in \mathbb K,~ p\in \mathcal P.$ Below we describe the irreducible components of this action.
An r-tuple $\s=(s_1,s_2,\ldots,s_r)$ is called a {\it signature} if $s_1\geq s_2\geq \ldots \geq s_r \geq 0.$ Let $\overrightarrow  {\mathbb N}^r_0$ denote the set of all signatures. We associate the {\it conical polynomial}, see \cite[pp. 128]{upmeier} for the definition,  
$$\Delta_{\s}(\boldsymbol z)=\Delta_{1}^{s_1-s_2}(\boldsymbol z)\ldots \Delta_{r-1}^{s_{r-1}-s_r}(\boldsymbol z)\Delta_{r}^{s_r}(\boldsymbol z)$$  for all $\s \in \overrightarrow  {\mathbb N}^r_0$ 
and the polynomial space $\mathcal{P}_{\s}=\vee \{\Delta_{\s}\circ k;k\in \mathbb{K}\}$. It is known that  the polynomial space $\{ \mathcal{P}_{\s}\}_{\s \in \overrightarrow  {\mathbb N}^r_0}$ are precisely the $\mathbb{K}$-invariant, irreducible subspaces of $\mathcal{P}$ which are mutually $\mathbb{K}$-inequivalent, and
$$\mathcal{P}=\sum_{\s \in \overrightarrow  {\mathbb N}^r_0}\mathcal{P}_{\s }.$$
The Fischer-Fock inner product on $\mathcal{P}$, defined by $\langle p,q\rangle_{F}:=\frac{1}{\pi^d}\int_{\mathbb{C}^d}p(\boldsymbol z)\overline{q(\boldsymbol z)}e^{-|\boldsymbol z|^2}dm(\boldsymbol z)$, is $\mathbb{K}$-invariant. The reproducing kernel of the space $\mathcal{P}_{\s }$ with respect to the Fischer-Fock inner product is denoted by $K_{\s}(\boldsymbol z, \boldsymbol  w).$ Note that
\[\sum_{\s \in \overrightarrow  {\mathbb N}^r_0} {K}_{\s }(\boldsymbol z,\boldsymbol w) = e^{\boldsymbol z\cdot \overline{\boldsymbol w}}.\]
Further any $\mathbb{K}$-invariant Hilbert space $\mathcal{H}$ of analytic functions on $\Omega$ has the decomposition $$\mathcal{H}=\oplus_{\s\in \overrightarrow  {\mathbb N}^r_0} \mathcal{P}_{\s}.$$ This decomposition is called {\it Peter-Weyl decomposition} \cite{Hu}. 

Let $\boldsymbol T =(T_1, \ldots, T_d)$ be a commuting $d$-tuple of bounded linear operators acting on a complex separable Hilbert space $\mathcal H$. Also, let 
 $D_{\boldsymbol{T}}:\mathcal{H}\to\mathcal{H}\oplus\cdots \oplus\mathcal{H}$ be the operator 
\[D_{\boldsymbol{T}}h:=(T_{1}h,\ldots,T_{d}h), h\in \mathcal{H}.\]
We note that $\ker D_{T}=\cap_{i=1}^d \ker T_{i}$ is the {\it joint kernel} and 
$\sigma_p(\boldsymbol T)  = \{\boldsymbol w\in \mathbb C^d: \ker D_{\boldsymbol T -\boldsymbol w I}\not = \boldsymbol 0 \}$ is the {\it joint point spectrum} of the $d$-tuple $\boldsymbol{T}=(T_1,\ldots,T_d)$. Throughout this paper we will study a class of $\mathbb{K}$-homogeneous $d$-tuples, which is defined below. 
\begin{definition} A commuting $d$-tuple of $\mathbb{K}$-homogeneous operators $\boldsymbol T$ possessing the following properties  
\begin{enumerate}
\item[(i)] $\dim \ker D_{\boldsymbol T^*} = 1$ 
\item[(ii)] any non-zero vector $e$ in $\ker D_{\boldsymbol T^*}$ is cyclic for $\boldsymbol T$. 
\item[(iii)] $\Omega \subseteq \sigma_p(\boldsymbol T^*)$
\end{enumerate}
is said to be in the class $\mathcal A\mathbb{K}(\Omega)$. 
\end{definition}
In this paper, we provide a concrete model for all the commuting $d$-tuples $\boldsymbol T$ (which are necessarily of $\mathbb{K}$- homogeneous) in the class $\mathcal A\mathbb{K}(\Omega)$ as multiplication by the coordinate functions $z_1,\ldots ,z_d$ on a reproducing kernel Hilbert space of holomorphic functions $(\mathcal H, K)$ defined on $\Omega$. We describe the kernel $K$ in terms of the invariant kernels $K_{\s}$ of the spaces $\mathcal P_{\s}$. 

Having described the model, we obtain a criterion for boundedness of these operators. Using this criterion, we determine which $d$-tuple of multiplication operators on the {\it weighted Bergman spaces} are bounded. The boundedness criterion for  the multiplication operators on the weighted Bergman spaces has appeared before in \cite{MisraBagchi}, \cite{Zhang}. 

We also obtain criterion for the adjoint of the $d$-tuple of operators in $\mathcal A\mathbb{K}(\Omega)$ to be in the Cowen-Douglas class $B_1(\Omega_0)$ for some neighbourhood $\Omega_0\subset \Omega$ of $0\in \Omega$. In case of weighted Bergman spaces $\mathcal H^{(\nu)}$, we prove that the adjoint of the $d$-tuple of multiplication operators by the coordinate functions are in the Cowen-Douglas class $B_1(\Omega)$.

For any $\boldsymbol T$ in the class $\mathcal A\mathbb{K}(\Omega)$, we point out that  the operators $\sum_{i=1}^d T_i^*T_i$ and $\sum_{i=1}^d T_iT_i^*$ restricted to the subspace $\mathcal P_{\s}$ are  scalar times the identity.  In particular, for the  weighted Bergman spaces $\mathcal H^{(\nu)}$, \cite [Proposition 4.4]{Zhang}  provides an explicit form for the  operator $\sum_{i=1}^d T_iT_i^*$. We extend this formula for  any $\boldsymbol T$ in the class $\mathcal A\mathbb{K}(\Omega)$. Moreover, for the Hardy space of the Shilov boundary $S$ of $\Omega$, we show that $\sum_{i=1}^d M_i^*M_i$ is the rank times identity, see also \cite{At}.  Also, for any $\boldsymbol T$ in  $\mathcal A\mathbb{K}(\Omega)$, we have computed the operator $\sum_{i=1}^d T_i^*T_i$ on certain subspaces of $\hl$, and as a consequence, it is shown that the commutators $[M_i^*,M_i]$, $i=1,\ldots,d$, on the weighted Bergman spaces are compact  if and only if $r=1$. For any domain $\Omega$ of rank $2$, we obtained an explicit description of the operator  $\sum_{i=1}^d T_i^*T_i$ and conjectured the form of this operator for a domain of any rank $r>2$. This conjecture was proved by Upmeier, see \cite{Up}. 

Finally, we  study the question of unitary equivalence and similarity of $d$-tuples of operators in the class $\mathcal A\mathbb{K}(\Omega).$


\section{Model for operators in $\mathcal A\mathbb{K}(\Omega)$
}
We begin this section by providing a well known family of examples, namely, the $d$-tuple of multiplication by the coordinate functions on the weighted Bergman spaces, which belongs to the class $\mathcal A\mathbb{K}(\Omega)$.

For $\nu  \in \left \{0, \ldots, \frac{a}{2}(r-1)\right \}\cup \left( \frac{a}{2}(r-1), \infty\right),$  so called {\it Wallach set} of $\Omega$ (see \cite{Wallachset}), consider the weighted Bergman kernel 
\[ K^{(\nu)} (\boldsymbol z,\boldsymbol w)=\sum_{\s} (\nu)_{\s}K_{\s}(\boldsymbol z,\boldsymbol w) ,\qquad \boldsymbol z, \boldsymbol w \in \Omega,\] 
where $(\nu)_{\s}$ is the generalized Pochhammer symbol
\[(\nu)_{\s}:= \prod_{j=1}^{r} \left( \nu-\frac{a}{2}(j-1)\right)_{s_{j}}= \prod_{j=1}^{r}\prod_{l=1}^{s_{j}} \left(\nu-\frac{a}{2}(j-1)+l-1\right).\]
Let $\mathcal H^{(\nu)}$ denote the  weighted Bergman space of holomorphic functions on $\Omega$ determined by the reproducing kernel $K^{(\nu)}.$ If $\nu=\tfrac{d}{r}$ and $\nu= \tfrac{a}{2}(r-1)+\tfrac{d}{r},$ then the  weighted Bergman spaces $\mathcal H^{(\nu)}$ coincide with the Hardy space $H^2(S)$ over the {\it Shilov boundary} $S$ of $\Omega$
and the classical Bergman space $\mathbb A^2(\Omega)$ respectively. For $\nu > \tfrac{a}{2}(r-1),$ the multiplication $d$-tuple $\boldsymbol M^{(\nu)}= (M^{(\nu)}_1, \ldots, M^{(\nu)}_d )$ on  $\mathcal{H}^{(\nu)}$ is bounded and homogeneous (cf. \cite{MisraBagchi}, \cite{Zhang}). One  can also verify that $\boldsymbol M^{(\nu)}$ is in $\mathcal A\mathbb{K}(\Omega)$. 
Replacing $(\nu)_{\s}$ by any arbitrary positive number $a_{\s}$ with some boundedness condition, we get a large class of operator tuples in $\mathcal A\mathbb{K}(\Omega)$ and we prove that they are the all.

To facilitate the study of $\mathbb{K}$-homogeneous operators,
we recall the following result from \cite{Arazy} describing all the $\mathbb{K}$-invariant kernels on $\Omega.$ 
\begin{proposition}[Proposiition 3.4, \cite{Arazy}]\label{proparazy}
	For any $\mathbb{K}$-invariant semi-inner product $\langle \cdot, \cdot \rangle $ on the the space of polynomials $\mathcal{P}$, we have the following.
	\begin{itemize}
		\item[\rm (i)]  $\mathcal{P}_{\underline{s}}$ is orthogonal to $\mathcal{P}_{\underline{s^\prime}}$  whenever $\underline s\neq \underline {s^\prime}$.
		\item[\rm (ii)] There exists a constant $b_{\underline{s}}\geq 0$  associated to each signature $\underline{s}$ such that
		$$\langle p,q\rangle= b_{\underline{s}}\langle p,q\rangle_{\mathcal{F}}, \qquad \mbox{for all}~ p,q\in \mathcal{P}_{\s}.$$ 
		\item[\rm (iii)]
		 $b_{\underline{s}} > 0$ for all $\underline{s}$ if and only if  $\inp{\cdot}{\cdot}$ is an inner product.
		\item[\rm (iv)]
		If the evaluation at each point of $\Omega$ is continuous on $(\mathcal{P}, \langle \cdot, \cdot \rangle)$ then the completion $\mathcal{H}$ of $(\mathcal{P}, \langle \cdot, \cdot \rangle)$ is a reproducing kernel Hilbert space. Moreover the kernel $K(\boldsymbol z,\boldsymbol w)$ is of the form   
		$$K(\boldsymbol z,\boldsymbol w)=\sum_{\underline{s}}b_{\underline{s}}^{-1}K_{\s} (\boldsymbol z, \boldsymbol w),$$
		where convergence is both pointwise and uniformly on compact subsets of $\Omega\times \Omega$ and in norm.	
	\end{itemize}
\end{proposition}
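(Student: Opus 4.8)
The plan is to exploit the representation theory of the compact group $\mathbb{K}$ acting on $\mathcal{P}$, together with the decomposition $\mathcal{P}=\sum_{\s}\mathcal{P}_{\s}$ into pairwise inequivalent irreducible $\mathbb{K}$-submodules recalled above. Write $\langle\cdot,\cdot\rangle$ for the given $\mathbb{K}$-invariant semi-inner product and $\langle\cdot,\cdot\rangle_{\mathcal F}$ for the Fischer--Fock inner product, which is $\mathbb{K}$-invariant and positive definite. First I would reduce matters to a single intertwining operator: fix $n\in\mathbb{N}_0$; since $\mathcal{P}_n$ is finite dimensional and $\langle\cdot,\cdot\rangle_{\mathcal F}$ is nondegenerate on it, there is a unique linear map $A_n\colon\mathcal{P}_n\to\mathcal{P}_n$ with $\langle p,q\rangle=\langle A_n p,q\rangle_{\mathcal F}$ for all $p,q\in\mathcal{P}_n$. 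As $\langle\cdot,\cdot\rangle$ is a Hermitian positive semi-definite form, $A_n$ is self-adjoint and positive semi-definite with respect to $\langle\cdot,\cdot\rangle_{\mathcal F}$; and the $\mathbb{K}$-invariance of both forms forces $A_n$ to intertwine the $\mathbb{K}$-action, since for $k\in\mathbb{K}$
\[
\langle A_n(k\cdot p),k\cdot q\rangle_{\mathcal F}=\langle k\cdot p,k\cdot q\rangle=\langle p,q\rangle=\langle A_n p,q\rangle_{\mathcal F}=\langle k\cdot(A_n p),k\cdot q\rangle_{\mathcal F}
\]
for all $p,q$, and $k\cdot$ is invertible.

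The next step is Schur's lemma. Decomposing $\mathcal{P}_n=\bigoplus_{|\s|=n}\mathcal{P}_{\s}$ into pairwise inequivalent irreducibles, any $\mathbb{K}$-intertwiner of $\mathcal{P}_n$ preserves each summand, acts on it as a scalar, and vanishes between distinct summands. Hence there are constants $b_{\s}\ge 0$ (nonnegativity because $A_n\ge0$) with $A_n|_{\mathcal{P}_{\s}}=b_{\s}I$. This is exactly (ii): $\langle p,q\rangle=b_{\s}\langle p,q\rangle_{\mathcal F}$ on $\mathcal{P}_{\s}$. And it gives (i): if $\s\ne\s'$, then either they sit in $\mathcal{P}_n$'s of different degrees (where $\langle\cdot,\cdot\rangle$ and $\langle\cdot,\cdot\rangle_{\mathcal F}$ both pair them trivially) or they are inequivalent irreducibles inside one $\mathcal{P}_n$, so $\mathcal{P}_{\s}\perp\mathcal{P}_{\s'}$ with respect to $\langle\cdot,\cdot\rangle_{\mathcal F}$ and thus $\langle p,q\rangle=\langle A_n p,q\rangle_{\mathcal F}=b_{\s}\langle p,q\rangle_{\mathcal F}=0$.

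For (iii), put $A=\bigoplus_n A_n$ on $\mathcal{P}$; then $\langle p,p\rangle=\langle A p,p\rangle_{\mathcal F}=\|A^{1/2}p\|_{\mathcal F}^2$, so $\langle\cdot,\cdot\rangle$ is positive definite if and only if $A$ is injective if and only if every $b_{\s}>0$. For (iv), assume point evaluations on $\Omega$ are continuous and (passing first to the quotient of $\mathcal{P}$ by the radical $\bigoplus_{b_{\s}=0}\mathcal{P}_{\s}$ if necessary) that all $b_{\s}>0$; let $\mathcal{H}$ be the completion. By (i) the finite-dimensional, hence closed, subspaces $\mathcal{P}_{\s}$ are mutually orthogonal in $\mathcal{H}$ and their span is dense, so $\mathcal{H}=\bigoplus_{\s}\mathcal{P}_{\s}$ is an orthogonal Hilbert-space direct sum. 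On $\mathcal{P}_{\s}$ the $\mathcal{H}$-inner product is $b_{\s}\langle\cdot,\cdot\rangle_{\mathcal F}$, so the reproducing kernel of $\mathcal{P}_{\s}$ as a subspace of $\mathcal{H}$ is $b_{\s}^{-1}K_{\s}$ (rescaling an inner product by a positive constant rescales its kernel by the reciprocal). Since the reproducing kernel of an orthogonal direct sum of reproducing kernel spaces is the sum of the individual kernels, $K(\boldsymbol z,\boldsymbol w)=\sum_{\s}b_{\s}^{-1}K_{\s}(\boldsymbol z,\boldsymbol w)$; norm convergence of $K(\cdot,\boldsymbol w)$ is just its orthogonal expansion, and pointwise and locally uniform convergence follow from the Cauchy--Schwarz bound $|K_{\s}(\boldsymbol z,\boldsymbol w)|^2\le K_{\s}(\boldsymbol z,\boldsymbol z)K_{\s}(\boldsymbol w,\boldsymbol w)$ together with a Dini-type argument applied to the increasing continuous partial sums on the diagonal.

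The main obstacle is not a single hard estimate but assembling the structural inputs correctly: one must use the nontrivial fact, recalled above, that the spaces $\mathcal{P}_{\s}$ exhaust the irreducible $\mathbb{K}$-submodules of $\mathcal{P}$ and are pairwise $\mathbb{K}$-inequivalent, which is what makes Schur's lemma both applicable and decisive; and in (iv) one has to be careful about the degenerate case (some $b_{\s}=0$), where the completion must be formed only after quotienting by the radical $\{p:\langle p,p\rangle=0\}=\bigoplus_{b_{\s}=0}\mathcal{P}_{\s}$.
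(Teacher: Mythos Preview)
The paper does not supply its own proof of this proposition; it is quoted verbatim from Arazy's survey \cite{Arazy} and used as a black box. So there is nothing in the paper to compare your argument against. Your sketch is the standard Schur's-lemma argument one expects for such a statement, and it is essentially correct.

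One point deserves a sentence of justification. In proving (i) you write that if $|\underline s|\ne|\underline s'|$ then $\mathcal P_{\underline s}$ and $\mathcal P_{\underline s'}$ ``sit in $\mathcal P_n$'s of different degrees (where $\langle\cdot,\cdot\rangle$ and $\langle\cdot,\cdot\rangle_{\mathcal F}$ both pair them trivially)''. For the Fischer--Fock form this is clear, but for the arbitrary $\mathbb K$-invariant semi-inner product it is not automatic from your construction, since you only built $A_n$ on $\mathcal P_n\times\mathcal P_n$. The missing line is that the scalar circle $\mathbb T\subset\mathbb K$ acts on $\mathcal P_n$ by the character $c\mapsto c^{-n}$, so $\mathbb K$-invariance forces $\langle p,q\rangle=c^{\,n-m}\langle p,q\rangle$ for all $c\in\mathbb T$ whenever $p\in\mathcal P_m$, $q\in\mathcal P_n$, hence $\langle p,q\rangle=0$ if $m\ne n$. (Equivalently, you could define a single intertwiner $A\colon\mathcal P\to\mathcal P$ and invoke Schur directly across all the pairwise inequivalent $\mathcal P_{\underline s}$, regardless of degree.) With that remark added, your argument is complete.
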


The following result is a generalization of \cite[Lemma 2.10]{CY} which is necessary  for the proof of Theorem \ref{thmKhomo} giving a model for commuting $d$-tuple of operators in the class $\mathcal A\mathbb K(\Omega)$.
\begin{lemma}
Let $\boldsymbol{T}=(T_1,T_2,\ldots,T_d)$ be a $\mathbb{K}$-homogeneous $d$-tuple of commuting operators on $\mathcal{H}$. Suppose that $\ker D_{\boldsymbol{T}^*}$ is one-dimensional and is spanned by a vector $e\in \mathcal{H}$ and that $e$ is cyclic for $\boldsymbol{T}$. Then there exists a sequence of non-negative real numbers $a_{\underline{s}}$, $\underline{s}  \in \overrightarrow  {\mathbb N}^r_0 $, such that for any polynomial $p\in \mathcal P$, 
\begin{equation}\label{eqnas}
{||p(\boldsymbol{T})e||}^2=\sum_{k=0}^{\text{deg}~ p}\sum_{|\underline{s}|=k}a_{\underline{s}}\|p_{\underline{s}}\|^2_{\mathcal F}
\end{equation}
where deg $p$ is the degree of $p$ and $$p=\sum_{k=0}^{\text{deg}~p}\sum_{|\underline{s}|=k}p_{\underline{s}}$$	is the Peter-Weyl decomposition.
\end{lemma}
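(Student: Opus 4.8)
The plan is to transport the question to the polynomial algebra and then apply Proposition \ref{proparazy}. Define a form on $\mathcal{P}$ by
\[
\langle p,q\rangle_{\boldsymbol{T}}:=\langle p(\boldsymbol{T})e,\,q(\boldsymbol{T})e\rangle_{\mathcal{H}},\qquad p,q\in\mathcal{P}.
\]
Since $\boldsymbol{T}$ is a commuting tuple of bounded operators, $p\mapsto p(\boldsymbol{T})e$ is a well-defined linear map on $\mathcal{P}$, so $\langle\cdot,\cdot\rangle_{\boldsymbol{T}}$ is sesquilinear, and it is a semi-inner product because $\langle p,p\rangle_{\boldsymbol{T}}=\|p(\boldsymbol{T})e\|^2\ge 0$. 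The crux of the argument is to show that $\langle\cdot,\cdot\rangle_{\boldsymbol{T}}$ is $\mathbb{K}$-invariant; granting this, Proposition \ref{proparazy}(i)--(ii) yields non-negative constants $b_{\underline{s}}$ with $\mathcal{P}_{\underline{s}}$ orthogonal to $\mathcal{P}_{\underline{s'}}$ for $\underline{s}\neq\underline{s'}$ and $\langle p,q\rangle_{\boldsymbol{T}}=b_{\underline{s}}\langle p,q\rangle_{\mathcal{F}}$ for $p,q\in\mathcal{P}_{\underline{s}}$. Setting $a_{\underline{s}}:=b_{\underline{s}}$ and expanding $\|p(\boldsymbol{T})e\|^2=\langle p,p\rangle_{\boldsymbol{T}}$ along the Peter--Weyl decomposition $p=\sum_{\underline{s}}p_{\underline{s}}$, which is orthogonal for $\langle\cdot,\cdot\rangle_{\boldsymbol{T}}$, gives \eqref{eqnas} at once.

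To establish $\mathbb{K}$-invariance I would first iterate the intertwining relation. From $T_j\Gamma(k)=\Gamma(k)k_j(\boldsymbol{T})$, the commutativity of the $T_j$ and the fact that each $k_j(\boldsymbol{T})$ is a linear combination of $T_1,\dots,T_d$, an induction on degree gives $p(\boldsymbol{T})\Gamma(k)=\Gamma(k)\,(k^{-1}\cdot p)(\boldsymbol{T})$ for every $p\in\mathcal{P}$, where $k^{-1}\cdot p$ is the $\mathbb{K}$-action $\boldsymbol z\mapsto p(k\boldsymbol z)$ introduced above. Replacing $p$ by $k\cdot p$ and using $k^{-1}\cdot(k\cdot p)=p$ yields
\[
(k\cdot p)(\boldsymbol{T})\,\Gamma(k)=\Gamma(k)\,p(\boldsymbol{T}).
\]
Next I would check that $\Gamma(k)$ preserves $\ker D_{\boldsymbol{T}^*}$: taking adjoints in $T_j\Gamma(k)=\Gamma(k)k_j(\boldsymbol{T})$ and using unitarity of $\Gamma(k)$ gives $T_j^*=\Gamma(k)\,k_j(\boldsymbol{T})^*\,\Gamma(k)^*$, so for $e\in\ker D_{\boldsymbol{T}^*}$ we get $T_j^*\Gamma(k)e=\Gamma(k)\,k_j(\boldsymbol{T})^*e$; since $k_j(\boldsymbol{T})^*$ is a linear combination of $T_1^*,\dots,T_d^*$, each annihilating $e$, we conclude $T_j^*\Gamma(k)e=0$ for all $j$. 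As $\ker D_{\boldsymbol{T}^*}$ is one-dimensional, $\Gamma(k)e=c(k)e$ with $|c(k)|=1$. Feeding this into the displayed relation at the vector $e$ gives $(k\cdot p)(\boldsymbol{T})e=c(k)^{-1}\Gamma(k)\,p(\boldsymbol{T})e$, and therefore
\[
\langle k\cdot p,\,k\cdot q\rangle_{\boldsymbol{T}}=|c(k)|^{-2}\,\langle\Gamma(k)p(\boldsymbol{T})e,\,\Gamma(k)q(\boldsymbol{T})e\rangle_{\mathcal{H}}=\langle p(\boldsymbol{T})e,\,q(\boldsymbol{T})e\rangle_{\mathcal{H}}=\langle p,q\rangle_{\boldsymbol{T}},
\]
which is exactly $\mathbb{K}$-invariance.

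The step I expect to require the most care is the bookkeeping around the distinguished vector $e$: one must verify that the one-dimensionality of $\ker D_{\boldsymbol{T}^*}$ genuinely forces $\Gamma(k)e$ to be a unimodular multiple of $e$, and that the polynomial functional calculus intertwines as claimed for all $p$, not merely the coordinate functions. (The cyclicity of $e$ is not needed for \eqref{eqnas} itself, but it is what makes $\langle\cdot,\cdot\rangle_{\boldsymbol{T}}$ recover the full Hilbert space structure, which is used afterwards in Theorem \ref{thmKhomo}.) Once these points are settled, the conclusion is an immediate consequence of Proposition \ref{proparazy}.
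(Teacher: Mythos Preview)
Your proposal is correct and follows essentially the same approach as the paper: define the semi-inner product $\langle p,q\rangle=\langle p(\boldsymbol{T})e,q(\boldsymbol{T})e\rangle_{\mathcal H}$, use the intertwining relation together with the one-dimensionality of $\ker D_{\boldsymbol{T}^*}$ to see that $\Gamma(k)e$ is a unimodular multiple of $e$, deduce $\mathbb{K}$-invariance of the form, and then apply Proposition~\ref{proparazy}. The only cosmetic difference is that you define the form on all of $\mathcal P$ at once, whereas the paper first defines it on each $\mathcal P_{\underline{s}}$ and then assembles; your organization is slightly cleaner but the content is the same.
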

\begin{proof}
Since $\boldsymbol{T}$ is $\mathbb{K}$-homogeneous, for each $k\in \mathbb{K}$ there exists a unitary operator $\Gamma(k)$ on $\mathcal H$ such that
\[ T_j\Gamma(k)=\Gamma(k)(k\cdot \boldsymbol{T})_j,~~j=1,\ldots,d.\]
Hence $T^*_j{\Gamma(k)}={\Gamma(k)}(k\cdot \boldsymbol{T})_j^*$, $j=1,\ldots,d.$ Since $(k\cdot \boldsymbol{T})_j$ is a linear combination of $T_1,\ldots,T_d$ and $e \in \ker D_{\boldsymbol{T}^*}$, it follows that 
${\Gamma(k})e$ belongs to  $\ker D_{\boldsymbol{T}^*}$ for all $k\in \mathbb{K}$.
Furthermore, since $\ker D_{\boldsymbol{T}^*}$ is one dimensional and spanned by $e$, we see that $\Gamma(k)e=\eta(k)e$ for some $\eta(k)$ such that $|\eta(k)|=1$. We now define a semidefinite sesquilinear product on $\mathcal{P}_{\underline{s}}$ for all $\s \in \overrightarrow  {\mathbb N}^r_0$ by the formula
	$$\langle p, q\rangle_{\mathcal{P}_{\underline{s}}}=\langle p(\boldsymbol{T})e,q(\boldsymbol{T})e\rangle_{\mathcal{H}}$$
	Now for any $k\in \mathbb{K}$ we have 
	\begin{align*} 
	 \langle p(k\cdot \boldsymbol z), q(k\cdot \boldsymbol z)\rangle_{\mathcal{P}_{\underline{s}}}
	&=\langle p(k\cdot \boldsymbol{T})e, q(k\cdot \boldsymbol{T})e\rangle_{\mathcal{H}}\\
	&=\langle \Gamma(k)^* p(\boldsymbol{T})\Gamma(k)e,\Gamma(k)^* q(\boldsymbol{T})\Gamma(k)e\rangle_{\mathcal{H}}\\
	&=\langle p(\boldsymbol{T})\Gamma(k)e, q(\boldsymbol{T})\Gamma(k)e\rangle_{\mathcal{H}}\\
	&=\langle  p(\boldsymbol{T})\eta(k)e, q(\boldsymbol{T})\eta(k)e\rangle_{\mathcal{H}}\\
	&=|\eta(k)|^2\langle p(\boldsymbol{T})e, q(\boldsymbol{T})e\rangle_{\mathcal{H}}\\
	&=\langle p(\boldsymbol{T})e, q(\boldsymbol{T})e\rangle_{\mathcal{H}}\\
	&=\langle p, q\rangle_{\mathcal{P}_{\underline{s}}}.
	\end{align*}
	So $\langle .,. \rangle_{\mathcal{P}_{\underline{s}}}$ is an $\mathbb{K}$-invariant semi-inner product on each ${\mathcal{P}_{\underline{s}}}$. Since $\{\mathcal{P}_{\underline{s}}\}_{\s \in \overrightarrow  {\mathbb N}^r_0}$ are mutually orthogonal spaces, it follows that  $\sum_{\s} \langle \cdot, \cdot \rangle_{\mathcal{P}_{\underline{s}}}$ defines a $\mathbb{K}$-invariant semi-inner product $\langle \cdot, \cdot \rangle$ on $\oplus_{\s}\mathcal{P}_{\s}$. Thus by Proposition \ref{proparazy}, there exists a non-negative constant $a_{\s}$ such that 
	$$\langle p,q \rangle_{\mathcal{P}_{\underline{s}}}=a_{\underline{s}} \langle p,q \rangle_{\mathcal{F}}.$$ This completes the proof of lemma.
	\end{proof}

   For all the classical domains, it can be easily verified that $\Omega =\{\boldsymbol w \in \mathbb C^d : \overline {\boldsymbol{w} }\in \Omega\}.$  Consequently, in the following theorem, the Hilbert space of holomorphic functions that we constructn live on $\Omega$ rather than $\{\boldsymbol w \in \mathbb C^d : \overline {\boldsymbol{w} }\in \Omega\}.$ The next result provides an analytic model for any $d$-tuple of operators $\boldsymbol T$ in $\mathcal {A}\mathbb K(\Omega)$.
\begin{theorem}\label{thmKhomo}
	If $\boldsymbol{T}$ is a $d$-tuple of operators in $\mathcal A \mathbb {K}(\Omega)$, then $\boldsymbol{T}$ is unitarily equivalent to a $d$-tuple $\boldsymbol{M}=(M_1, \ldots, M_d )$ of multiplication by the coordinate functions $z_1,\ldots ,z_d$ on a reproducing kernel Hilbert space $H_K$ of holomorphic functions defined on $\Omega$  with  $K(\boldsymbol z,\boldsymbol w)=\sum a_{\s}^{-1}K_{\s}(\boldsymbol z,\boldsymbol w)$ for all $\boldsymbol z, \boldsymbol w \in \Omega,$ for some choice of positive real numbers $a_{\underline{s}}$ with $a_{\underline{0}}=1$. 
\end{theorem}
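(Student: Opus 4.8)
The plan is to construct the model Hilbert space directly from the sequence $a_{\underline s}$ produced by the previous lemma, and then build the unitary intertwiner. First I would apply the lemma to $\boldsymbol T$: since $\boldsymbol T \in \mathcal A\mathbb K(\Omega)$, it is $\mathbb K$-homogeneous with one-dimensional $\ker D_{\boldsymbol T^*}$ spanned by a cyclic vector $e$, so there are non-negative reals $a_{\underline s}$ with $\|p(\boldsymbol T)e\|^2 = \sum_k\sum_{|\underline s|=k} a_{\underline s}\|p_{\underline s}\|_{\mathcal F}^2$ for every polynomial $p$. Normalizing $e$ gives $a_{\underline 0}=1$. The first thing to check is that $a_{\underline s}>0$ for \emph{every} signature $\underline s$: if some $a_{\underline s}=0$ then $p(\boldsymbol T)e=0$ for all $p\in\mathcal P_{\underline s}$, and I would argue that the $\mathbb K$-invariant subspace $\{h : \langle h, p(\boldsymbol T)e\rangle = 0 \ \forall p\in\mathcal P_{\underline s}\}$ — or rather the closure of the polynomials hitting zero — contradicts cyclicity of $e$ together with condition (iii), $\Omega\subseteq\sigma_p(\boldsymbol T^*)$; concretely, $p_{\underline s}(\boldsymbol T)e = 0$ for a nonzero $p_{\underline s}\in\mathcal P_{\underline s}$ forces $e \perp \overline{\operatorname{ran}}\, p_{\underline s}(\boldsymbol T)$, and pairing against the eigenvectors $K(\cdot,\boldsymbol w)$-type vectors of $\boldsymbol T^*$ at points $\boldsymbol w\in\Omega$ shows $p_{\underline s}$ vanishes identically on $\Omega$, hence is the zero polynomial.

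Next, with $a_{\underline s}>0$ fixed, define $H_K$ to be the reproducing kernel Hilbert space with kernel $K(\boldsymbol z,\boldsymbol w)=\sum_{\underline s} a_{\underline s}^{-1} K_{\underline s}(\boldsymbol z,\boldsymbol w)$. I would first verify this is a genuine positive definite kernel on $\Omega\times\Omega$ and that point evaluations are bounded: each $K_{\underline s}$ is a polynomial kernel, the partial sums are positive definite, and convergence on compacta follows because $\sum a_{\underline s}^{-1}K_{\underline s}$ is dominated termwise, after the earlier lemma's norm identity is combined with the fact that $\|p(\boldsymbol T)e\|^2 \le \|p(\boldsymbol T)\|^2\|e\|^2$ forces a lower bound on $a_{\underline s}$ that is enough for local uniform convergence — here one uses that the $\mathcal P_{\underline s}$ sit inside $\mathcal P_{|\underline s|}$ and $\sum_{|\underml s|=k}K_{\underline s} \le e^{\boldsymbol z\cdot\overline{\boldsymbol w}}$ termwise by the displayed identity $\sum_{\underline s}K_{\underline s}=e^{\boldsymbol z\cdot\overline{\boldsymbol w}}$. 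By Proposition \ref{proparazy}(iv) the resulting $H_K$ is an RKHS of holomorphic functions on $\Omega$ (using $\Omega = \{\overline{\boldsymbol w}:\boldsymbol w\in\Omega\}$ for the classical domains, as noted just before the theorem) in which $\mathcal P$ is dense and $\|p\|_{H_K}^2 = \sum_{\underline s} a_{\underline s}\|p_{\underline s}\|_{\mathcal F}^2$.

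Then I would define $V:\mathcal H\to H_K$ on the dense subspace $\{p(\boldsymbol T)e : p\in\mathcal P\}$ by $V\big(p(\boldsymbol T)e\big) = p$. By the lemma's identity and the computation of $\|p\|_{H_K}$ just above, $V$ is isometric on this dense subspace, hence extends to a unitary $\mathcal H\to H_K$ (surjectivity because polynomials are dense in $H_K$; well-definedness because if $p(\boldsymbol T)e=0$ then $\|p\|_{H_K}=0$ so $p=0$ in $H_K$). It remains to check the intertwining $V T_j = M_j V$: this is immediate on the dense set, since $V\big(T_j p(\boldsymbol T)e\big) = V\big((z_j p)(\boldsymbol T)e\big) = z_j p = M_j p = M_j V\big(p(\boldsymbol T)e\big)$, using that $T_1,\dots,T_d$ commute so $T_j p(\boldsymbol T) = (z_jp)(\boldsymbol T)$; boundedness of $M_j$ on $H_K$ then follows a posteriori from $\|T_j\|$. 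Finally, $\boldsymbol M$ has one-dimensional joint cokernel spanned by the constant function $1 = K(\cdot,0)/K(0,0)$ and is in $\mathcal A\mathbb K(\Omega)$, consistent with the hypothesis.

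The main obstacle I expect is the strict positivity $a_{\underline s}>0$ and, relatedly, the convergence/boundedness-of-evaluations claim for $K$: the lemma only hands us $a_{\underline s}\ge 0$, and one must genuinely invoke conditions (ii) and (iii) in the definition of $\mathcal A\mathbb K(\Omega)$ — cyclicity and $\Omega\subseteq\sigma_p(\boldsymbol T^*)$ — to rule out degeneracy and to pin down that the completion lives on all of $\Omega$ rather than a smaller set. Everything else (defining $V$, checking it is a well-defined isometry, verifying the intertwining on polynomials, extending by density) is formal once the kernel $K$ is known to define a legitimate RKHS of holomorphic functions on $\Omega$.
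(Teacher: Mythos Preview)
Your overall strategy coincides with the paper's---define $\langle p,q\rangle:=\langle p(\boldsymbol T)e,q(\boldsymbol T)e\rangle_{\mathcal H}$ on $\mathcal P$, identify it via the lemma, complete to an RKHS, and transport $\boldsymbol T$ to $\boldsymbol M$ by $p(\boldsymbol T)e\leftrightarrow p$---but there is a real gap in the step that produces the RKHS on $\Omega$. The inequality $\|p(\boldsymbol T)e\|^2\le\|p(\boldsymbol T)\|^2\|e\|^2$ yields an \emph{upper} bound on $a_{\underline s}$, not a lower one, so it says nothing about $a_{\underline s}^{-1}$ and cannot force $\sum_{\underline s} a_{\underline s}^{-1}K_{\underline s}(\boldsymbol z,\boldsymbol w)$ to converge; comparison with $\sum_{\underline s} K_{\underline s}=e^{\boldsymbol z\cdot\overline{\boldsymbol w}}$ is likewise useless without control of the coefficients $a_{\underline s}^{-1}$. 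What is actually required is boundedness of each point evaluation on $(\mathcal P,\|\cdot\|)$ with $\|p\|:=\|p(\boldsymbol T)e\|$; that is exactly the hypothesis of Proposition~\ref{proparazy}(iv), which then delivers the RKHS, the kernel formula $K=\sum_{\underline s} a_{\underline s}^{-1}K_{\underline s}$, and the convergence simultaneously.

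The paper obtains bounded evaluations directly from the same eigenvector computation you invoke for positivity. For $\boldsymbol w\in\Omega$ pick $x\ne 0$ with $T_j^*x=\bar w_j x$ for all $j$; then $\langle p(\boldsymbol T)e,x\rangle=p(\boldsymbol w)\langle e,x\rangle$, and cyclicity of $e$ forces $\langle e,x\rangle\ne 0$ (otherwise $x\perp\{p(\boldsymbol T)e:p\in\mathcal P\}$, which is dense), whence
\[
|p(\boldsymbol w)|\le\frac{\|x\|}{|\langle e,x\rangle|}\,\|p(\boldsymbol T)e\|\quad\text{for every }p\in\mathcal P.
\]
This single estimate disposes of both issues at once: it shows the semi-inner product is nondegenerate (so every $a_{\underline s}>0$) and that the completion carries bounded point evaluations at every $\boldsymbol w\in\Omega$. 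As a side remark, your intermediate claim ``$p_{\underline s}(\boldsymbol T)e=0$ forces $e\perp\overline{\operatorname{ran}}\,p_{\underline s}(\boldsymbol T)$'' is not correct as written (it only says $e\in\ker p_{\underline s}(\boldsymbol T)$), but it becomes unnecessary once the argument is organized as above.
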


\begin{proof}
	Since $\Omega\subseteq \sigma_p(\boldsymbol{T}^*),$ for each $\boldsymbol w \in \Omega$ there exists a non-zero vector $x \in \mathcal H,$ such that $T^*_j x=\bar{w}_j x$ for all $j=1,2, \ldots, d.$ Thus for any polynomial $p \in \mathcal P,$ we have $p(\boldsymbol{T}^*) x= p(\overline {\boldsymbol w})x$. If $e \in ker D_{\boldsymbol{T}^*}$ is a cyclic vector for $T$, then  
	\beqn p(\boldsymbol w)\inp{e}{x}_{\mathcal H} = \inp{e}{\overline{p(\boldsymbol w)}x}_{\mathcal H} = \inp{e}{\bar{p}(\boldsymbol{T}^*)x}_{\mathcal H} =\inp{p(\boldsymbol{T})e}{x}_{\mathcal H}.\eeqn
	Since $e$ is a cyclic vector for $T$ of norm $1$ and $x \neq 0$, we have that $\inp{e}{x}_{\mathcal H}\neq 0.$ Thus
	\beqn |p(w)|\leq \frac{\|p(\boldsymbol{T})e\|_{\mathcal H}\|x\|_{\mathcal H}}{|\inp{e}{x}_{\mathcal H}|}.\eeqn
	It follows that evaluation at $\boldsymbol w \in \Omega$ is bounded and therefore the semi-inner product defined by the rule $\inp{p}{q}_{\mathcal{P}_{\underline{s}}} =\inp{p(\boldsymbol{T})e}{q(\boldsymbol{T})e}_{\mathcal H}$ is an inner product on each $\mathcal P_{\underline{s}}.$  This gives rise to an inner product $\inp{\cdot}{\cdot}$ on the space of polynomials $\mathcal P$. The sequence $a_{\s}$ of Lemma 2.3, using Proposition 2.2(c), is now evidently positive. Moreover, since $\|e\|=1$, it follows from \eqref{eqnas} that $a_{\underline{0}}=1$. Thus the completion of  $(\mathcal P, \inp{\cdot}{\cdot})$, say $H_{K}$, is a reproducing kernel Hilbert space, where 
	\beqn K(\boldsymbol z,\boldsymbol w)=\sum a_{\s}^{-1}K_{\s}(\boldsymbol z,\boldsymbol w), \qquad \boldsymbol z,\boldsymbol w \in \Omega.\eeqn
	Clearly, the map $p \rightarrow p(\boldsymbol{T})e$  extends to a unitary from  $H_{K}$ to $\mathcal H,$ which intertwines $\boldsymbol{T}$ with the multiplication $d$-tuple $\boldsymbol{M}=(M_1, \ldots, M_d )$ on $H_K$.
\end{proof}
\begin{proposition}
If $\boldsymbol{T}$ is a $d$-tuple of operators in $\mathcal A \mathbb {K}(\Omega)$, then there exists a unitary representation $\Gamma:\mathbb{K} \to \mathcal U(\hl)$ such that  
$$\boldsymbol T\Gamma(k)=\Gamma(k)(k\cdot\boldsymbol{T}).$$
\end{proposition}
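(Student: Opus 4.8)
The plan is to use the model obtained in Theorem~\ref{thmKhomo} together with the classical fact that the $\mathbb{K}$-action on $\mathcal{P}$ decomposes as the multiplicity-free sum $\bigoplus_{\s}\mathcal{P}_{\s}$ of irreducible subspaces, and that the inner product on the model space $H_K$ is, by construction, a $\mathbb{K}$-invariant inner product on $\mathcal{P}$. By Theorem~\ref{thmKhomo} we may assume $\boldsymbol{T}=\boldsymbol{M}$ acts on $H_K$ with $K=\sum a_{\s}^{-1}K_{\s}$, so it suffices to produce the representation $\Gamma$ in this concrete picture.

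First I would define, for each $k\in\mathbb{K}$, the composition operator $C_k$ on polynomials by $(C_k p)(\boldsymbol z)=p(k^{-1}\boldsymbol z)$. Each subspace $\mathcal{P}_{\s}$ is $\mathbb{K}$-invariant, and on $\mathcal{P}_{\s}$ the $H_K$-inner product equals $a_{\s}^{-1}$ times the Fischer--Fock inner product (this is exactly how $H_K$ was built, via Proposition~\ref{proparazy}); since $\langle\cdot,\cdot\rangle_{\mathcal{F}}$ is $\mathbb{K}$-invariant, so is $\langle\cdot,\cdot\rangle_{H_K}$ on each $\mathcal{P}_{\s}$, hence on all of $\mathcal{P}$ by orthogonality of the $\mathcal{P}_{\s}$'s. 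Therefore $C_k$ extends to a unitary operator $\Gamma(k)$ on $H_K$, and $k\mapsto\Gamma(k)$ is a group homomorphism into $\mathcal{U}(H_K)$; strong continuity follows from continuity of the $\mathbb{K}$-action on $\Omega$ together with the fact that polynomials of bounded degree form a finite-dimensional invariant space on which the action is evidently continuous, and these are dense. Finally I would verify the intertwining relation: for $p\in\mathcal{P}$,
\[
(M_j\Gamma(k)p)(\boldsymbol z)=z_j\,p(k^{-1}\boldsymbol z),\qquad
(\Gamma(k)(k\cdot\boldsymbol{M})_j p)(\boldsymbol z)=\big((k\cdot\boldsymbol{M})_j p\big)(k^{-1}\boldsymbol z)=k_j(k^{-1}\boldsymbol z)\,p(k^{-1}\boldsymbol z)=z_j\,p(k^{-1}\boldsymbol z),
\]
using that $k_j(k^{-1}\boldsymbol z)$ is the $j$-th coordinate of $k(k^{-1}\boldsymbol z)=\boldsymbol z$. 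This establishes $\boldsymbol{M}\Gamma(k)=\Gamma(k)(k\cdot\boldsymbol{M})$ on the dense subspace $\mathcal{P}$, hence on $H_K$, and transporting back along the unitary of Theorem~\ref{thmKhomo} gives the claim for $\boldsymbol{T}$.

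The only genuinely delicate point is \emph{strong continuity} of $\Gamma$ (the definition already presupposes, or at least is cleanest with, a genuine unitary \emph{representation}); everything else is bookkeeping. I expect this to be handled by the remark that on $H_K$ the polynomials are dense and each graded piece $\bigoplus_{|\s|\le n}\mathcal{P}_{\s}$ is finite-dimensional and $\mathbb{K}$-invariant, so the restriction of $\Gamma$ there is a continuous finite-dimensional representation; a standard $\varepsilon/3$ argument then gives strong continuity on all of $H_K$. If the paper only needs the bare intertwining identity (no continuity), even this can be skipped.
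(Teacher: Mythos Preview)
Your approach is correct and is essentially the same as the paper's: reduce via Theorem~\ref{thmKhomo} to the model $\boldsymbol{M}$ on $H_K$ and take $\Gamma(k)f=f\circ k^{-1}$. The paper's proof is a two-line sketch that simply asserts this $\Gamma$ is a unitary representation satisfying the intertwining relation, whereas you have unpacked the ``clearly'' by explaining unitarity through $\mathbb{K}$-invariance of the inner product, writing out the intertwining computation, and flagging strong continuity (which the paper does not address).
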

\begin{proof}
By Theorem \ref{thmKhomo},  $\boldsymbol{T}$ is unitarily equivalent to the $d$-tuple $\boldsymbol M=(M_1,\ldots,M_d)$ of multiplication operators on a reproducing kernel Hilbert space $H_K$ of holomorphic functions defined on $\Omega$  with  a kernel $K(z,w)$ which is $\mathbb{K}$-invariant.
 Clearly, the map $\Gamma$ on $\hl_K$ given by $\Gamma (k)(f)=f\circ k^{-1}(\cdot)$ is a unitary representation of $\mathbb{K}$ satisfying the intertwining condition.
\end{proof}

\begin{remark}
Since $\mathbb{K}$ is a subgroup of the group $\mathcal U(d)$  of unitary linear transformations on $\mathbb C^d$, every spherical $d$-tuple $\boldsymbol T=(T_1,\ldots, T_d)$ is $\mathbb{K}$-homogeneous.  Conversely, a $\mathbb{K}$-homogeneous $d$-tuple of theorem \ref{thmKhomo} is spherical if and only if $a_{\s}=a_{\underline {s}'}$ for all signatures $\s, \underline {s}'$ with $|\s|=|\underline {s}'|$.
\end{remark}

\begin{remark}
We also point out that, by the spectral mapping theorem, the Taylor joint
spectrum $\sigma(\boldsymbol T)$ of a $\mathbb K$-homogeneous operator $\boldsymbol T$ is $\mathbb K$-invariant, that is, if 
$\boldsymbol w$ belongs to $\sigma(\boldsymbol T)$, then $k.\boldsymbol w$ also belongs to $\sigma(\boldsymbol T)$ for all $k\in \mathbb K.$
\end{remark}

\section{Boundedness of the multiplication tuple}
Throughout the rest of the paper, let $K^{(a)}:\Omega \times \Omega \to \mathbb C$ denote the kernel function given by the  formula $K^{(a)}(\boldsymbol z,\boldsymbol w)=\sum_{\s} a_{\s} K_{\s}(\boldsymbol z,\boldsymbol w)$, $\boldsymbol z,\boldsymbol w\in \Omega$, for some choice of positive real numbers $a_{\s}$. The positivity  of the sequence $a_{\s}$ ensures that $K^{(a)}$ is a positive definite kernel. Thus it determines a unique Hilbert space $\mathcal H^{(a)}\subseteq \text{Hol}(\Omega)$ with the reproducing property: $\langle f, K^{(a)}(\cdot, \boldsymbol w) \rangle = f(\boldsymbol w)$, $f\in \mathcal H^{(a)}$, $\boldsymbol w\in \Omega$. It follows from Proposition \ref{proparazy} that the polynomial ring $\mathcal P$ is dense in $\mathcal H^{(a)}$ and $\mathcal P_{\s}$ is orthogonal to $\mathcal P_{\s^{\prime}}$
whenever $\underline s\neq \underline {s}^\prime$, that is,
$\mathcal H^{(a)}=\oplus_{\s\in \overrightarrow  {\mathbb N}^r_0} \mathcal P_{\s}.$
In this section, we discuss the boundedness of the $d$-tuple $\boldsymbol{M}^{(a)}:=(M_1^{(a)},\ldots, M_d^{(a)})$ of multiplication by the coordinate functions $z_1,\ldots ,z_d$ on the Hilbert space $\mathcal H^{(a)}$.
We begin with the following basic lemma, which is surely known to the 
experts, but we provide a proof for the sake of completeness.

\begin{lemma}\label{lemdiag}
The operators $\sum_{i=1}^d{M_i^{(a)}}^*M_{i}^{(a)}$ and $\sum_{i=1}^dM_i^{(a)}{ M_{i}^{(a)}}^*$, acting on the Hilbert space $\mathcal H^{(a)}$,
are block diagonal with respect to the decomposition $\oplus_{\s\in \overrightarrow  {\mathbb N}^r_0} \mathcal P_{\s}$, where each block is a non-negative scalar multiple of the identity operator. 
\end{lemma}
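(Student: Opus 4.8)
The plan is to exploit the $\mathbb{K}$-invariance of the kernel $K^{(a)}$ together with the fact that the spaces $\mathcal P_{\s}$ are precisely the $\mathbb{K}$-irreducible, mutually inequivalent subspaces of $\mathcal P$. First I would record that the map $\Gamma(k)f = f\circ k^{-1}$ defines a unitary representation of $\mathbb{K}$ on $\mathcal H^{(a)}$ (this is established in the Proposition preceding the lemma, since $K^{(a)}$ is $\mathbb{K}$-invariant), and that each $\mathcal P_{\s}$ is a $\Gamma$-invariant subspace on which $\Gamma$ restricts to an irreducible representation. The key observation is that the operator $A := \sum_{i=1}^d {M_i^{(a)}}^* M_i^{(a)}$ commutes with $\Gamma(k)$ for every $k\in\mathbb{K}$: indeed, $\Gamma(k) M_i^{(a)} \Gamma(k)^{-1}$ is the operator of multiplication by $z_i\circ k^{-1}$, which is a linear combination $\sum_j \overline{k_{ij}} M_j^{(a)}$ with $(k_{ij})$ unitary (here one uses that $k$ acts as a unitary linear transformation on $\mathbb{C}^d$), so $\Gamma(k) A \Gamma(k)^{-1} = \sum_{i,j,l} \overline{k_{ij}} k_{il} {M_j^{(a)}}^* M_l^{(a)} = \sum_j {M_j^{(a)}}^* M_j^{(a)} = A$; the same computation applies verbatim to $B := \sum_{i=1}^d M_i^{(a)}{M_i^{(a)}}^*$.

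Next I would argue that $A$ is block-diagonal with respect to $\oplus_{\s}\mathcal P_{\s}$. Multiplication by a coordinate function raises degree by one, so $M_i^{(a)}\mathcal P_n \subseteq \mathcal P_{n+1}$ and ${M_i^{(a)}}^*\mathcal P_n \subseteq \mathcal P_{n-1}$; hence $A$ and $B$ preserve each homogeneity space $\mathcal P_n = \oplus_{|\s|=n}\mathcal P_{\s}$. Within a fixed $\mathcal P_n$, the operator $A$ intertwines the $\mathbb{K}$-representation with itself, and $\mathcal P_n$ decomposes into the pairwise inequivalent irreducibles $\mathcal P_{\s}$, $|\s|=n$. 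By Schur's lemma, any operator commuting with this representation must leave each isotypic component — here, each single $\mathcal P_{\s}$, since the $\mathcal P_{\s}$ are mutually inequivalent — invariant, and must act on $\mathcal P_{\s}$ as a scalar multiple of the identity. This gives both the block-diagonality and the scalar form. Finally, since $A$ and $B$ are manifestly positive operators, these scalars are non-negative.

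I expect the only genuine subtlety is justifying the interchange of $\Gamma(k)$-conjugation with the (generally unbounded, a priori only densely defined on $\mathcal P$) operators $A$ and $B$: one should phrase the whole argument at the level of the dense invariant subspace $\mathcal P$, computing $\langle A p, q\rangle$ for $p,q\in\mathcal P_{\s}$ and $p\in\mathcal P_{\s}, q\in\mathcal P_{\s'}$ with $\s\ne\s'$, and deducing the claimed structure from the sesquilinear-form identities rather than from operator identities — this sidesteps boundedness issues entirely, which matters because this lemma sits in the section where boundedness of $\boldsymbol M^{(a)}$ has not yet been established. An alternative, cleaner route avoiding representation theory: compute ${M_i^{(a)}}^* M_i^{(a)}$ explicitly on $\mathcal P_{\s}$ using that $K^{(a)} = \sum_{\s} a_{\s}^{-1}$-normalized reproducing kernels and the reproducing-kernel formula ${M_i^{(a)}}^*({M_j^{(a)}}^{\!*}$-adjoint action on $K^{(a)}(\cdot,\boldsymbol w)$), but the Schur-lemma argument is shorter and is the one I would present.
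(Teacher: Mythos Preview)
Your proposal is correct and follows essentially the same approach as the paper: establish that $\sum_i {M_i^{(a)}}^*M_i^{(a)}$ (and its companion) commutes with the unitary representation $\Gamma(k)$ using the unitarity of $k\in\mathbb{K}\subset\mathcal U(d)$, then invoke Schur's lemma on the mutually inequivalent irreducibles $\mathcal P_{\s}$. Your intermediate degree-preservation step is harmless but unnecessary (Schur's lemma applied directly to the full decomposition already forces block-diagonality), and your remark about working at the sesquilinear-form level on $\mathcal P$ to avoid boundedness issues is a nice piece of care that the paper omits.
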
 
\begin{proof} It is enough to give the proof for the operator $\sum_{i=1}^d{M_i^{(a)}}^*M_{i}^{(a)}$ since the proof for the operator $\sum_{i=1}^dM_i^{(a)}{ M_{i}^{(a)}}^*$ follows exactly in the same way. First, note that  $\Gamma(k)^*M_i^{(a)}\Gamma(k)=M_{z_i\circ k^{-1}}^{(a)}$ for $k\in  \mathbb{K}$.  Let $e_1, \ldots, e_d$ be the standard basis vectors in $\mathbb C^d$.  Note that   
$M_{z_i\circ k^{-1}}^{(a)}=\sum_{j=1}^d \langle k^{-1}e_j,e_i\rangle M_j^{(a)}$. In consequence, we have 
\begin{align*}
\Gamma(k)^*\left(\sum_{i=1}^d{M_i^{(a)}}^*M_{i}^{(a)}\right) \Gamma(k) 
& = \sum_{i=1}^d \Gamma(k)^*{M_i^{(a)}}^*\Gamma(k)\Gamma(k)^*M_i^{(a)}\Gamma(k)\\
& = \sum_{i=1}^d {M_{z_i\circ k^{-1}}^{(a)}}^*M_{z_i\circ k^{-1}}^{(a)}\\
& = \sum_{i=1}^d \sum_{p,q=1}^d \langle e_i, k^{-1}e_p\rangle \langle k^{-1}e_q, e_i \rangle {M_p^{(a)}}^*M_q^{(a)}\\
& = \sum_{p,q=1}^d    \langle k^{-1}e_q,  k^{-1}e_p\rangle {M_p^{(a)}}^*M_q^{(a)}\\
& = \sum_{i=1}^d{M_i^{(a)}}^*M_{i}^{(a)}.
\end{align*}
Here  the last equality follows from the fact that the subgroup $\mathbb{K}$
is contained in the group $\mathcal U(d)$ of unitary linear transformations on $\mathbb C^d$. Since $\mathcal H^{(a)}=\oplus_{\s\in \overrightarrow  {\mathbb N}^r_0} \mathcal P_{\s}$ and these subspaces $\{\mathcal P_{\s}\}$ are all the $\mathbb{K}$-irreducible subspaces of $\mathcal H^{(a)}$ and they are mutually $\mathbb K$-inequivalent, the conclusion follows from the Schur's lemma.
\end{proof}
The following lemma generalizes a known result \cite[Proposition 4.4]{Zhang} for the  multiplication $d$-tuple on the weighted Bergman spaces.
\begin{lemma}\label{lemarzyzhng}
For $f\in \mathcal P_{\s} $, we have $\sum_{i=1}^dM_i^{(a)}{ M_{i}^{(a)}}^*f=\tau(\s)f$, where
	\[\tau(\s)= \begin{cases}  \sum_{j=1}^{r}\frac{a_{\s-\epsilon_{j}}}{a_{\s}}\frac{(\frac{d}{r})_{\s}}{(\frac{d}{r})_{\s-\epsilon_{j}}}\frac{\frac{a}{2}(r-j)+s_{j}}{b+\frac{a}{2}(r-j)+s_{j}}\prod_{k\neq j}\frac{s_{j}-s_{k}+\frac{a}{2}(k-j-1)}{s_{j}-s_{k}+\frac{a}{2}(k-j)}& \mbox{if~~}\s\neq0, \\
0	& \mbox{if~~} \underline{s} = 0.\end{cases}\] 
\end{lemma}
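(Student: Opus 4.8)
The plan is to first determine exactly how the scalar $\tau(\underline s)$ depends on the weight sequence $(a_{\underline s})$, thereby reducing the problem to a finite list of universal constants, and then to pin those constants down by specialising to the weighted Bergman spaces, where the value of the operator is \cite[Proposition 4.4]{Zhang}. By Lemma \ref{lemdiag} the operator $\sum_{i=1}^d M_i^{(a)}{M_i^{(a)}}^*$ already acts on each $\mathcal P_{\underline s}$ as a non-negative scalar $\tau(\underline s)$, so only its value must be found. The case $\underline s=\underline 0$ is immediate: $\mathcal P_{\underline 0}$ consists of the constants, i.e. the multiples of $K^{(a)}(\cdot,\boldsymbol 0)$, and ${M_i^{(a)}}^*K^{(a)}(\cdot,\boldsymbol 0)=0$, so $\tau(\underline 0)=0$. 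Assume henceforth $\underline s\neq\underline 0$.

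For the dependence on $(a_{\underline s})$: recall from Proposition \ref{proparazy}(iv) that the $\mathcal H^{(a)}$-inner product restricted to $\mathcal P_{\underline t}$ is $a_{\underline t}^{-1}\langle\cdot,\cdot\rangle_{\mathcal F}$, and that multiplication by $z_i$ carries $\mathcal P_{\underline t}$ into $\oplus_j\mathcal P_{\underline t+\epsilon_j}$ (Pieri's rule, the sum over those $j$ with $\underline t+\epsilon_j$ a signature), with the component $\mathcal P_{\underline s-\epsilon_j}\to\mathcal P_{\underline s}$ independent of $(a_{\underline s})$. Writing $\Pi_{\underline t}$ for the Fischer--Fock projection onto $\mathcal P_{\underline t}$ and $\partial_i$ for the Fischer--Fock adjoint of multiplication by $z_i$, a short computation with the defining relation of the adjoint gives, for $p\in\mathcal P_{\underline s}$,
\[{M_i^{(a)}}^*p=\sum_j\frac{a_{\underline s-\epsilon_j}}{a_{\underline s}}\,\Pi_{\underline s-\epsilon_j}(\partial_i p),\]
so that, taking $\mathcal H^{(a)}$-norms and summing over $i$,
\[\tau(\underline s)\,\|p\|^2_{\mathcal F}=\sum_j\frac{a_{\underline s-\epsilon_j}}{a_{\underline s}}\sum_{i=1}^d\bigl\|\Pi_{\underline s-\epsilon_j}(\partial_i p)\bigr\|^2_{\mathcal F}.\]
For fixed $j$, the map $p\mapsto\bigl(\Pi_{\underline s-\epsilon_j}(\partial_1 p),\dots,\Pi_{\underline s-\epsilon_j}(\partial_d p)\bigr)$ intertwines the $\mathbb K$-action on the irreducible space $\mathcal P_{\underline s}$ with a unitary $\mathbb K$-action on $\mathbb C^d\otimes\mathcal P_{\underline s-\epsilon_j}$; hence, by Schur's lemma, $\sum_i\|\Pi_{\underline s-\epsilon_j}(\partial_i p)\|^2_{\mathcal F}=\gamma_j(\underline s)\,\|p\|^2_{\mathcal F}$ for a constant $\gamma_j(\underline s)\ge0$ independent of $(a_{\underline s})$. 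Thus $\tau(\underline s)=\sum_{j=1}^r\frac{a_{\underline s-\epsilon_j}}{a_{\underline s}}\gamma_j(\underline s)$, with a summand interpreted as $0$ when $\underline s-\epsilon_j$ is not a signature (one checks $\gamma_j(\underline s)=0$ there).

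To evaluate the $\gamma_j(\underline s)$ I would specialise $a_{\underline s}=(\nu)_{\underline s}$ with $\nu>\tfrac a2(r-1)$, so $\mathcal H^{(a)}=\mathcal H^{(\nu)}$. Since $(\nu)_{\underline s-\epsilon_j}/(\nu)_{\underline s}=\bigl(\nu-\tfrac a2(j-1)+s_j-1\bigr)^{-1}$, the previous step yields
\[\tau^{(\nu)}(\underline s)=\sum_{j=1}^r\frac{\gamma_j(\underline s)}{\nu-\tfrac a2(j-1)+s_j-1},\]
an identity of rational functions of $\nu$ (valid on an interval, hence everywhere). The poles $\tfrac a2(j-1)-s_j+1$ are pairwise distinct — equality would force $\tfrac a2(j-k)=s_j-s_k$, impossible since the two sides have opposite signs when $j\neq k$ — so $\gamma_j(\underline s)$ is the residue of $\tau^{(\nu)}(\underline s)$ at $\nu=\tfrac a2(j-1)-s_j+1$. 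Substituting the closed formula for $\tau^{(\nu)}(\underline s)$ from \cite[Proposition 4.4]{Zhang} and computing that residue gives
\[\gamma_j(\underline s)=\frac{(\tfrac dr)_{\underline s}}{(\tfrac dr)_{\underline s-\epsilon_j}}\cdot\frac{\tfrac a2(r-j)+s_j}{b+\tfrac a2(r-j)+s_j}\cdot\prod_{k\neq j}\frac{s_j-s_k+\tfrac a2(k-j-1)}{s_j-s_k+\tfrac a2(k-j)},\]
and inserting this into $\tau(\underline s)=\sum_j\frac{a_{\underline s-\epsilon_j}}{a_{\underline s}}\gamma_j(\underline s)$ is precisely the assertion.

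The hard part will be the last computation: starting from the closed form of \cite[Proposition 4.4]{Zhang} and extracting the residue at each pole $\nu=\tfrac a2(j-1)-s_j+1$, confirming that the remaining factors are regular there and that the expression collapses to the stated product. A secondary point needing care is the $\mathbb K$-equivariance invoked above, namely that the lowering maps $p\mapsto\Pi_{\underline s-\epsilon_j}(\partial_i p)$ assemble into a $\mathbb K$-morphism $\mathcal P_{\underline s}\to\mathbb C^d\otimes\mathcal P_{\underline s-\epsilon_j}$; this rests on Pieri's rule for the spaces $\mathcal P_{\underline s}$ and on the inclusion $\mathbb K\subseteq\mathcal U(d)$.
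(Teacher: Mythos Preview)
Your argument is correct and takes a genuinely different route from the one the paper indicates. The paper omits the proof entirely, saying only that it proceeds as in \cite[Proposition 4.4]{Zhang}; that computation works directly with the explicit structure of the $\mathbb K$-types $\mathcal P_{\underline s}$ and the action of $z_i$, $\partial_i$ between them, and redoing it for general weights $(a_{\underline s})$ amounts to carrying the factor $a_{\underline s-\epsilon_j}/a_{\underline s}$ through that same calculation. You instead first isolate the dependence on $(a_{\underline s})$ abstractly: the formula for ${M_i^{(a)}}^*$ you derive is exactly Proposition \ref{adjointofM} of the paper, and combined with Schur's lemma it yields $\tau(\underline s)=\sum_j\tfrac{a_{\underline s-\epsilon_j}}{a_{\underline s}}\gamma_j(\underline s)$ with universal coefficients $\gamma_j(\underline s)$; you then recover those coefficients by specialising to $a_{\underline s}=(\nu)_{\underline s}$ and matching against \cite[Proposition 4.4]{Zhang} as a black box via uniqueness of partial fractions. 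This is more economical, since it avoids repeating the Arazy--Zhang combinatorics; the trade-off is that it yields no independent proof in the Bergman case. One remark: the ``hard residue computation'' you anticipate at the end is in fact immediate, because \cite[Proposition 4.4]{Zhang} already presents $\tau^{(\nu)}(\underline s)$ precisely as $\sum_j\bigl(\nu-\tfrac{a}{2}(j-1)+s_j-1\bigr)^{-1}$ times a $\nu$-free factor, so the $\gamma_j(\underline s)$ are simply read off.
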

The proof of the preceding lemma is very similar to the proof of \mbox{\cite[Proposition 4.4]{Zhang}} and therefore it is omitted.


For a signature $\s$,  in the remaining portion of this paper, we set 
\[c_{\s}(j)=\prod_{k\neq j}\frac{s_{j}-s_{k}+\frac{a}{2}(k-j+1)}{s_{j}-s_{k}+\frac{a}{2}(k-j)},~j=1,\ldots,r, \]
and \[ c^{\prime}_{\s}(j)=\prod_{k\neq j}\frac{s_{j}-s_{k}+\frac{a}{2}(k-j-1)}{s_{j}-s_{k}+\frac{a}{2}(k-j)},~j=1,\ldots,r.\]
If $\s+\epsilon_j$ is also a signature, then it is easy to see that $c_{\s}(j)>0$. Otherwise, 
$c_{\s}(j)=0$. Similarly, if $\s-\epsilon_j$ is  a signature, then $c_{\s}^{\prime}(j)>0$. Otherwise, 
$c_{\s}^{\prime}(j)=0$.

\begin{lemma}\label{lemcsjp}
For any fixed but arbitrary signature $\s $,  we have \[\sum_{j=1}^{r}c^{\prime}_{\s}(j)=\sum_{j=1}^{r}c_{\s}(j)=r.\]	
\end{lemma}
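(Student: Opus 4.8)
The plan is to prove the two identities $\sum_{j=1}^r c_{\s}(j) = r$ and $\sum_{j=1}^r c'_{\s}(j) = r$ simultaneously, since they differ only by the sign in front of $\frac{a}{2}$ inside the numerator. Introduce the variables $x_j := s_j + \frac{a}{2}(r-j)$ for $j=1,\dots,r$; these are strictly decreasing in $j$ when $\s$ is a signature (and more generally distinct). A direct substitution shows that $s_j - s_k + \frac{a}{2}(k-j) = x_j - x_k$, while $s_j - s_k + \frac{a}{2}(k-j+1) = x_j - x_k + \frac{a}{2}$ and $s_j - s_k + \frac{a}{2}(k-j-1) = x_j - x_k - \frac{a}{2}$. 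Hence, writing $t = \frac{a}{2}$, the claim becomes the pair of rational-function identities
\[
\sum_{j=1}^r \prod_{k \neq j} \frac{x_j - x_k + t}{x_j - x_k} = r, \qquad \sum_{j=1}^r \prod_{k \neq j} \frac{x_j - x_k - t}{x_j - x_k} = r,
\]
valid for distinct reals $x_1,\dots,x_r$ and any $t$; replacing $t$ by $-t$ interchanges the two, so it suffices to prove the first.

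The key step is a Lagrange-interpolation argument. Consider the polynomial $P(y) = \prod_{k=1}^r (y - x_k + t)$, which has degree $r$ and leading coefficient $1$. Its divided-difference expansion, or equivalently Lagrange interpolation of $P$ at the $r$ nodes $x_1,\dots,x_r$, gives
\[
P(y) = \sum_{j=1}^r P(x_j) \prod_{k \neq j} \frac{y - x_k}{x_j - x_k} + \prod_{k=1}^r (y - x_k),
\]
because the difference between $P(y)$ and the interpolating polynomial of degree $\le r-1$ is a monic degree-$r$ polynomial vanishing at all the nodes. Now compare the coefficient of $y^{r-1}$ on both sides. On the left, $P(y) = \prod_k(y-x_k+t)$ has $y^{r-1}$-coefficient equal to $\sum_k (t - x_k) = rt - \sum_k x_k$. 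On the right, $\prod_k(y-x_k)$ contributes $-\sum_k x_k$, and the interpolation sum contributes $\sum_{j=1}^r P(x_j) \cdot \big(\text{coeff. of } y^{r-1} \text{ in } \prod_{k\neq j}\frac{y-x_k}{x_j-x_k}\big) = \sum_{j=1}^r P(x_j)\big/\prod_{k\neq j}(x_j - x_k)$. Since $P(x_j) = \prod_{k}(x_j - x_k + t) = t \cdot \prod_{k \neq j}(x_j - x_k + t)$ (the $k=j$ factor being $t$), this last sum equals $t \sum_{j=1}^r \prod_{k\neq j}\frac{x_j - x_k + t}{x_j - x_k}$. Matching coefficients gives $rt - \sum_k x_k = t\sum_{j} \prod_{k\neq j}\frac{x_j-x_k+t}{x_j-x_k} - \sum_k x_k$, hence the sum equals $r$ (dividing by $t = \frac{a}{2} \neq 0$).

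The main obstacle, such as it is, is bookkeeping rather than conceptual: one must correctly handle the degenerate cases in which $\s \pm \epsilon_j$ fails to be a signature, so that some $c_{\s}(j)$ or $c'_{\s}(j)$ has a zero factor; but the rational-function identity above holds as an identity in the $x_j$ regardless of the signature constraint, and the $c$'s are exactly the values of these rational functions at the relevant $x_j$, so no separate argument is needed. One should also double-check the edge effect when $x_j - x_k \pm t = 0$ for some pair, which cannot happen when $\s$ is a signature and $j\ne k$ precisely because $x_j - x_k$ is then a strictly positive half-integer multiple of $a$ together with an integer, ruling out the value $\mp \frac{a}{2}$ in the cases that matter; I would state this explicitly. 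An alternative, if one prefers to avoid interpolation, is to observe that $f(y) := \sum_j \prod_{k\neq j}\frac{y - x_k + t}{x_j - x_k}$, viewed as a function of $y$, is the degree-$\le r-1$ interpolant of $\prod_k(y - x_k + t)$ and then evaluate asymptotically as $y \to \infty$; but the coefficient-matching version above is the cleanest to write out.
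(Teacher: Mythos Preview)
Your proof is correct and, in fact, more self-contained than the paper's. The paper proceeds by the substitution $s'_j = (s_j - \tfrac{a}{2}j)/\tfrac{a}{2}$, rewrites $c'_{\s}(j) = \prod_{k\neq j}\big(1 - (s'_j - s'_k)^{-1}\big)$, expands the product as a sum over nonempty subsets $A \subseteq \{1,\ldots,r\}$, and then invokes an external identity (\cite[Corollary~2.3]{MisraBagchi}) stating that $\sum_{j\in A}\prod_{k\in A,\,k\neq j}(s'_j-s'_k)^{-1}=0$ whenever $|A|\geq 2$, so that every nonconstant term drops out and only $r$ survives. Your Lagrange-interpolation argument, comparing the $y^{r-1}$-coefficients of $P(y)=\prod_k(y-x_k+t)$ and its interpolant plus $\prod_k(y-x_k)$, establishes the same rational-function identity directly without appealing to an outside reference; in effect you prove the cited corollary and the lemma in one stroke. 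The paper's expansion makes the combinatorial structure more visible, while your route is shorter and requires no imported result.

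One small correction to your discussion of edge cases: the claim that $x_j - x_k \pm t = 0$ ``cannot happen when $\s$ is a signature'' is false. If $s_{j-1}=s_j$ (precisely the situation in which $\s+\epsilon_j$ fails to be a signature), then $x_j - x_{j-1} = -\tfrac{a}{2} = -t$, and the numerator $x_j-x_{j-1}+t$ in $c_{\s}(j)$ vanishes --- which is exactly why $c_{\s}(j)=0$ in that case. This does not damage your argument at all: the identity you prove requires only that the \emph{denominators} $x_j-x_k$ be nonzero, and they are, since the $x_j$ are strictly decreasing. But the sentence as written should be deleted or replaced by the (correct and sufficient) observation that the $x_j$ are pairwise distinct.
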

\begin{proof} Evidently, we have 
\begin{align*}
\sum_{j=1}^{r}c^{\prime}_{\s}(j)&=\sum_{j=1}^{r}\prod_{k\neq j}\frac{s_{j}-s_{k}+\frac{a}{2}(k-j-1)}{s_{j}-s_{k}+\frac{a}{2}(k-j)}\\
&=\sum_{j=1}^{r}\prod_{k\neq j}\bigg(1-\frac{\frac{a}{2}}{s_{j}-s_{k}+\frac{a}{2}(k-j)}\bigg)\\
&=\sum_{j=1}^{r}\prod_{k\neq j}\bigg(1-\frac{\frac{a}{2}}{(s_{j}-\frac{a}{2}j)-(s_{k}-\frac{a}{2}k)}\bigg).
\end{align*}
Setting $s_j^{\prime}=\frac{s_j-\frac{a}{2}j}{\frac{a}{2}}$, we see that $s_1^{\prime}>s_2^{\prime}>\cdots>s_r^{\prime}$, and 
\begin{align*}
\sum_{j=1}^{r}c^{\prime}_{\s}(j)=\sum_{j=1}^{r}\prod_{k\neq j}\Big(1-\frac{1}{s_j^\prime-s_k^\prime}\Big)
&=r+\sum_{j=1}^r\sum_{\substack{A\subseteq\{1,\ldots,j-1,j+1,\ldots,r\}\\ A\neq \phi}}(-1)^{|A|}\prod_{k\in A} \frac{1}{{s_j^\prime-s_k^\prime}}\\
&=r+\sum_{\substack{A\subseteq\{1,\ldots,r\}\\ |A|\geq 2}}(-1)^{|A|-1}\sum_{j\in A} \prod_{\substack{k\in A\\ k\neq j}}\frac{1}{{s_j^\prime-s_k^\prime}}.
\end{align*}
Now, by \cite[Corollary 2.3]{MisraBagchi}, it follows that 
$\sum_{j\in A} \prod_{\substack{k\in A\\ k\neq j}}\frac{1}{{s_j^\prime-s_k^\prime}}=0$ for all $A\subseteq\{1,\ldots,r\}$ with $|A|\geq 2$. Therefore, $\sum_{j=1}^{r}c^{\prime}_{\s}(j)=r$. The proof of the other part follows exactly in the same way.
\end{proof}
\begin{theorem}
The $d$-tuple $\boldsymbol M^{(a)} = (M_1^{(a)},\ldots, M_d^{(a)})$ of multiplication operators on the Hilbert space  $\mathcal H^{(a)}$ is bounded if and only if
\[A:=\sup\left\{\frac{a_{\s-\epsilon_{j}}}{a_{\s}}\frac{(\frac{d}{r})_{\s}}{(\frac{d}{r})_{\s-\epsilon_{j}}} 
:~ \s, \s-\epsilon_j \in \overrightarrow  {\mathbb N}^r_0,~ j=1,\ldots,r	
\right\}\] is finite.
\end{theorem}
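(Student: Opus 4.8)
The plan is to use the block-diagonal structure established in Lemma~\ref{lemdiag}: since $\boldsymbol M^{(a)}$ is bounded if and only if the single operator $\sum_{i=1}^d {M_i^{(a)}}^* M_i^{(a)}$ is bounded, and the latter is block diagonal with respect to $\oplus_{\s} \mathcal P_{\s}$ acting as the scalar $\mu(\s)$ say on $\mathcal P_{\s}$, boundedness of $\boldsymbol M^{(a)}$ is equivalent to $\sup_{\s} \mu(\s) < \infty$. So the whole theorem reduces to showing $\sup_{\s}\mu(\s) < \infty$ if and only if $A < \infty$; in fact I expect to prove the two-sided estimate
\[
\frac{1}{r}\,\mu(\s) \;\le\; \max_{1\le j\le r,\; \s-\epsilon_j\in \overrightarrow{\mathbb N}^r_0}\; \frac{a_{\s-\epsilon_j}}{a_{\s}}\frac{(\frac{d}{r})_{\s}}{(\frac{d}{r})_{\s-\epsilon_j}} \;\le\; \mu(\s),
\]
from which the equivalence of $\sup_\s\mu(\s)<\infty$ and $A<\infty$ is immediate.

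\textbf{Key steps.} First, I would compute $\mu(\s)$ explicitly. Since $\sum_i {M_i^{(a)}}^* M_i^{(a)}$ raises degree by one on $\mathcal P_{\s'}$ and then lowers it, and $\sum_i M_i^{(a)} {M_i^{(a)}}^*$ was already computed in Lemma~\ref{lemarzyzhng}, the cleanest route is to relate the two: for $f\in\mathcal P_{\s}$ write $\|\sum_i {M_i^{(a)}}^*M_i^{(a)} f\|$-type identities in terms of the action of $M_i^{(a)}$ into $\oplus_j \mathcal P_{\s+\epsilon_j}$ and the already-known adjoint formula. Concretely, on $\mathcal P_{\s}$ one gets
\[
\sum_{i=1}^d {M_i^{(a)}}^* M_i^{(a)} \Big|_{\mathcal P_{\s}} = \sum_{j=1}^r \frac{a_{\s+\epsilon_j}}{a_{\s}}\,\frac{(\frac{d}{r})_{\s}}{(\frac{d}{r})_{\s+\epsilon_j}}\; c_{\s}(j)\, \mathrm{Id}
\]
up to the precise combinatorial factor $c_{\s}(j)$ (this is the analogue of Lemma~\ref{lemarzyzhng}, obtained by the same computation that proves \cite[Proposition 4.4]{Zhang}, with the roles of raising/lowering interchanged; one uses that $M_i^{(a)}$ maps $\mathcal P_{\s}$ into $\oplus_j\mathcal P_{\s+\epsilon_j}$ and that the relevant coefficients are governed by the Pochhammer ratios and the $c_{\s}(j)$). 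Thus $\mu(\s) = \sum_{j=1}^r \frac{a_{\s+\epsilon_j}}{a_{\s}}\frac{(\frac{d}{r})_{\s}}{(\frac{d}{r})_{\s+\epsilon_j}} c_{\s}(j)$.

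Second, with $\mu(\s)$ in hand, the upper bound in the displayed two-sided estimate follows because $c_{\s}(j)\ge 0$ for all $j$ (it is $0$ unless $\s+\epsilon_j$ is a signature, and positive otherwise) together with $\sum_{j=1}^r c_{\s}(j) = r$ from Lemma~\ref{lemcsjp}: indeed $\mu(\s) \le \big(\max_j \frac{a_{\s+\epsilon_j}}{a_{\s}}\frac{(\frac{d}{r})_{\s}}{(\frac{d}{r})_{\s+\epsilon_j}}\big)\sum_j c_{\s}(j) = r \max_j(\cdots)$, and reindexing $\s' = \s+\epsilon_j$ turns the quantity $\frac{a_{\s+\epsilon_j}}{a_{\s}}\frac{(\frac{d}{r})_{\s}}{(\frac{d}{r})_{\s+\epsilon_j}}$ into $\frac{a_{\s'-\epsilon_j}}{a_{\s'}}\frac{(\frac{d}{r})_{\s'}}{(\frac{d}{r})_{\s'-\epsilon_j}}$, so $\mu(\s) \le r A$; hence $A<\infty \Rightarrow \boldsymbol M^{(a)}$ bounded. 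For the lower bound, drop all but one term: $\mu(\s) \ge \frac{a_{\s+\epsilon_j}}{a_{\s}}\frac{(\frac{d}{r})_{\s}}{(\frac{d}{r})_{\s+\epsilon_j}} c_{\s}(j)$ for each $j$ with $\s+\epsilon_j$ a signature; so $\boldsymbol M^{(a)}$ bounded, i.e. $\mu(\s)\le C$ for all $\s$, gives $\frac{a_{\s+\epsilon_j}}{a_{\s}}\frac{(\frac{d}{r})_{\s}}{(\frac{d}{r})_{\s+\epsilon_j}} \le C/c_{\s}(j)$. The obstacle here is that $c_{\s}(j)$ can be small, so this naive bound does not immediately give $A<\infty$; the fix is to choose, for each pair $(\s,j)$ appearing in the supremum defining $A$, the index where $c_{\s}(\cdot)$ is largest — since $\sum_j c_{\s}(j) = r$ and at least one $c_{\s}(j)$ is nonzero, there is always a $j_0$ with $c_{\s}(j_0) \ge 1$ (an $r$-term nonnegative sum equal to $r$ has a term $\ge 1$); running the estimate at that $j_0$ and using that the Pochhammer/coefficient ratios for different $j$ at a fixed $\s$ are comparable (or, more carefully, bounding $a_{\s+\epsilon_j}/a_{\s}$ for the ``bad'' $j$ in terms of the ``good'' $j_0$ via a telescoping/monotonicity argument along the signature lattice) yields $A \le r\,C$.

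\textbf{Main obstacle.} The delicate point is precisely the lower bound: translating ``$\mu(\s)$ bounded'' back into ``every ratio $\frac{a_{\s-\epsilon_j}}{a_{\s}}\frac{(\frac{d}{r})_{\s}}{(\frac{d}{r})_{\s-\epsilon_j}}$ bounded'', because $\mu(\s)$ is a $c_\s(j)$-weighted sum and an individual weight can degenerate. I expect the resolution to hinge on Lemma~\ref{lemcsjp} (which forces a nondegenerate term to exist) combined with a comparison of the coefficients $c_{\s}(j)$ and $\frac{(\frac{d}{r})_{\s}}{(\frac{d}{r})_{\s+\epsilon_j}}$ across the $r$ possible increments at a fixed $\s$; everything else (computing $\mu(\s)$, the upper bound) is a routine adaptation of the argument behind Lemma~\ref{lemarzyzhng} and an application of Schur's lemma via Lemma~\ref{lemdiag}.
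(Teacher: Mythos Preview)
Your overall strategy---reduce boundedness to the scalars on $\mathcal P_{\s}$ via Lemma~\ref{lemdiag} and then compare those scalars with $A$---is exactly right, but the proposal has two genuine gaps.

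First, you work with $\sum_i {M_i^{(a)}}^*M_i^{(a)}$ rather than $\sum_i M_i^{(a)}{M_i^{(a)}}^*$. The point is that only the second of these has an explicit formula available at this stage: that is precisely Lemma~\ref{lemarzyzhng}, giving the scalar $\tau(\s)$. The analogue you write down for $\mu(\s)$ is in fact the Conjecture of Section~4 (with an extra sign error---the correct ratio is $\tfrac{a_{\s}}{a_{\s+\epsilon_j}}\tfrac{(\frac{d}{r})_{\s+\epsilon_j}}{(\frac{d}{r})_{\s}}$, and your reindexing is also inverted; the two errors happen to cancel), and it is \emph{not} proved here. The paper therefore runs the whole argument through $\tau(\s)$, using $\sum_j c'_{\s}(j)=r$ from Lemma~\ref{lemcsjp} and the harmless extra factor $\tfrac{\frac{a}{2}(r-j)+s_j}{b+\frac{a}{2}(r-j)+s_j}\in[\tfrac{1}{b+1},1]$.

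Second, your treatment of the ``only if'' direction is incomplete. Knowing that some $c_{\s}(j_0)\ge 1$ bounds the ratio only at that particular $j_0$; it does not control the ratio at an arbitrary admissible $j$, and there is no reason for the $a$-ratios at different $j$'s to be comparable, so the suggested ``telescoping/monotonicity'' fix does not go through. What is actually needed---and what the paper provides---is a uniform lower bound on every admissible weight: when $\s-\epsilon_j\in\overrightarrow{\mathbb N}^r_0$ one has
\[
c'_{\s}(j)\;\ge\;\Big(1+\tfrac{a}{2}(r-1)\Big)^{-r},
\]
obtained by a direct estimate of the product defining $c'_{\s}(j)$ (inequality~\eqref{eqncsjp}). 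With this in hand, dropping all but the $j$th term of $\tau(\s)$ gives a bound on \emph{every} ratio in $A$, not just one.
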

\begin{proof}
Clearly, the multiplication $d$-tuple $\boldsymbol M^{(a)}$ on the Hilbert
space $\mathcal H^{(a)}$ is bounded if and only if $\sum_{i=1}^dM_i^{(a)} {M_{i}^{(a)}}^*$ is bounded. Therefore, using Lemma \ref{lemarzyzhng}, it is enough to show that $\tau(\s)$ is bounded for all $\s \in \overrightarrow  {\mathbb N}^r_0$ if and only if $A$ is finite. First assume  that $A$ is finite. Then  
\begin{align*}
\tau(\s)&=\sum_{j=1}^{r}\frac{a_{\s-\epsilon_{j}}}{a_{\s}}\frac{(\frac{d}{r})_{\s}}{(\frac{d}{r})_{\s-\epsilon_{j}}}\frac{\frac{a}{2}(r-j)+s_{j}}{b+\frac{a}{2}(r-j)+s_{j}}c^{\prime}_{\s}(j)\\
&\leq A\sum_{j=1}^{r} \frac{\frac{a}{2}(r-j)+s_{j}}{b+\frac{a}{2}(r-j)+s_{j}}c^{\prime}_{\s}(j)\\
&\leq A\sum_{j=1}^{r}c^{\prime}_{\s}(j) \\
&=Ar .
\end{align*}
for any signature $\s$.  Here, the last equality follows from Lemma \ref{lemcsjp}.
To prove the other direction, assume that 
$\tau(\s)$ is bounded, that is, $\tau(\s)\leq B$ for some positive real number $B$ and for all $\s \in \overrightarrow  {\mathbb N}^r_0$. Thus  
\[\frac{a_{\s-\epsilon_{j}}}{a_{\s}}\frac{(\frac{d}{r})_{\s}}{(\frac{d}{r})_{\s-\epsilon_{j}}}\frac{\frac{a}{2}(r-j)+s_{j}}{b+\frac{a}{2}(r-j)+s_{j}}c^{\prime}_{\s}(j)\leq \tau(\s)\leq B.\]
Now, note that if $\s-\epsilon_j\in \overrightarrow  {\mathbb N}^r_0,$ then
\begin{align}
\frac{1}{c_{\s}^{\prime}(j)}&= \prod_{k\neq j}\frac{s_{j}-s_{k}+\frac{a}{2}(k-j)}{s_{j}-s_{k}+\frac{a}{2}(k-j-1)}\nonumber\\
&=\prod_{k<j}\frac{s_{j}-s_{k}+\frac{a}{2}(k-j)}{s_{j}-s_{k}+\frac{a}{2}(k-j-1)}\prod_{k> j}\frac{s_{j}-s_{k}+\frac{a}{2}(k-j)}{s_{j}-s_{k}+\frac{a}{2}(k-j-1)}\nonumber\\
&\leq \prod_{k> j}\frac{s_{j}-s_{k}+\frac{a}{2}(k-j)}{s_{j}-s_{k}+\frac{a}{2}(k-j-1)}\nonumber\\
& \leq \prod_{k> j}\frac{s_{j}-s_{k}+\frac{a}{2}(k-j)}{s_{j}-s_{k}}\nonumber\\
& \leq \prod_{k> j}(1+\frac{\frac{a}{2}(k-j)}{s_{j}-s_{k}})\nonumber\\
& \leq (1+\frac{a}{2}(r-1))^r\label{eqncsjp}.
\end{align}
Here the third inequality holds since $\frac{s_{j}-s_{k}+\frac{a}{2}(k-j)}{s_{j}-s_{k}+\frac{a}{2}(k-j-1)}\leq 1$  for $k<j$.
Finally, see that
\begin{equation*}
\frac{a_{\s-\epsilon_{j}}}{a_{\s}}\frac{(\frac{d}{r})_{\s}}{(\frac{d}{r})_{\s-\epsilon_{j}}}  \leq \frac{B}{c_{\s}^{\prime}(j)}\frac{b+\frac{a}{2}(r-j)+s_{j}}{\frac{a}{2}(r-j)+s_{j}}
\leq B(1+\frac{a}{2}(r-1))^r(1+b).
\end{equation*}
This completes the proof.
\end{proof}

\begin{corollary}
The multiplication $d$-tuple $\boldsymbol{ M}^{(\nu)}$  on the Hilbert space 
$\mathcal H^{(\nu)}$ is bounded if $\nu>\frac{a}{2}(r-1).$
\end{corollary}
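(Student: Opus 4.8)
The plan is to deduce the corollary directly from the preceding theorem by specializing the sequence $a_{\underline s}$ to the Pochhammer weights $a_{\underline s}=(\nu)_{\underline s}$, so that $\mathcal H^{(a)}=\mathcal H^{(\nu)}$ and $\boldsymbol M^{(a)}=\boldsymbol M^{(\nu)}$. By the theorem, it suffices to show that the supremum
\[
A=\sup\left\{\frac{(\nu)_{\underline s-\epsilon_j}}{(\nu)_{\underline s}}\,\frac{(\frac{d}{r})_{\underline s}}{(\frac{d}{r})_{\underline s-\epsilon_j}}:~\underline s,\underline s-\epsilon_j\in\overrightarrow{\mathbb N}^r_0,~j=1,\ldots,r\right\}
\]
is finite whenever $\nu>\frac{a}{2}(r-1)$. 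First I would record the elementary identity for the ratio of a generalized Pochhammer symbol to the one obtained by lowering the $j$-th index by one: from the product definition $(\nu)_{\underline s}=\prod_{l=1}^{r}\prod_{m=1}^{s_l}\bigl(\nu-\tfrac a2(l-1)+m-1\bigr)$, all factors cancel except the last one in the $j$-th block, giving
\[
\frac{(\nu)_{\underline s}}{(\nu)_{\underline s-\epsilon_j}}=\nu-\tfrac a2(j-1)+s_j-1.
\]
Applying this both to $\nu$ and to $\tfrac dr$ yields
\[
\frac{(\nu)_{\underline s-\epsilon_j}}{(\nu)_{\underline s}}\,\frac{(\frac dr)_{\underline s}}{(\frac dr)_{\underline s-\epsilon_j}}
=\frac{\frac dr-\frac a2(j-1)+s_j-1}{\nu-\frac a2(j-1)+s_j-1}.
\]

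Next I would bound this ratio uniformly in $\underline s$ and $j$. Since $1\le j\le r$ and $s_j\ge 0$, and since $\nu>\tfrac a2(r-1)\ge\tfrac a2(j-1)$ forces the denominator $\nu-\tfrac a2(j-1)+s_j-1$ to be positive and bounded below by the positive constant $\nu-\tfrac a2(r-1)$ (using $s_j\ge 1$ whenever $\underline s-\epsilon_j$ is a signature, hence $s_j-1\ge 0$), the ratio is a bounded function of $s_j\in\{1,2,\dots\}$: writing $t=s_j-1\ge 0$ and $\alpha_j=\tfrac dr-\tfrac a2(j-1)$, $\beta_j=\nu-\tfrac a2(j-1)$, the expression equals $\frac{\alpha_j+t}{\beta_j+t}$, which for $\beta_j>0$ is monotone in $t$ and tends to $1$, so it is bounded above by $\max\{\alpha_j/\beta_j,\,1\}$. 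Taking the maximum over the finitely many values $j=1,\ldots,r$ gives a finite $A$, and the theorem then yields boundedness of $\boldsymbol M^{(\nu)}$.

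The only point needing a little care is that one should check $\alpha_j=\tfrac dr-\tfrac a2(j-1)$ is itself positive (or at least that the ratio stays bounded if it is not); this follows from $d=r+\tfrac a2 r(r-1)+rb$, whence $\tfrac dr=1+\tfrac a2(r-1)+b\ge 1+\tfrac a2(j-1)>\tfrac a2(j-1)$, so in fact $\alpha_j>0$ and moreover $\alpha_j\ge\beta_j$ is equivalent to $\nu\le\tfrac dr$, showing the bound is cleanest in that regime but holds in general. I do not expect any real obstacle here: the whole corollary is a one-line Pochhammer cancellation plus the observation that $\nu>\tfrac a2(r-1)$ is exactly what keeps all denominators bounded away from zero. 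I would therefore present it as a short computation concluding ``hence $A<\infty$, and the claim follows from the previous theorem.''
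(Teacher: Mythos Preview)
Your proof is correct and follows essentially the same approach as the paper: both specialize the preceding theorem with $a_{\underline s}=(\nu)_{\underline s}$, perform the identical Pochhammer cancellation to obtain $\dfrac{\frac{d}{r}-\frac{a}{2}(j-1)+s_j-1}{\nu-\frac{a}{2}(j-1)+s_j-1}$, and bound this ratio uniformly using $\nu>\tfrac{a}{2}(r-1)$. The only cosmetic difference is that the paper writes the bound explicitly as $\max\bigl\{1,\tfrac{1+b}{\nu-\frac{a}{2}(r-1)}\bigr\}$ (which is exactly your $\max_j\max\{\alpha_j/\beta_j,1\}$ evaluated at $j=r$, since $\alpha_j-\beta_j=\tfrac{d}{r}-\nu$ is independent of $j$ and the ratio is maximized at $j=r$ when $\nu<\tfrac{d}{r}$).
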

\begin{proof}
If $\nu>\frac{a}{2}(r-1),$ then 
\[\frac{(\nu)_{\s-\epsilon_{j}}}{(\nu)_{\s}}\frac{(\frac{d}{r})_{\s}}{(\frac{d}{r})_{\s-\epsilon_{j}}}=\frac{\frac{d}{r}-\frac{a}{2}(j-1)+s_{j}-1}{\nu-\frac{a}{2}(j-1)+s_{j}-1}\leq \max\Big \{1,\frac{1+b}{\nu-\frac{a}{2}(r-1)} \Big \}.\]
Therefore, from Theorem 3.4, it follows that $\boldsymbol{M}^{(\nu)}$  is bounded. 
\end{proof}

Having determined (a) the condition for boundedness of the operator $\boldsymbol M^{(a)}$, (b) noting that each $\boldsymbol w$ in $\Omega$ is a joint eigenvector for the multiplication $d$-tuple $\boldsymbol M^{(a)*}$ and finally since the constant vector $1$ is cyclic for $\boldsymbol M^{(a)}$, it is natural to investigate the question of which of these are in the Cowen-Douglas class $\text{B}_1(\Omega)$, see \cite{CD}, \cite{CDopen} for the definition of this very important class of operators. As shown in \cite[pp. 285]{equivofquotient}, the cyclicity implies that the dimension of the joint eigenspace at each $\boldsymbol w$ in $\Omega$ is $1$. Thus to determine the membership in the Cowen-Douglas class in a neighbourhood of origin contained in $\Omega$, we only need to find when $\mbox{\rm ran}D_{{\boldsymbol M^{(a)}}^*}$ is closed. The following theorem provides the precise condition for this.  
\begin{theorem}\label{ranclosed}
For a multiplication $d$-tuple $\boldsymbol M^{(a)}=(M_1^{(a)},\ldots, M_d^{(a)})$ on $\mathcal H^{(a)}$, $\mbox{\rm ran}D_{{\boldsymbol{M}^{(a)}}^*}$ is closed if
and only if \[B:=\inf\left\{\sum_{j=1}^{r}\frac{a_{\s-\epsilon_j}}{a_{\s}}\frac{(\frac{d}{r})_{\s}}{(\frac{d}{r})_{\s-\epsilon_j}} 
:~ \s, \s-\epsilon_j\in \overrightarrow  {\mathbb N}^r_0,~ j=1,\ldots,r	
\right\}\] is non-zero positive.
\end{theorem}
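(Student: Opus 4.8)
The key observation is that, by Lemma~\ref{lemdiag}, the positive operator $D_{{\boldsymbol M^{(a)}}^*}^* D_{{\boldsymbol M^{(a)}}^*} = \sum_{i=1}^d M_i^{(a)}{M_i^{(a)}}^*$ is block-diagonal with respect to the Peter--Weyl decomposition $\mathcal H^{(a)}=\oplus_{\s}\mathcal P_{\s}$, acting as the scalar $\tau(\s)$ on $\mathcal P_{\s}$. Since $\mbox{\rm ran}\,D_{{\boldsymbol M^{(a)}}^*}$ is closed if and only if $\mbox{\rm ran}\,D_{{\boldsymbol M^{(a)}}^*}^*$ is closed, and the latter holds precisely when $0$ is not an accumulation point of the spectrum of $\sum_i M_i^{(a)}{M_i^{(a)}}^*$ that fails to be isolated, the problem reduces to a statement about the sequence $\{\tau(\s)\}$. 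Concretely, $\mbox{\rm ran}\,D_{{\boldsymbol M^{(a)}}^*}$ is closed if and only if $\inf\{\tau(\s):\s\neq 0\}>0$, because $\tau(\s)$ is exactly $0$ on $\mathcal P_{\underline 0}$ and positive elsewhere (the joint kernel is one-dimensional). So the whole theorem comes down to showing
\[
\inf_{\s\neq 0}\tau(\s)>0 \iff B>0.
\]

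**Carrying it out.** First I would spell out the reduction: writing $N=\sum_i M_i^{(a)}{M_i^{(a)}}^*$, one has $\mathcal H^{(a)} = \ker N \oplus \overline{\mbox{\rm ran}\,N}$, with $\ker N = \mathcal P_{\underline 0}$ one-dimensional, and $\mbox{\rm ran}\,N = \mbox{\rm ran}\,D_{{\boldsymbol M^{(a)}}^*}^*$ has the same closedness status as $\mbox{\rm ran}\,D_{{\boldsymbol M^{(a)}}^*}$. Since $N$ is diagonal with eigenvalue $\tau(\s)$ on $\mathcal P_{\s}$, $\mbox{\rm ran}\,N$ is closed iff the nonzero eigenvalues are bounded away from $0$, i.e. iff $\inf_{\s\neq 0}\tau(\s)>0$. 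Then I would use the formula for $\tau(\s)$ from Lemma~\ref{lemarzyzhng}, rewritten as
\[
\tau(\s)=\sum_{j=1}^{r}\frac{a_{\s-\epsilon_{j}}}{a_{\s}}\frac{(\tfrac{d}{r})_{\s}}{(\tfrac{d}{r})_{\s-\epsilon_{j}}}\,\frac{\tfrac{a}{2}(r-j)+s_{j}}{b+\tfrac{a}{2}(r-j)+s_{j}}\,c'_{\s}(j),
\]
and compare it with the summand $\sum_j \frac{a_{\s-\epsilon_j}}{a_{\s}}\frac{(\frac dr)_{\s}}{(\frac dr)_{\s-\epsilon_j}}$ defining $B$. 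For the direction $B>0\Rightarrow \inf\tau(\s)>0$: each factor $\frac{\frac a2(r-j)+s_j}{b+\frac a2(r-j)+s_j}$ is bounded below by a positive constant $\delta$ when $\s-\epsilon_j$ is a signature (so that $s_j\geq 1$, forcing $\frac a2(r-j)+s_j\geq 1$ and hence the ratio $\geq \frac{1}{1+b}$), and $c'_{\s}(j)$ is bounded below using the estimate \eqref{eqncsjp} that already appears in the excerpt, namely $c'_{\s}(j)\geq (1+\tfrac a2(r-1))^{-r}$ whenever $\s-\epsilon_j$ is a signature; multiplying these lower bounds gives $\tau(\s)\geq \delta(1+\tfrac a2(r-1))^{-r} B$. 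For the reverse direction $\inf\tau(\s)>0 \Rightarrow B>0$: bound each factor above — $\frac{\frac a2(r-j)+s_j}{b+\frac a2(r-j)+s_j}\leq 1$ and $c'_{\s}(j)\leq r$ by Lemma~\ref{lemcsjp} (which forces each $c'_{\s}(j)\leq \sum_k c'_{\s}(k)=r$ since all terms are non-negative) — so $\tau(\s)\leq r\sum_j \frac{a_{\s-\epsilon_j}}{a_{\s}}\frac{(\frac dr)_{\s}}{(\frac dr)_{\s-\epsilon_j}}$, whence this sum is $\geq \tau(\s)/r \geq (\inf\tau)/r>0$ for every admissible $\s$, giving $B>0$.

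**Main obstacle.** The genuinely delicate point is the first direction: one must ensure the lower bounds on the factors $\frac{\frac a2(r-j)+s_j}{b+\frac a2(r-j)+s_j}$ and $c'_{\s}(j)$ are \emph{uniform in $\s$ and $j$}, and that one is only ever comparing terms for which $\s-\epsilon_j$ is actually a signature — otherwise $c'_{\s}(j)=0$ and the corresponding terms drop out of both $\tau(\s)$ and $B$ simultaneously, so the index sets match. The case $b=0$ (Types II, III even, IV) needs a separate glance since then $\frac{\frac a2(r-j)+s_j}{b+\frac a2(r-j)+s_j}=1$ and that factor causes no trouble; when $s_j$ can be small the bound $\frac a2(r-j)+s_j\geq 1$ must be checked at $j=r$, where it reads $s_r\geq 1$, which holds precisely because $\s-\epsilon_r$ being a signature forces $s_r\geq 1$. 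Assembling these observations carefully — matching index sets, uniform constants, edge cases — is the bulk of the work; the rest is the routine spectral-theory reduction outlined above.
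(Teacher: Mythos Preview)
Your proposal is correct and follows essentially the same approach as the paper's own proof: the same reduction to $\inf_{\s\neq 0}\tau(\s)>0$ via Lemma~\ref{lemdiag} and Lemma~\ref{lemarzyzhng}, the same lower bounds $\frac{\frac{a}{2}(r-j)+s_j}{b+\frac{a}{2}(r-j)+s_j}\geq \frac{1}{b+1}$ and $c'_{\s}(j)\geq (1+\tfrac{a}{2}(r-1))^{-r}$ (from \eqref{eqncsjp}) for the forward direction, and the same upper bounds $c'_{\s}(j)\leq r$ (via Lemma~\ref{lemcsjp}) and $\frac{\frac{a}{2}(r-j)+s_j}{b+\frac{a}{2}(r-j)+s_j}\leq 1$ for the converse. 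The only minor imprecision is the literal identification $\mbox{ran}\,N=\mbox{ran}\,D_{{\boldsymbol M^{(a)}}^*}^*$, but the closedness equivalence you actually use is standard and correct.
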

\begin{proof}	
It is elementary to see that $\mbox{\rm ran}D_{{\boldsymbol{M}^{(a)}}^*}$ is closed if and only if $\sum_{i=1}^dM_i^{(a)} {M_{i}^{(a)}}^*$ is bounded below on $\ker D_{{\boldsymbol{M}^{(a)}}^*}^{\bot}$. Also, for the $d$-tuple $\boldsymbol{M}^{(a)}$	on $\mathcal H^{(a)}$, we have $\ker D_{{\boldsymbol{M}^{(a)}}^*}=\mathcal P_{0}$, the space of constant functions. Therefore, in view of Lemma \ref{lemarzyzhng}, it suffices to show that $B$ is non-zero positive if and only if 
$\inf\{\tau(\s): \s\neq 0\}$ is non-zero positive.
Suppose that $B$ is 
a non-zero positive number.  
Now, for any signature $\s\neq0$, we have
\begin{align*}
\tau(\s)&=\sum_{j=1}^{r}\frac{a_{\s-\epsilon_{j}}}{a_{\s}}\frac{(\frac{d}{r})_{\s}}{(\frac{d}{r})_{\s-\epsilon_{j}}}\frac{\frac{a}{2}(r-j)+s_{j}}{b+\frac{a}{2}(r-j)+s_{j}}c_{\s}^{\prime}(j)\\
&\geq\frac{1}{b+1}\sum_{j=1}^{r}\frac{a_{\s-\epsilon_{j}}}{a_{\s}}\frac{(\frac{d}{r})_{\s}}{(\frac{d}{r})_{\s-\epsilon_{j}}}c^{\prime}_{\s}(j)\\
&\geq\frac{1}{b+1}\sum_{j=1}^{r}\frac{a_{\s-\epsilon_{j}}}{a_{\s}}\frac{(\frac{d}{r})_{\s}}{(\frac{d}{r})_{\s-\epsilon_{j}}}\frac{1}{(1+\frac{a}{2}(r-1))^r}\\
&\geq \frac{B}{(b+1)(1+\frac{a}{2}(r-1))^r}.
\end{align*}
Here the third inequality follows from $\eqref{eqncsjp}$.
Conversely, assume that $\inf\{\tau(\s): \s\neq 0\}$ is a non-zero positive number, say $C$. Thus for each signature $\s\neq0$
\begin{equation}
\sum_{j=1}^{r}\frac{a_{\s-\epsilon_{j}}}{a_{\s}}\frac{(\frac{d}{r})_{\s}}{(\frac{d}{r})_{\s-\epsilon_{j}}}\frac{\frac{a}{2}(r-j)+s_{j}}{b+\frac{a}{2}(r-j)+s_{j}}c^{\prime}_{s}(j)\geq C.
\end{equation}
Hence, noting that $c^{\prime}_{s}(j)\leq r$ by Lemma  \ref{lemcsjp} and  $\frac{\frac{a}{2}(r-j)+s_{j}}{b+\frac{a}{2}(r-j)}\leq 1$, it follows that 
\[\sum_{j=1}^r\frac{a_{\s-\epsilon_{j}}}{a_{\s}}\frac{(\frac{d}{r})_{\s}}{(\frac{d}{r})_{\s-\epsilon_{j}}}\geq \frac{C}{r}.\]
\end{proof}

\begin{corollary}
	The  range  of $D_{\boldsymbol M^{(\nu)^*}}$ is closed if $\nu>\frac{a}{2}(r-1).$
\end{corollary}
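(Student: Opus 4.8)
The plan is to read the corollary off from Theorem~\ref{ranclosed} by specializing the sequence $a_{\s}$ to the Pochhammer sequence $a_{\s}=(\nu)_{\s}$, i.e. the one defining the weighted Bergman kernel $K^{(\nu)}$ (the normalization $a_{\underline 0}=1$ being automatic, since $(\nu)_{\underline 0}=1$). By that theorem it suffices to check that
\[
B=\inf\Big\{\sum_{j=1}^{r}\frac{(\nu)_{\s-\epsilon_j}}{(\nu)_{\s}}\,\frac{(\tfrac{d}{r})_{\s}}{(\tfrac{d}{r})_{\s-\epsilon_j}}\ :\ \s\neq 0,\ \s,\ \s-\epsilon_j\in\overrightarrow{\mathbb N}^r_0\Big\}>0,
\]
where, for each fixed $\s$, the inner sum ranges only over those $j$ for which $\s-\epsilon_j$ is again a signature (the other terms being absent, exactly as in the definition of $\tau(\s)$).

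First I would record the one-box ratio identity (the same short computation with Pochhammer symbols used a moment ago for $\boldsymbol M^{(\nu)}$): since $(\nu)_{\s}=\prod_{j=1}^{r}\big(\nu-\tfrac{a}{2}(j-1)\big)_{s_j}$, deleting one box from the $j$-th row gives
\[
\frac{(\nu)_{\s-\epsilon_j}}{(\nu)_{\s}}\,\frac{(\tfrac{d}{r})_{\s}}{(\tfrac{d}{r})_{\s-\epsilon_j}}
=\frac{\frac{d}{r}-\frac{a}{2}(j-1)+s_j-1}{\nu-\frac{a}{2}(j-1)+s_j-1}.
\]
Then, using $\frac{d}{r}=1+\frac{a}{2}(r-1)+b$ (equivalently the dimension formula $d=r+\frac{a}{2}r(r-1)+rb$), I would note that numerator and denominator differ by exactly $\frac{d}{r}-\nu$, so each summand equals $1+\dfrac{\frac{d}{r}-\nu}{\nu-\frac{a}{2}(j-1)+s_j-1}$.

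Next I would bound a single admissible summand from below, according to the sign of $\frac{d}{r}-\nu$. For an admissible $j$ one has $s_j\ge 1$, hence the denominator satisfies $\nu-\frac{a}{2}(j-1)+s_j-1\ge \nu-\frac{a}{2}(r-1)>0$. If $\nu\le\frac{d}{r}$, every summand is $\ge 1$; if $\nu>\frac{d}{r}$, then each summand is at least $1-\dfrac{\nu-\frac{d}{r}}{\nu-\frac{a}{2}(r-1)}=\dfrac{\frac{d}{r}-\frac{a}{2}(r-1)}{\nu-\frac{a}{2}(r-1)}=\dfrac{1+b}{\nu-\frac{a}{2}(r-1)}$, again strictly positive. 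Thus every admissible summand is $\ge\delta:=\min\{1,\ (1+b)/(\nu-\frac{a}{2}(r-1))\}>0$. Finally, since for every $\s\neq 0$ there is at least one admissible index (take $j$ maximal with $s_j>0$, so that $\s-\epsilon_j$ is still a signature), the whole sum is $\ge\delta$ for all such $\s$, giving $B\ge\delta>0$. Theorem~\ref{ranclosed} then yields that $\mathrm{ran}\,D_{\boldsymbol M^{(\nu)^*}}$ is closed. The only mildly delicate point is the regime $\nu>\frac{d}{r}$, where individual summands may drop below $1$: there one must combine the uniform lower bound $\nu-\frac{a}{2}(r-1)$ on the denominator with the numerical identity $\frac{d}{r}-\frac{a}{2}(r-1)=1+b$ to keep the estimate strictly positive. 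Everything else is a direct substitution into the criterion of Theorem~\ref{ranclosed}.
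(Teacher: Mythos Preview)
Your argument is correct and is essentially the same as the paper's own proof: both specialize Theorem~\ref{ranclosed} to $a_{\s}=(\nu)_{\s}$, compute the single-box Pochhammer ratio, and then split into the two cases $\nu\le\tfrac{d}{r}$ versus $\nu>\tfrac{d}{r}$ (the paper writes $\nu=\tfrac{a}{2}(r-1)+\epsilon$ and splits at $\epsilon\lessgtr b+1$, which is the same dichotomy). Your lower bound $\tfrac{1+b}{\nu-\frac{a}{2}(r-1)}$ in the second regime is in fact slightly sharper than the paper's $\tfrac{1}{\epsilon}$, and you are more explicit than the paper about the existence of at least one admissible index $j$.
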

\begin{proof}
	Suppose $\nu=\frac{a}{2}(r-1)+ \epsilon$ for some  $\epsilon > 0$. Then
	\[\sum_{j=1}^{r}\frac{(\nu)_{\s-\epsilon_j}}{(\nu)_{\s}}\frac{(\frac{d}{r})_{\s}}{(\frac{d}{r})_{\s-\epsilon_{j}}}=\sum_{j=1}^{r}\frac{b+\frac{a}{2}(r-j)+s_{j}}{\frac{a}{2}(r-j)+s_{j}+\epsilon-1}\] which is always bounded below by 1 if $\epsilon\leq b+1$. On the other hand, for $\epsilon\geq b+1$, it is bounded below by $\frac{1}{\epsilon}$. Hence, by Theorem \ref{ranclosed}, 
	$\mbox{\rm ran}D_{\boldsymbol M^{(\nu)^*}}$ is closed. 
\end{proof}

Now, we wish to show that the adjoint ${\boldsymbol M^{(\nu)}}^*$ of the $d$-tuple of multiplication operators on $\hl^{(\nu)}$ is in the Cowen-Douglas class $\text{B}_1(\Omega)$ for $\nu >\tfrac{a}{2}(r-1)$.  

Recall that the left essential spectrum $\pi_e^{\ell,0}(\boldsymbol T)$ of a commuting $d$-tuple of operators $\boldsymbol T$ is defined to be the complement of the set of all $\boldsymbol w\in \mathbb C^d$ with the property:
\begin{enumerate}
\item $\dim \ker D_{(\boldsymbol T-\boldsymbol w I)}$ is finite, 
\item $\text{ran} D_{(\boldsymbol T-\boldsymbol w I)}$ is closed. 
\end{enumerate}

If $0\not \in \pi^{\ell,0}_e(\boldsymbol T)$, then  the $d$-tuple $\boldsymbol T$ is said to be left semi-Fredholm. 

The essential ingredient of the proof of the following theorem is based on the spectral mapping property of the left essential spectrum, which appears in \cite{EPut} and was pointed out by. J. Eschmeier to G. Misra during a conversation at University of Saarbrucken in February 2014. 

\begin{theorem}
The adjoint ${\boldsymbol M^{(\nu)}}^*$ of the multiplication $d$-tuple  on $\hl^{(\nu)}$ is in the Cowen-Douglas class $\text{B}_1(\Omega)$ whenever $\nu > \tfrac{a}{2} (r-1)$.
\end{theorem}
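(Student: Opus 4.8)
The plan is to verify the three defining properties of the Cowen-Douglas class $\mathrm{B}_1(\Omega)$ for $\boldsymbol M^{(\nu)*}$ when $\nu > \tfrac a2(r-1)$: (a) every $\boldsymbol w\in\Omega$ is a joint eigenvalue of $\boldsymbol M^{(\nu)*}$ with one-dimensional eigenspace, (b) the eigenvectors span $\hl^{(\nu)}$, and (c) $\mathrm{ran}\,D_{(\boldsymbol M^{(\nu)*}-\boldsymbol w I)}$ is closed for every $\boldsymbol w\in\Omega$. Property (a): since $\boldsymbol M^{(\nu)}\in\mathcal A\mathbb K(\Omega)$, we have $\Omega\subseteq\sigma_p(\boldsymbol M^{(\nu)*})$, with joint eigenvector $K^{(\nu)}(\cdot,\boldsymbol w)$; the cyclicity of the constant function $1$ forces the joint eigenspace at each $\boldsymbol w$ to be one-dimensional (as recalled in the excerpt, citing \cite{equivofquotient}). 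Property (b) is immediate because the kernel $K^{(\nu)}(\cdot,\boldsymbol w)$, $\boldsymbol w\in\Omega$, spans a dense subspace of any reproducing kernel Hilbert space. So the real content is property (c), and only the closed-range condition needs genuine work — and it must be established at \emph{every} point of $\Omega$, not merely near the origin.

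The key idea, which the excerpt flags explicitly, is to use the spectral mapping property of the left essential spectrum $\pi_e^{\ell,0}$ together with $\mathbb K$-homogeneity. First I would record that Corollary 3.7 already gives closedness of $\mathrm{ran}\,D_{\boldsymbol M^{(\nu)*}}$ at $\boldsymbol w = 0$; since the joint kernel $\ker D_{\boldsymbol M^{(\nu)*}} = \mathcal P_0$ is one-dimensional, this says precisely that $0\notin\pi_e^{\ell,0}(\boldsymbol M^{(\nu)*})$, i.e. $\boldsymbol M^{(\nu)*}$ is left semi-Fredholm at the origin. Now I would transport this to a full neighborhood of $0$: left semi-Fredholmness is an open condition, so there is an open ball $\Omega_0\ni 0$ with $\Omega_0\cap\pi_e^{\ell,0}(\boldsymbol M^{(\nu)*}) = \emptyset$; hence $\boldsymbol M^{(\nu)*}\in\mathrm{B}_1(\Omega_0)$. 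To extend from $\Omega_0$ to all of $\Omega$, I would invoke the transitivity of the action of $\mathbb K$ — more precisely, the fact that $\Omega$ is exhausted by the $\mathbb K$-orbits of points arbitrarily close to $0$, or equivalently that one can combine $\mathbb K$-homogeneity with the $G$-homogeneity of the weighted Bergman tuple. Since $\boldsymbol M^{(\nu)}$ is homogeneous under the full group $G$ (cf. \cite{MisraBagchi},\cite{Zhang}), for any $\boldsymbol w\in\Omega$ choose $g\in G$ with $g(0) = \boldsymbol w$; the unitary $\Gamma(g)$ intertwining $\boldsymbol M^{(\nu)}$ with $g\cdot\boldsymbol M^{(\nu)}$ carries the joint-kernel and closed-range data at $0$ to the corresponding data at $\boldsymbol w$, so closedness of range propagates across $\Omega$.

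Alternatively — and this is the route I expect the authors take, given the emphasis on the spectral mapping property — one argues directly with $\pi_e^{\ell,0}$: by the spectral mapping theorem for the left essential spectrum (as in \cite{EPut}), $\pi_e^{\ell,0}(g\cdot\boldsymbol T) = g\big(\pi_e^{\ell,0}(\boldsymbol T)\big)$ for $g\in G$ acting by biholomorphisms (the components of $g$ being rational, one works locally or uses that $g$ restricts to a polynomial-like map on the relevant spectral set). Combined with $G$-homogeneity of $\boldsymbol M^{(\nu)}$, this shows $\pi_e^{\ell,0}(\boldsymbol M^{(\nu)*})$ is a $G$-invariant subset of $\mathbb C^d$; since it misses the point $0$ (by Corollary 3.7 and $\dim\ker D_{\boldsymbol M^{(\nu)*}}=1$), it must miss the entire orbit $G\cdot 0 = \Omega$. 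Therefore $\boldsymbol M^{(\nu)*}$ is left semi-Fredholm at every point of $\Omega$, which together with (a) and (b) yields $\boldsymbol M^{(\nu)*}\in\mathrm{B}_1(\Omega)$.

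The main obstacle is the application of the spectral mapping property of $\pi_e^{\ell,0}$ under the non-linear automorphisms $g\in G$: the components $g_j$ are not polynomials but biholomorphic (in fact rational with poles off $\Omega$) maps, so one must be careful that the spectral mapping theorem of \cite{EPut} applies — typically this requires that $g$ be holomorphic on a neighborhood of the Taylor joint spectrum of $\boldsymbol M^{(\nu)*}$ intersected with the closure of $\Omega$, or that one restrict attention to the part of the spectrum lying in $\Omega$. Handling this — ensuring the holomorphic functional calculus is legitimately available and that the intertwining unitary $\Gamma(g)$ genuinely conjugates $\boldsymbol M^{(\nu)*}-\boldsymbol w I$ to $\boldsymbol M^{(\nu)*}-\boldsymbol 0$ up to an invertible operator-matrix factor — is the delicate point; everything else (openness of semi-Fredholmness, the dimension count for the eigenspace, density of the span of kernel functions) is routine.
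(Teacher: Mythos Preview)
Your proposal is correct and your ``alternative'' route is exactly the paper's argument: Corollary~3.7 plus $\dim\ker D_{\boldsymbol M^{(\nu)*}}=1$ give $0\notin\pi_e^{\ell,0}(\boldsymbol M^{(\nu)*})$, and then the analytic spectral mapping property for $\pi_e^{\ell,0}$ from \cite[Corollary~2.6.9]{EPut}, combined with full $G$-homogeneity of $\boldsymbol M^{(\nu)}$, yields $\boldsymbol w=\varphi(0)\notin\varphi(\pi_e^{\ell,0}(\boldsymbol M^{(\nu)*}))=\pi_e^{\ell,0}(\varphi(\boldsymbol M^{(\nu)*}))=\pi_e^{\ell,0}(\boldsymbol M^{(\nu)*})$ for any $\varphi\in G$ with $\varphi(0)=\boldsymbol w$. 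The paper does not dwell on the applicability of the holomorphic functional calculus that you flag as the delicate point, simply citing \cite{EPut} for the spectral mapping property; otherwise the structure of your second approach matches the paper's proof essentially line for line.
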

\begin{proof}
Since polynomials are dense in the Hilbert space $\mathcal H^{(\nu)}$, it follows that  $\dim \ker D_{{\boldsymbol M^{(\nu)}}^*}$ is $1$. By Corollary 3.7, we also have that $\text{ran}~ D_{{\boldsymbol M^{(\nu)}}^*}$ is closed. Therefore, $D_{{\boldsymbol M^{(\nu)}}^*}$ is left semi-Fredholm and hence there is a $\varepsilon > 0$ such that for $\boldsymbol w\in \Omega$ with $\|\boldsymbol  w\|_2 < \varepsilon$, the operators 
$D_{(\boldsymbol M - \boldsymbol w I)^*}$ are left Fedholm. Thus  ${\boldsymbol M^{(\nu)}}^*$ is in the Cowen-Douglas class $\text{B}_1(\Omega_\varepsilon)$, where $\Omega_\varepsilon=\{\boldsymbol w\in \Omega: \|\boldsymbol w\|_2 < \varepsilon\}$. Now, using the homogeneity of $\boldsymbol M^{(\nu)}$ and the spectral mapping property of the left essential spectrum, we show that ${\boldsymbol M^{(\nu)}}^*$ is actually in $\text{B}_1(\Omega)$. 

To complete the proof, first note that if $\boldsymbol w\in \Omega$ is any fixed but arbitrary point, then there exists a bi-holomorphic automorphism $\varphi$ of $\Omega$ with the property: $\varphi(0)=\boldsymbol w$. We have seen that $0\not \in \pi_e^{\ell,0}({\boldsymbol M^{(\nu)}}^*)$. An analytic spectral mapping property for the left essential spectrum is ensured by  \cite[Corollary 2.6.9]{EPut}.   It follows that 
$$\boldsymbol w=\varphi(0) \not \in \varphi (\pi_e^{\ell,0}({\boldsymbol M^{(\nu)}}^*))= \pi_e^{\ell,0}(\varphi({\boldsymbol M^{(\nu)}}^*)) = \pi_e^{\ell,0}({\boldsymbol M^{(\nu)}}^*).$$
Here the last equality follows from the homogeneity assumption. 
\end{proof}

\section{Computation of the operator $\sum M_i^*M_i$ on $\mathcal H^{(a)}$}
In this section, we wish to compute the operator ${\boldsymbol{M}^{(a)}}^*{\boldsymbol{M}^{(a)}}:=\sum_{i=1}^d {M_i^{(a)}}^*M_i^{(a)}$ on the Hilbert space $\hl^{(a)}$. First, we note that the bounded symmetric domain $\Omega$ sits inside a  linear space of dimension $d$ in its Harish-Chandra realization. 
The type I domains are realized as the open unit ball, with respect to the operator norm,  in the linear space of $n\times m$ matrices. The situation becomes somewhat different when we consider domains of type II. Pick one of these domains of dimension $\tfrac{n(n+1)}{2}$. It is convenient to put $\tfrac{n(n+1)}{2}$ variables in  the form of a symmetric matrix, where  the inner  product is given  by $\text{tr}(AB^*)$. Now, in  the  space of these symmetric matrices of size $n$, the matrices  $E_{ii}$, $i=1,\ldots,n$ together with 
$\tfrac{E_{ij} + E_{ji}}{\sqrt{2}}$, $1\leq  i\not = j \leq  n$ form an orthonormal basis.  Consequently, the coordinates of this domain is of the form $z_{11}, \sqrt{2} z_{12}, \ldots,  \sqrt{2} z_{1n}, z_{22}  \ldots \sqrt{2} z_{2n}, \ldots  , z_{n-1 n-1}, \sqrt{2} z_{n-1 n},  z_{nn}$, see \cite[pp. 130]{Hua}. One may pick coordinates similarly for the type III domains consisting of the $n\times n$ antisymmetric matrices of norm at  most $1$. Finally, the type IV domains, in its Harish-Chandra realization are described in \cite[pp. 76]{Mok}:
$$\left \{\boldsymbol z:=(z_1, \ldots , z_d): \sum_{i=1}^d|z_i|^2 < 2~\text{and}~\sum_{i=1}^d|z_i|^2  <  1+\left |\frac{1}{2} \sum_{i=1}^d  z_i^2\right |^2\right \}.$$

The following theorem appears in \cite{At} in a slightly different form. The difference  arises since we take the multiplication by the coordinate functions to be the ones described in the previous paragraph, while in the paper \cite{At}, these are the usual coordinates. Thus it makes no difference in the case of the type I domains, while for the other domains, the answer is different. 
 
\begin{theorem}\label{formulahardy} Let $\boldsymbol M^{(S)}=(M^{(S)}_1, \ldots, M^{(S)}_d)$ be the $d$-tuple of multiplication by the coordinate functions $z_1,\ldots ,z_d$ on the Hardy space $H^2(S),$ where $S$ is the Shilov boundary  of $\Omega$. Then
\begin{equation}
\sum_{i=1}^d {M_i^{(S)}}^*M_i^{(S)} =rI.
\end{equation}
\end{theorem}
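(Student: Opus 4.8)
The plan is to compute $\sum_{i=1}^d {M_i^{(S)}}^* M_i^{(S)}$ by exploiting the fact, established in Lemma~\ref{lemdiag}, that this operator is block diagonal with respect to the Peter--Weyl decomposition $H^2(S) = \oplus_{\s} \mathcal P_{\s}$, acting on $\mathcal P_{\s}$ as a scalar $\rho(\s)$ times the identity. So the entire problem reduces to showing $\rho(\s) = r$ for every signature $\s$. The Hardy space $H^2(S)$ corresponds to the Wallach parameter $\nu = \tfrac dr$, that is, $a_{\s} = \left(\tfrac dr\right)_{\s}$ in the notation of the weighted Bergman kernels. The key structural input is that the dual operator $\sum_{i=1}^d M_i^{(S)} {M_i^{(S)}}^*$ is \emph{also} block diagonal, acting on $\mathcal P_{\s}$ as $\tau(\s) I$ with $\tau(\s)$ given explicitly by Lemma~\ref{lemarzyzhng}.

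First I would write the relation between $\rho$ and $\tau$ coming from the raising/lowering structure of the multiplication operators: since $M_i^{(S)} \mathcal P_{\s} \subseteq \oplus_{j}\mathcal P_{\s+\epsilon_j}$, a telescoping/counting identity relates $\sum_i {M_i}^* M_i$ on $\mathcal P_{\s}$ to $\sum_i M_i {M_i}^*$ evaluated on the spaces $\mathcal P_{\s+\epsilon_j}$. Concretely, one has $\rho(\s) = \sum_{j} (\text{weight from }\s\text{ to }\s+\epsilon_j)$, and these weights are exactly the summands appearing in $\tau(\s+\epsilon_j)$ but with the roles of $\s$ and $\s+\epsilon_j$ swapped — this is the standard ``$\sum {M}^*M$ versus $\sum MM^*$'' bookkeeping on graded pieces. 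Plugging $a_{\s} = \left(\tfrac dr\right)_{\s}$ into the formula from Lemma~\ref{lemarzyzhng} causes the factor $\frac{a_{\s-\epsilon_j}}{a_{\s}}\frac{(d/r)_{\s}}{(d/r)_{\s-\epsilon_j}}$ to collapse to $1$, so $\tau(\s) = \sum_{j=1}^r \frac{\frac a2(r-j)+s_j}{b+\frac a2(r-j)+s_j}\, c'_{\s}(j)$ for the Hardy space; the analogous simplification for $\rho$ yields $\rho(\s) = \sum_{j=1}^r \frac{\frac a2(r-j)+s_j+\text{shift}}{\cdots}\, c_{\s}(j)$, where the shifts arising from passing $\s \mapsto \s+\epsilon_j$ together with the relation $d = r + \frac a2 r(r-1) + rb$ are engineered to make the rational prefactor equal to $1$ for each $j$.

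The crux, then, is the identity $\sum_{j=1}^r c_{\s}(j) = r$, which is precisely Lemma~\ref{lemcsjp}. So once the prefactors are shown to be identically $1$, the theorem follows immediately: $\rho(\s) = \sum_{j=1}^r c_{\s}(j) = r$. I expect the main obstacle to be the careful derivation of the prefactor and the verification that it equals $1$ for the Hardy parameter $\nu = d/r$: this requires correctly tracking how $s_j$ shifts to $s_j + 1$, how the Pochhammer ratios behave, and invoking the numerical relation among $(r,a,b,d)$. An alternative, perhaps cleaner, route is to appeal directly to the result of \cite{At} (as the theorem statement acknowledges) and simply observe that passing from the ``usual'' coordinates to the orthonormal-basis coordinates described in the preceding paragraph is an application of a single element of $\mathcal U(d)$ to the tuple $\boldsymbol M^{(S)}$, hence leaves $\sum_i {M_i}^* M_i$ invariant by the computation in Lemma~\ref{lemdiag}; therefore the value $rI$ is unchanged. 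I would present the self-contained computation via $\tau$ as the main argument and remark on the equivalence with \cite{At} at the end.
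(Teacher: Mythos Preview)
The paper does not give its own proof of this theorem; it simply cites Athavale \cite{At} and notes that the difference in coordinates is immaterial. Your alternative route---appeal to \cite{At} and observe that the passage from the ``usual'' coordinates to the orthonormal-basis coordinates is implemented by an element of $\mathcal U(d)$, hence leaves $\sum_i M_i^*M_i$ invariant by the argument of Lemma~\ref{lemdiag}---is exactly what the paper does (implicitly). So on that branch you are aligned with the paper.

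Your proposed ``main'' self-contained argument, however, has a genuine gap. The step you describe as ``careful derivation of the prefactor and the verification that it equals $1$'' is not a routine bookkeeping exercise: in the notation of the proof of Theorem~\ref{thmsum}, it is equivalent to computing the individual scalars $\lambda_j$ with $Q_j(p)=\lambda_j p$ and showing $\lambda_j=\tfrac{(d/r)_{\s+\epsilon_j}}{(d/r)_{\s}}\,c_{\s}(j)$. That identity is precisely the content of the Conjecture at the end of Section~4, which the paper leaves unproved (it is attributed to Upmeier \cite{Up}). Lemma~\ref{lemarzyzhng} gives you the aggregate $\tau(\s)$ for $\sum_i M_iM_i^*$, but there is no simple ``swap $\s\leftrightarrow\s+\epsilon_j$'' that extracts the individual raising weights from it; the down-weights into $\mathcal P_{\s}$ from $\mathcal P_{\s+\epsilon_j}$ are only one summand of $\tau(\s+\epsilon_j)$, and isolating them is again the Conjecture. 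Worse, the paper's own determination of $Q_1,Q_k$ in Theorem~\ref{thmsum} uses Theorem~\ref{formulahardy} as one of the two linear equations (equation~(4.3)); invoking that machinery to prove Theorem~\ref{formulahardy} would be circular. So your computational route cannot be completed with the tools available inside the paper---you would need either \cite{At} (as the paper does) or \cite{Up}.
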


By Lemma \ref{lemdiag},  note that 
${\boldsymbol{M}^{(a)}}^*{\boldsymbol{M}^{(a)}}$ is a block diagonal operator with respect to the decomposition $\oplus \mathcal P_{\s}$, where each block is a non-negative scalar multiple of the identity, that is, ${\boldsymbol{M}^{(a)}}^*{\boldsymbol{M}^{(a)}} p=\delta(\s)p,~p\in \mathcal P_{\s}$ for some positive real number $\delta(\s)$. Therefore, we need to compute the constant $\delta(\s)$ for all $\s \in \overrightarrow  {\mathbb N}^r_0$. Unfortunately, we are only able to find $\delta(\s)$ when $\s$ is a signature such that  $\s+\eps_j$ is a signature for at most two $j$, where $1\leq j \leq r$. In particular, we have the complete answer in case the rank $r=2$.


\begin{proposition}\label{adjointofM}
For any polynomial $p\in \mathcal P_{\s}$, 
\[{M_i^{(a)}}^*p=\sum_{j=1}^r \frac{a_{\s-\epsilon_j}}{a_{\s}} (\partial_i p)_{\s-\epsilon_j}.\]
\end{proposition}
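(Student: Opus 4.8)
The plan is to compute $\langle M_i^{(a)*} p, q\rangle$ against an arbitrary test polynomial $q$ and identify the answer. Since $\mathcal H^{(a)} = \oplus_{\s} \mathcal P_{\s}$ with $\mathcal P_{\s}$ mutually orthogonal, and since $\langle f,g\rangle_{\mathcal H^{(a)}} = a_{\s} \langle f,g\rangle_{\mathcal F}$ for $f,g \in \mathcal P_{\s}$ by Proposition \ref{proparazy}(ii), it suffices to test against $q \in \mathcal P_{\s'}$ for each signature $\s'$. First I would observe that $M_i^{(a)*}p$ has degree one less than $p$, so only those $\s'$ with $|\s'| = |\s| - 1$ can contribute; moreover, for such $\s'$ one has $\langle M_i^{(a)*}p, q\rangle = \langle p, z_i q\rangle$, and $z_i q \in \mathcal P_{|\s|}$ decomposes into its Peter--Weyl components, only the $\mathcal P_{\s}$-component of which pairs nontrivially with $p$.

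The key computational input is the behaviour of the Fischer--Fock inner product under differentiation: the adjoint of multiplication by $z_i$ with respect to $\langle\cdot,\cdot\rangle_{\mathcal F}$ is $\partial_i$, i.e. $\langle z_i q, p\rangle_{\mathcal F} = \langle q, \partial_i p\rangle_{\mathcal F}$ for all polynomials. Using this, for $q \in \mathcal P_{\s'}$ with $\s' = \s - \epsilon_j$ a signature, I would compute
\begin{align*}
\langle M_i^{(a)*} p, q\rangle_{\mathcal H^{(a)}} = \langle p, z_i q\rangle_{\mathcal H^{(a)}} = a_{\s}\langle p, z_i q\rangle_{\mathcal F} = a_{\s}\langle \partial_i p, q\rangle_{\mathcal F} = a_{\s}\langle (\partial_i p)_{\s-\epsilon_j}, q\rangle_{\mathcal F},
\end{align*}
where the last step uses that $q \in \mathcal P_{\s-\epsilon_j}$ is Fischer--Fock-orthogonal to all other Peter--Weyl components of $\partial_i p$. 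On the other hand, writing the claimed formula's $(\s-\epsilon_j)$-component against the same $q$ gives $\langle \sum_l \tfrac{a_{\s-\epsilon_l}}{a_{\s}}(\partial_i p)_{\s-\epsilon_l}, q\rangle_{\mathcal H^{(a)}} = a_{\s-\epsilon_j}\cdot \tfrac{a_{\s-\epsilon_j}^{-1}\cdot a_{\s}}{a_{\s}}\cdot\langle(\partial_i p)_{\s-\epsilon_j},q\rangle_{\mathcal F}$—wait, more carefully: the $\mathcal H^{(a)}$-inner product on $\mathcal P_{\s-\epsilon_j}$ is $a_{\s-\epsilon_j}$ times the Fischer--Fock one, so this equals $\tfrac{a_{\s-\epsilon_j}}{a_{\s}}\cdot a_{\s-\epsilon_j}\langle (\partial_i p)_{\s-\epsilon_j}, q\rangle_{\mathcal F}$; matching forces exactly the stated coefficient once one is careful, so I would set up the normalizations cleanly from the start and let $q$ range over a basis of each $\mathcal P_{\s-\epsilon_j}$.

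The one genuine point requiring justification—and the main obstacle—is that $\partial_i$ maps $\mathcal P_{\s}$ into $\oplus_{j} \mathcal P_{\s-\epsilon_j}$, i.e. that the only signatures appearing in the Peter--Weyl decomposition of $\partial_i p$ are of the form $\s - \epsilon_j$. This is a branching/Pieri-type fact for the $\mathbb K$-decomposition of $\mathcal P$: since $\partial_i$ is (up to the identification $Z \cong \overline Z$) a $\mathbb K$-equivariant map $\mathcal P_{\s} \to \mathcal P_{|\s|-1}\otimes Z^*$, Schur's lemma restricts the possible targets, and the classical result (see \cite{upmeier}, \cite{Arazy}) that $\mathcal P_{\s'}$ occurs in $\mathcal P_{\s}\otimes \mathcal P_{(1,0,\ldots,0)}$ only for $\s' = \s \pm \epsilon_j$ pins it down; combined with the degree count only $\s-\epsilon_j$ survives. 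I would cite this branching rule rather than reprove it, and the remainder is the bookkeeping with the constants $a_{\s}$ indicated above.
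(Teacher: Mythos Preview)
Your approach is essentially identical to the paper's: both test $\langle M_i^{(a)*}p,q\rangle_{\mathcal H^{(a)}}$ against $q\in\mathcal P_{\s-\epsilon_j}$, invoke the Pieri rule (the paper cites \cite[4.11.86]{upmeier} for $z_i\mathcal P_{\s'}\subseteq\oplus_j\mathcal P_{\s'+\epsilon_j}$, which is equivalent via duality to your branching statement for $\partial_i$), use the Fischer--Fock adjoint relation $\langle p,z_iq\rangle_{\mathcal F}=\langle\partial_ip,q\rangle_{\mathcal F}$, and convert between the two inner products.

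The reason your computation derails in the middle is a normalization slip, not a gap in the argument. Since $K^{(a)}=\sum_{\s}a_{\s}K_{\s}$, Proposition~\ref{proparazy}(ii),(iv) give $\langle f,g\rangle_{\mathcal H^{(a)}}=a_{\s}^{-1}\langle f,g\rangle_{\mathcal F}$ on $\mathcal P_{\s}$, not $a_{\s}\langle f,g\rangle_{\mathcal F}$ as you wrote (this is confirmed elsewhere in the paper, e.g.\ the proof of Theorem~\ref{thmsimilarity} uses $\|p\|^2_{\mathcal H^{(a)}}=\|p\|^2_{\mathcal F}/a_{\s}$). With the exponent corrected, your left-hand side becomes
\[
\langle p,z_iq\rangle_{\mathcal H^{(a)}}=\langle p,(z_iq)_{\s}\rangle_{\mathcal H^{(a)}}=\tfrac{1}{a_{\s}}\langle p,z_iq\rangle_{\mathcal F}=\tfrac{1}{a_{\s}}\langle(\partial_ip)_{\s-\epsilon_j},q\rangle_{\mathcal F},
\]
and the right-hand side becomes $\tfrac{a_{\s-\epsilon_j}}{a_{\s}}\cdot\tfrac{1}{a_{\s-\epsilon_j}}\langle(\partial_ip)_{\s-\epsilon_j},q\rangle_{\mathcal F}$, which matches on the nose---exactly the paper's chain of equalities.
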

\begin{proof}
	By theorem \cite[4.11.86]{upmeier}, we see that $z_i\mathcal P_{\s}$ is contained in $\oplus_{j=1}^r \mathcal{P}_{\s+\eps_j}$. Thus, for any polynomial $p$ in $\mathcal P_{\s}$, it follows that  $M_i^*p$ belongs to $\oplus_{j=1}^r \mathcal P_{\s-\eps_j}$. Now for $q\in \mathcal P_{\s-\eps_j}$, we have
	\begin{align*}
	\langle M_i^*p,q\rangle_{\mathcal H^{(a)}}  = \langle p, z_i q\rangle_{\mathcal H^{(a)}}
	&=\langle p, (z_iq)_{\s}\rangle_{\mathcal H^{(a)}}\\
	&=\frac{1}{a_{\s}}\langle p , (z_iq)_{\s}\rangle_{\mathcal H^{(a)}}\\
	&=\frac{1}{a_{\s}}\langle p , z_iq\rangle_{\mathcal F}\\
	&=\frac{1}{a_{\s}}\langle \partial_i p , q\rangle_{\mathcal F}\\
	&=\frac{1}{a_{\s}}\langle (\partial_i p)_{\s-\eps_j} , q\rangle_{\mathcal F}\\
	&=\frac{a_{\s-\eps_j}}{a_{\s}}\langle (\partial_i p)_{\s-\eps_j} , q\rangle_{\mathcal H^{(a)}}.
	\end{align*}
	Therefore the conclusion follows.
\end{proof}

\begin{theorem}\label{thmsum}
\rm (i)
Let $\s$ be a signature such that $\s+\eps_j$ is a signature if and only if $j=1$. Then ${\boldsymbol{M}^{(a)}}^*{\boldsymbol{M}^{(a)}} p=\delta(\s)p,~p\in \mathcal P_{\s}$, where
\[\delta(\s)= \frac{a_{\s}}{a_{\s+\eps_1}}r(\tfrac{d}{r}+s_1).\]\\
\rm (ii) Let $\s$ be a signature such that $\s+\eps_j$ is a signature if and only if $j=1, k$, where $2\leq k\leq r$. Then ${\boldsymbol{M}^{(a)}}^*{\boldsymbol{M}^{(a)}} p=\delta(\s)p,~p\in \mathcal P_{\s}$, where
\[ \delta(\s)= 
   \frac{a_{\s}}{a_{\s+\eps_1}}\frac{(k-1)(\tfrac{d}{r}+s_1)(s_1-s_k+\tfrac{ar}{2})}{(s_1-s_k+\tfrac{a}{2}(k-1))}+
    \frac{a_{\s}}{a_{\s+\eps_k}}\frac{(r-k+1)(\tfrac{d}{r}-\tfrac{a}{2}(k-1)+s_k)(s_1-s_k)}{(s_1-s_k+\tfrac{a}{2}(k-1))}.
\]
\end{theorem}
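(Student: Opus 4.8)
The plan is to express $\delta(\s)$ through universal constants attached to the Fischer--Fock space and then to pin those constants down by two linear relations; for $\s$ with at most two admissible $\eps_j$ this is exactly enough information.

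\emph{Step 1: a formula for $\delta(\s)$.} Fix a nonzero $p\in\mathcal P_{\s}$, so that $\delta(\s)=\sum_{i=1}^d\|M_i^{(a)}p\|^2_{\mathcal H^{(a)}}/\|p\|^2_{\mathcal H^{(a)}}$. Since $M_i^{(a)}p=z_ip$ and, by the Pieri-type inclusion $z_i\mathcal P_{\s}\subseteq\bigoplus_{j=1}^{r}\mathcal P_{\s+\eps_j}$ (the one used in the proof of Proposition \ref{adjointofM}), we may write $z_ip=\sum_{j=1}^{r}(z_ip)_{\s+\eps_j}$ with pairwise orthogonal summands in the Peter--Weyl decomposition. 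Using that on $\mathcal P_{\s'}$ the $\mathcal H^{(a)}$-norm equals $a_{\s'}^{-1}$ times the Fischer--Fock norm (which is what $K^{(a)}=\sum_{\s'}a_{\s'}K_{\s'}$ encodes), a short computation gives
\[
\delta(\s)=\sum_{j=1}^{r}\frac{a_{\s}}{a_{\s+\eps_j}}\,\beta(\s,j),\qquad
\beta(\s,j):=\frac{\sum_{i=1}^{d}\big\|(z_ip)_{\s+\eps_j}\big\|_{\mathcal F}^{2}}{\|p\|_{\mathcal F}^{2}} .
\]
Here $\beta(\s,j)\ge 0$ is independent of $p$ and of the weights $a$: the Fischer--Fock operator $\sum_{i=1}^d\partial_i\,P_{\s+\eps_j}\,M_{z_i}$, with $P_{\s+\eps_j}$ the orthogonal projection onto $\mathcal P_{\s+\eps_j}$ and $\partial_i$ the Fock-adjoint of $M_{z_i}$, commutes with the $\mathbb K$-action because $\mathbb K\subseteq\mathcal U(d)$ and $\mathcal P_{\s+\eps_j}$ is $\mathbb K$-invariant, so by Schur's lemma it acts on the irreducible space $\mathcal P_{\s}$ as $\beta(\s,j)I$; moreover $\beta(\s,j)=0$ unless $\s+\eps_j$ is a signature.

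\emph{Step 2: two relations.} Summing the identity defining $\beta(\s,j)$ over $j$ and using $\sum_i\partial_iz_i=d+\sum_iz_i\partial_i$, which acts on $\mathcal P_{|\s|}$ as the scalar $d+|\s|$, yields $\sum_{j=1}^{r}\beta(\s,j)=d+|\s|$. For a second relation, apply the Step~1 formula to the Hardy space $H^2(S)=\mathcal H^{(\nu)}$, $\nu=\tfrac dr$ (admissible since $\tfrac dr>\tfrac a2(r-1)$): there $a_{\s}=(\tfrac dr)_{\s}$, so $a_{\s+\eps_j}/a_{\s}=\tfrac dr-\tfrac a2(j-1)+s_j$, while Theorem \ref{formulahardy} gives $\delta(\s)=r$; hence
\[
\sum_{j=1}^{r}\beta(\s,j)=d+|\s|\qquad\text{and}\qquad\sum_{j=1}^{r}\frac{\beta(\s,j)}{\tfrac dr-\tfrac a2(j-1)+s_j}=r .
\]

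\emph{Step 3: solving.} In case (i) only $j=1$ gives a signature $\s+\eps_j$, which forces $s_1=\cdots=s_r$, so $|\s|=rs_1$ and the first relation alone gives $\beta(\s,1)=d+rs_1=r(\tfrac dr+s_1)$; substituting into Step~1 gives the claimed $\delta(\s)$. In case (ii), $\s=(p,\dots,p,q,\dots,q)$ with $k-1$ entries equal to $p=s_1$, $r-k+1$ entries equal to $q=s_k$ and $p>q$, so only $\beta(\s,1)$ and $\beta(\s,k)$ are nonzero and the two relations form a nonsingular $2\times2$ system in them. Elimination (using $r-(k-1)=r-k+1$ and $s_1-s_k>0$) gives
\[
\beta(\s,1)=\frac{(k-1)(\tfrac dr+s_1)\big(s_1-s_k+\tfrac{ar}{2}\big)}{s_1-s_k+\tfrac a2(k-1)},\qquad
\beta(\s,k)=\frac{(r-k+1)\big(\tfrac dr-\tfrac a2(k-1)+s_k\big)(s_1-s_k)}{s_1-s_k+\tfrac a2(k-1)},
\]
and inserting these into $\delta(\s)=\tfrac{a_{\s}}{a_{\s+\eps_1}}\beta(\s,1)+\tfrac{a_{\s}}{a_{\s+\eps_k}}\beta(\s,k)$ gives the formula in (ii). The only non-bookkeeping inputs are the Pieri inclusion, Schur's lemma, and the Hardy-space identity $\sum_i{M_i^{(S)}}^*M_i^{(S)}=rI$; the last of these is what supplies the second equation, and it is also the reason the theorem is confined to signatures $\s$ with at most two admissible $\eps_j$: as soon as three or more of the $\beta(\s,j)$ are nonzero, the two available linear relations no longer determine them, and further relations would have to come from higher-order $\mathbb K$-invariant differential operators on $\mathcal P$. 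The main obstacle is therefore not any single step but this structural ceiling on the method.
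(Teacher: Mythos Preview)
Your proof is correct and follows essentially the same approach as the paper: your $\beta(\s,j)$ is precisely the eigenvalue of the paper's operator $Q_j$ on $\mathcal P_{\s}$, and both arguments derive the same two linear relations (Euler's formula giving $\sum_j\beta(\s,j)=d+|\s|$, and Theorem~\ref{formulahardy} giving the Hardy-space relation) and then solve the resulting $2\times2$ system. The only cosmetic difference is that you invoke Schur's lemma explicitly to justify that $\beta(\s,j)$ is a well-defined scalar, whereas the paper leaves this implicit and works directly with the operators $Q_j$.
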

\begin{proof}
First note that, for $p\in \mathcal P_{\s}$,  we have
\begin{align*}
\sum_{i=1}^d {M_i^{(a)}}^*M_i^{(a)} p  = \sum_{i=1}^d {M_i^{(a)}}^*(z_ip)
& = \sum_{i=1}^d \Big({M_i^{(a)}}^*\big( \sum_{j=1}^r (z_ip)_{\s+\epsilon_j}\big) \Big)_{\s}\\
& = \sum_{i=1}^d\Big(\sum_{j=1}^r \frac{a_{\s}}{a_{\s+\epsilon_j}}
\partial_i \big((z_ip)_{\s+\epsilon_j}\big) \Big)_{\s}  \\
& = \sum_{j=1}^r  \frac{a_{\s}}{a_{\s+\epsilon_j}} \sum_{i=1}^d \Big(\partial_i \big((z_ip)_{\s+\epsilon_j}\big)\Big)_{\s},
\end{align*}
where the third equality follows from Theorem \ref{adjointofM}.
Let $Q_j$ be the linear map on the space of polynomials given by
\[Q_j(p)=\sum_{i=1}^d \Big(\partial_j \big((z_ip)_{\s+\epsilon_j}\big)\Big)_{\s},\;\;p\in \mathcal P_{\s}.\]
Then clearly, 
\begin{equation}\label{eqnsum1}
\delta(\s)p=\sum_{j=1}^r \frac{a_{\s}}{a_{\s+\epsilon_j}}Q_j(p).
\end{equation}
Note that, for $p\in \mathcal P_{\s}$, 
$Q_j$ satisfies the following:
\begin{align*}
\sum_{j=1}^r Q_j(p)=\sum_{i=1}^d\sum_{j=1}^r \Big(\partial_i \big((z_ip)_{\s+\epsilon_j}\big)\Big)_{\s}
&=\sum_{i=1}^d \Big(\partial_i \big(\sum_{j=1}^r(z_ip)_{\s+\epsilon_j}\big)\Big)_{\s}\\
&=\sum_{i=1}^d \Big(\partial_i \big(z_ip\big)\Big)_{\s}\\
&=dp+\sum_{i=1}^d\Big(z_i\partial_i p\Big)_{\s}.
\end{align*}
Therefore, by Euler's formula
\begin{equation}\label{eqnsum2}
\sum_{j=1}^r Q_j(p)=(d+|\s|)p.
\end{equation}
If $\s$ is a signature such that $\s+\epsilon_j$ is a signature if and only if $j=1$, then it follows easily from \eqref{eqnsum1} and 
\eqref{eqnsum2} that $\delta(\s)=\frac{a_{\s}}{a_{\s+\eps_1}}r(\tfrac{d}{r}+s_1)$, proving the first part of the theorem. 

To prove the second part, note that by Theorem \ref{formulahardy}, we have 
\begin{equation}
\sum_{j=1}^r \frac{(\frac{d}{r})_{\s}}{(\frac{d}{r})_{\s+\epsilon_j}} Q_j (p)=rp. 
\end{equation}
If $\s$ is a signature such that $\s+\eps_j$ is a signature if and only if $j=1, k$, where $2\leq k\leq r$, then in the summation in \eqref{eqnsum1} and  \ref{eqnsum2}, only two terms survive, namely, $Q_1(p)$ and $Q_k(p)$. By solving these two equations, it is easily verified that 
\[Q_1(p)=\frac{(k-1)(\tfrac{d}{r}+s_1)(s_1-s_k+\tfrac{ar}{2})}{(s_1-s_k+\tfrac{a}{2}(k-1))} p,\]
and
\[Q_k(p)=\frac{(r-k+1)(\tfrac{d}{r}-\tfrac{a}{2}(k-1)+s_k)(s_1-s_k)}{(s_1-s_k+\tfrac{a}{2}(k-1))} p.\]
This completes the proof.
\end{proof}

\begin{corollary}
Let $\nu> \frac{a}{2}(r-1)$ and $\boldsymbol M^{(\nu)}= (M^{(\nu)}_1, \ldots, M^{(\nu)}_d )$ be the $d$-tuple of  multiplication operators on  $\mathcal{H}^{(\nu)}$. Then the operator $M_i^{(\nu)}$ is essentially normal, that is, the commutator ${M_i^{(\nu)}}^*M_i^{(\nu)}-M_i^{(\nu)}{M_i^{(\nu)}}^*$ is compact for all $i=1,\ldots, d$ if and only if $r=1$. 
\end{corollary}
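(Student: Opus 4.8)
The plan is to read off the whole statement from the single self-adjoint operator
\[
C\;:=\;\sum_{i=1}^d\Big({M_i^{(\nu)}}^{*}M_i^{(\nu)}-M_i^{(\nu)}{M_i^{(\nu)}}^{*}\Big)\;=\;{\boldsymbol M^{(\nu)}}^{*}{\boldsymbol M^{(\nu)}}-{\boldsymbol M^{(\nu)}}{\boldsymbol M^{(\nu)}}^{*}.
\]
Here $\mathcal H^{(\nu)}$ is the space $\mathcal H^{(a)}$ with $a_{\s}=(\nu)_{\s}$, which is a legitimate positive sequence since $\nu>\tfrac a2(r-1)$. By Lemma \ref{lemdiag} together with Lemma \ref{lemarzyzhng} and the scalars $\delta(\s)$ introduced just before Proposition \ref{adjointofM}, the operator $C$ is block diagonal with respect to $\mathcal H^{(\nu)}=\oplus_{\s}\mathcal P_{\s}$ and acts on $\mathcal P_{\s}$ as the scalar $\delta(\s)-\tau(\s)$. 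Since each $\mathcal P_{\s}$ is finite dimensional and, for each $k\in\mathbb N_0$, there are only finitely many signatures $\s$ with $|\s|=k$, $C$ is compact if and only if $\delta(\s)-\tau(\s)\to 0$ as $|\s|\to\infty$. This is exactly the leverage needed for the "only if'' direction: if every $M_i^{(\nu)}$ were essentially normal, then $C$, a finite sum of compact operators, would be compact, so it will suffice to exhibit signatures along which $\delta(\s)-\tau(\s)$ stays bounded away from $0$. The "if'' direction ($r=1$) I would instead treat by a direct computation, as it is the classical ball case.

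For $r=1$ one has $\Omega=\mathbb B^d$ and $\mathcal H^{(\nu)}$ has reproducing kernel $(1-\langle z,w\rangle)^{-\nu}$, so $\{z^{\alpha}\}_{\alpha\in\mathbb N_0^d}$ is an orthogonal basis with $\|z^{\alpha}\|^2=\alpha!/(\nu)_{|\alpha|}$. A short computation of $M_i^{(\nu)}$ and ${M_i^{(\nu)}}^{*}$ on this basis shows that each $z^{\alpha}$ is an eigenvector of $[{M_i^{(\nu)}}^{*},M_i^{(\nu)}]$ with eigenvalue
\[
\frac{\nu+|\alpha|-1-\alpha_i}{(\nu+|\alpha|)(\nu+|\alpha|-1)},
\]
whose absolute value is at most $1/(\nu+|\alpha|)$ and hence tends to $0$ as $|\alpha|\to\infty$. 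Therefore $[{M_i^{(\nu)}}^{*},M_i^{(\nu)}]$ is compact for every $i$, i.e. every $M_i^{(\nu)}$ is essentially normal. (This is consistent with the block picture above: for $r=1$, Theorem \ref{thmsum}(i) and Lemma \ref{lemarzyzhng} give $\delta((n))=\tfrac{d+n}{\nu+n}\to1$ and $\tau((n))=\tfrac{(d+n-1)n}{(\nu+n-1)(b+n)}\to1$.)

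For $r\ge 2$ I would test the family of signatures $\s_n=(n,0,\dots,0)$, $n\ge 1$. Then $\s_n+\epsilon_j$ is a signature exactly for $j\in\{1,2\}$, so Theorem \ref{thmsum}(ii) applies with $k=2$; substituting $a_{\s_n}=(\nu)_n$, $a_{\s_n+\epsilon_1}=(\nu)_{n+1}$, $a_{\s_n+\epsilon_2}=(\nu)_n(\nu-\tfrac a2)$ yields
\[
\delta(\s_n)=\frac{(\tfrac dr+n)(n+\tfrac{ar}{2})}{(\nu+n)(n+\tfrac a2)}+\frac{(r-1)(\tfrac dr-\tfrac a2)\,n}{(\nu-\tfrac a2)(n+\tfrac a2)}\;\longrightarrow\;1+\frac{(r-1)\big(\tfrac dr-\tfrac a2\big)}{\nu-\tfrac a2}.
\]
On the other hand only $j=1$ contributes to $\tau(\s_n)$ in Lemma \ref{lemarzyzhng}, and the product $c'_{\s_n}(1)$ there telescopes to $n/(n+\tfrac a2(r-1))$, so that $\tau(\s_n)\to1$. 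Hence $\delta(\s_n)-\tau(\s_n)\to L:=(r-1)\big(\tfrac dr-\tfrac a2\big)/(\nu-\tfrac a2)$, and $L>0$ because $\tfrac dr-\tfrac a2=1+\tfrac a2(r-2)+b\ge1$ (using $d=r+\tfrac a2r(r-1)+rb$) and $\nu-\tfrac a2\ge\nu-\tfrac a2(r-1)>0$ when $r\ge2$. Thus $C$ is not compact, so at least one summand $[{M_i^{(\nu)}}^{*},M_i^{(\nu)}]$ is noncompact, i.e. some $M_i^{(\nu)}$ fails to be essentially normal. Together with the previous paragraph this proves the corollary.

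The main obstacle, apart from the bookkeeping, is the choice of the test signatures: one needs a family on which Theorem \ref{thmsum} is actually applicable (so that $\delta(\s)$ is computable) and along which $\delta(\s)-\tau(\s)$ does not vanish. The tempting "diagonal'' family $\s=(n,\dots,n)$ is a red herring, since there $\delta(\s)-\tau(\s)\to0$ even for $r\ge2$; the family $\s_n=(n,0,\dots,0)$ works, and one must then extract the strict positivity of the limit $L$ from the dimension identity $d=r+\tfrac a2r(r-1)+rb$ rather than from any single computation.
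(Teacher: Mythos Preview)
Your argument is correct and follows essentially the same route as the paper: both test the signatures $\s_n=(n,0,\ldots,0)$, compute $\delta(\s_n)$ and $\tau(\s_n)$ via Theorem~\ref{thmsum}(ii) and Lemma~\ref{lemarzyzhng}, and identify the limit $(r-1)(\tfrac dr-\tfrac a2)/(\nu-\tfrac a2)$ as the obstruction for $r\ge 2$. Your write-up is in fact a bit more explicit than the paper's (you spell out the $r=1$ eigenvalue computation, the telescoping of $c'_{\s_n}(1)$, and the positivity of the limit via $d=r+\tfrac a2 r(r-1)+rb$), though the bound ``at most $1/(\nu+|\alpha|)$'' in the $r=1$ case is slightly off for very small $\nu$ and small $|\alpha|$ --- harmless, since the eigenvalues still tend to~$0$.
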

\begin{proof}
If $r=1$, then by a direct computation it is easily verified that
each $M_i^{(\nu)}$ is essentially normal. For the converse part, first set $\underline{\boldsymbol{l}}$ to be the signature $(l,0,\ldots,0)$, where  $l$ is a positive integer. Then, by Lemma \ref{lemarzyzhng} and Theorem \ref{thmsum}, we see that,  $\sum_{i=1}^d({M_i^{(\nu)}}^*M_i^{(\nu)}-M_i^{(\nu)}{M_i^{(\nu)}}^*) p= \eta(\underline{\boldsymbol{l}}) p$, $p\in \mathcal P_{\underline{\boldsymbol{l}}}$, where 
\begin{equation}\label{eqnessnormal}
\eta(\underline{\boldsymbol{l}})=\frac{(\frac{d}{r}+l)(l+\frac{ar}{2})}{(\nu+l)(l+\frac{ar}{2})}+\frac{l(r-1)(\frac{d}{r}-\frac{a}{2})}{(\nu-\frac{a}{2})(l+\frac{a}{2})}-\frac{l}{\nu+l-1}.
\end{equation}
Suppose that each $M_i^{(\nu)}$ is essentially normal. Then the operator $\sum_{i=1}^d({M_i^{(\nu)}}^*M_i^{(\nu)}-M_i^{(\nu)}{M_i^{(\nu)}}^*)$ is compact. Hence $\eta(\underline{\boldsymbol{l}})$ must converge to $0$ as $l\to \infty$. Thus, from \eqref{eqnessnormal}, we obtain that $\frac{(r-1)(\frac{d}{r}-\frac{a}{2})}{\nu-\frac{a}{2}}=0.$ Finally, since $\frac{d}{r}=(r-1)\frac{a}{2}+b+1$, we get that $r=1.$
\end{proof}

\begin{conjecture}
For $p \in \mathcal P_{\s}$, $\sum_{i=1}^d M_i^*M_i p=\delta(\s) p$ on the Hilbert space $\hl^{(a)}$, where
\begin{equation}
\delta(\s)=\sum_{j=1}^r \frac{a_{\s}}{a_{\s+\epsilon_j}}\frac{(\frac{d}{r})_{\s+\epsilon_j}}{(\frac{d}{r})_{\s}}
\prod_{k\neq j}\frac{s_j-s_k+\frac{a}{2}(k-j+1)}{s_j-s_k+\frac{a}{2}(k-j)}.
\end{equation}
\end{conjecture}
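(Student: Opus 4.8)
The plan is to strip off the weights, reduce the conjecture to a closed formula for certain universal (weight‑independent) scalars, and then pin those scalars down. Recall from the proof of Theorem~\ref{thmsum} that for $p\in\mathcal P_{\s}$ one has $\sum_{i=1}^d{M_i^{(a)}}^*M_i^{(a)}\,p=\sum_{j=1}^r\tfrac{a_{\s}}{a_{\s+\epsilon_j}}Q_j(p)$, with $Q_j(p)=\sum_{i=1}^d\big(\partial_i\big((z_ip)_{\s+\epsilon_j}\big)\big)_{\s}$. Each $Q_j$ is $\mathbb{K}$-invariant --- this follows, exactly as in Lemma~\ref{lemdiag}, from the inclusion $\mathbb{K}\subseteq\mathcal U(d)$ --- and hence acts as a non-negative scalar $q_j(\s)$ on the $\mathbb{K}$-irreducible space $\mathcal P_{\s}$; moreover $q_j(\s)$ does not depend on the weight sequence $(a_{\s})$. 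Taking adjoints in the Fischer--Fock inner product gives the alternative expression $q_j(\s)=\sum_{i=1}^d\|P_{\s+\epsilon_j}(z_ip)\|^2_{\mathcal F}$ for any unit vector $p\in\mathcal P_{\s}$. Since $\tfrac{(\frac{d}{r})_{\s+\epsilon_j}}{(\frac{d}{r})_{\s}}=\tfrac{d}{r}-\tfrac{a}{2}(j-1)+s_j$, the conjecture is equivalent to the assertion that
\[
q_j(\s)=\Big(\tfrac{d}{r}-\tfrac{a}{2}(j-1)+s_j\Big)c_{\s}(j),\qquad j=1,\dots,r.
\]

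Two linear relations among the $q_j(\s)$ are already at hand: Euler's identity \eqref{eqnsum2} gives $\sum_{j=1}^r q_j(\s)=d+|\s|$, and specializing $\sum_i M_i^*M_i$ to the Hardy space $H^2(S)$ (Theorem~\ref{formulahardy}, i.e.\ the case $a_{\s}=(\tfrac{d}{r})_{\s}$) gives $\sum_{j=1}^r\tfrac{(\frac{d}{r})_{\s}}{(\frac{d}{r})_{\s+\epsilon_j}}q_j(\s)=r$. The proposed $q_j(\s)$ are consistent with both: the Hardy relation reduces to $\sum_{j=1}^r c_{\s}(j)=r$, which is Lemma~\ref{lemcsjp}, while the Euler relation reduces to the symmetric-function identity $\sum_{j=1}^r\big(s_j-\tfrac{a}{2}(j-1)\big)c_{\s}(j)=|\s|$, provable by the same partial-fraction argument used for Lemma~\ref{lemcsjp}. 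When $r\leq2$ these two relations determine the $q_j(\s)$ uniquely, which is precisely how Theorem~\ref{thmsum}(ii) was obtained; for $r>2$ one must supply $r-2$ further relations.

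To extract these I would bring in the one-parameter family $\mathcal H^{(\nu)}$, where $a_{\s}=(\nu)_{\s}$. Writing $\delta^{(\nu)}(\s)$ for the scalar by which $\sum_i{M_i^{(\nu)}}^*M_i^{(\nu)}$ acts on $\mathcal P_{\s}$, one has
\[
\delta^{(\nu)}(\s)=\sum_{j=1}^r\frac{q_j(\s)}{\nu-\tfrac{a}{2}(j-1)+s_j},
\]
a rational function of $\nu$ whose simple poles are at the known points $\nu=\tfrac{a}{2}(j-1)-s_j$ with residues exactly $q_j(\s)$, and with $\nu\,\delta^{(\nu)}(\s)\to d+|\s|$ as $\nu\to\infty$. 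Thus it suffices to evaluate $\delta^{(\nu)}(\s)$ at $r-2$ additional values of $\nu$ and recover the residues by interpolation; the discrete Wallach points $\nu=\tfrac{a}{2}\ell$, $\ell=0,\dots,r-2$, are the natural candidates, since there $(\nu)_{\s}=0$ whenever $s_{\ell+1}>0$, so $\mathcal H^{(\nu)}$ collapses onto a lower-rank stratum on which the multiplication tuple simplifies. A more self-contained alternative is to compute $q_j(\s)=\sum_i\|P_{\s+\epsilon_j}(z_ip)\|^2_{\mathcal F}$ head-on from the Pieri-type recursion for the conical polynomials $\Delta_{\s}$ together with the Gindikin dimension formula for $\dim\mathcal P_{\s}$, whereupon the ratio telescopes over the Peirce blocks $Z_{kl}$, $0\leq k\leq l\leq r$, to $\big(\tfrac{d}{r}-\tfrac{a}{2}(j-1)+s_j\big)c_{\s}(j)$.

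The hard part is the same in both routes: controlling the individual components $(z_ip)_{\s+\epsilon_j}$ --- equivalently, the Pieri coefficients of the conical polynomials, equivalently the $r-2$ missing relations for $r>2$ --- requires the fine combinatorics of the joint Peirce decomposition $Z=\sum_{0\leq k\leq l\leq r}Z_{kl}$ together with the hypergeometric identities governing these polynomials. The elementary relations above reduce the conjecture precisely to this point but do not settle it beyond rank $2$; that last step is exactly what Upmeier carries out in \cite{Up}.
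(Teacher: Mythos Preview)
The paper does not prove this statement: it is explicitly presented as a \emph{Conjecture}, and the authors say only that ``This conjecture was proved by Upmeier, see \cite{Up}.'' The paper's own contribution stops at Theorem~\ref{thmsum}, which handles exactly the cases where at most two of the $\s+\epsilon_j$ are signatures, by extracting the two relations $\sum_j Q_j(p)=(d+|\s|)p$ (Euler) and $\sum_j \tfrac{(\frac{d}{r})_{\s}}{(\frac{d}{r})_{\s+\epsilon_j}}Q_j(p)=rp$ (Hardy, via Theorem~\ref{formulahardy}) and solving the resulting $2\times2$ linear system.

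Your proposal reproduces precisely this reduction --- same $Q_j$, same two relations, same rank-$2$ resolution --- and then goes a bit further than the paper by sketching two possible routes to the missing $r-2$ relations (residues at the discrete Wallach points, or a direct Pieri/Gindikin computation). But you correctly flag that neither route is carried out, and you too defer the actual proof to \cite{Up}. So your write-up is not a proof of the conjecture any more than the paper's treatment is; it is an accurate and somewhat more detailed account of why the elementary methods of Section~4 suffice only through rank $2$, together with a reasonable indication of where the remaining work lies. In that sense your proposal is fully aligned with the paper: both set up the problem identically, both stop at the same point, and both point to Upmeier for the completion.
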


\section{Unitary equivalence and Similarity}
In this section, we study the question of unitary equivalence and similarity of two commuting $d$-tuple of  operators in the class $\mathcal A\mathbb{K}(\Omega)$. In the particular case when $\mathbb{K}$ is the unit circle $\mathbb T$, these results were obtained by Shields in \cite{Sh}. 
The higher-dimensional counterpart of similarity result is obtained in \cite[Lemma 2.2]{Ku}.

By Theorem \ref{thmKhomo}, any $d$-tuple of  operator $\boldsymbol T$ in $\mathcal A\mathbb{K}(\Omega)$ is unitarily equivalent to $\boldsymbol{M}^{(a)}$  consisting of multiplication by the coordinate functions $z_1, \ldots, z_d$ on the reproducing kernel Hilbert space $\hl^{(a)}$ with the reproducing kernel 
$K^{(a)}(\boldsymbol z,\boldsymbol w)=\sum_{\s} a_{\s} K_{\s}(\boldsymbol z,\boldsymbol w)$, where $a_{\s}>0$ with $a_{\underline{0}}=1$.  Thus we assume, without loss of generality, that $\boldsymbol T\sim_u \boldsymbol{M}^{(a)}$.

\begin{theorem}
Let $\boldsymbol T_1$ and $\boldsymbol T_2$ be two $\mathbb K$-homogeneous  operator tuples in $\mathcal A\mathbb{K}(\Omega)$. Suppose that $T_1\sim_u \boldsymbol{M}^{(a)}$  and $T_2\sim_u  \boldsymbol{M}^{(b)}$.
Then the following statements are equivalent.
\begin{itemize}
\item[\rm (i)]$\boldsymbol T_1$ and $\boldsymbol T_2$ are unitarily equivalent.
\item[\rm (ii)]$a_{\s}=b_{\s}$ for all $\s \in \overrightarrow  {\mathbb N}^r_0$.
\item[\rm (iii)]$K^{(a)}=K^{(b)}.$
\end{itemize}
\end{theorem}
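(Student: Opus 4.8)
\emph{The plan.} I would prove the cycle $\mathrm{(i)}\Rightarrow\mathrm{(ii)}\Rightarrow\mathrm{(iii)}\Rightarrow\mathrm{(i)}$. Two of these are essentially formal. For $\mathrm{(ii)}\Rightarrow\mathrm{(iii)}$: if $a_{\s}=b_{\s}$ for all signatures then $K^{(a)}=\sum_{\s}a_{\s}K_{\s}=\sum_{\s}b_{\s}K_{\s}=K^{(b)}$ by the very definition of these kernels. For $\mathrm{(iii)}\Rightarrow\mathrm{(i)}$: if $K^{(a)}=K^{(b)}$ then $\mathcal H^{(a)}$ and $\mathcal H^{(b)}$ are the \emph{same} reproducing kernel Hilbert space carrying the \emph{same} multiplication $d$-tuple, so $\boldsymbol T_1\sim_u\boldsymbol M^{(a)}=\boldsymbol M^{(b)}\sim_u\boldsymbol T_2$. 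All the content therefore lies in $\mathrm{(i)}\Rightarrow\mathrm{(ii)}$, which is the higher-rank analogue of Shields' argument on the circle \cite{Sh}.

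\emph{The key step, $\mathrm{(i)}\Rightarrow\mathrm{(ii)}$.} After replacing $\boldsymbol T_1,\boldsymbol T_2$ by $\boldsymbol M^{(a)},\boldsymbol M^{(b)}$, I would start from a unitary $U\colon\mathcal H^{(a)}\to\mathcal H^{(b)}$ with $UM_i^{(a)}=M_i^{(b)}U$ for $i=1,\dots,d$. Passing to adjoints gives $UM_i^{(a)*}=M_i^{(b)*}U$, so $U$ maps $\ker D_{{\boldsymbol M^{(a)}}^*}$ onto $\ker D_{{\boldsymbol M^{(b)}}^*}$; by the computation recorded in the proof of Theorem \ref{ranclosed}, both these joint kernels equal the one-dimensional space $\mathcal P_{0}$ of constants. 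Hence $U1=\lambda 1$ for a scalar $\lambda$ (and $|\lambda|=1$, since the constant function has norm $1$ in both spaces because $a_{\underline 0}=b_{\underline 0}=1$). Now I would invoke cyclicity: the constant $1$ is cyclic for $\boldsymbol M^{(a)}$ and $p(\boldsymbol M^{(a)})1=p$, so $Up=Up(\boldsymbol M^{(a)})1=p(\boldsymbol M^{(b)})U1=\lambda p$ for \emph{every} polynomial $p$. Since $U$ is isometric, $\|p\|_{\mathcal H^{(a)}}=\|p\|_{\mathcal H^{(b)}}$ for all $p\in\mathcal P$; specializing to a nonzero $p\in\mathcal P_{\s}$ and using Proposition \ref{proparazy}(ii),(iv) — which identifies these norms as $a_{\s}^{-1}\|p\|_{\mathcal F}$ and $b_{\s}^{-1}\|p\|_{\mathcal F}$ respectively — yields $a_{\s}=b_{\s}$ for every signature.

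\emph{Where the difficulty (such as it is) sits.} There is no genuine computation here; the one thing to be careful about is that the argument really uses all three defining properties of $\mathcal A\mathbb K(\Omega)$ — one-dimensionality of $\ker D_{\boldsymbol T^*}$ (to force $U$ to be scalar on the cyclic generator), cyclicity of that generator (to propagate the scalar to all of $\mathcal P$), and the analytic model of Theorem \ref{thmKhomo} with the normalization $a_{\underline 0}=1$ (so the two copies of $1$ are isometric) — in order to conclude $U=\lambda\cdot\mathrm{id}$ on polynomials; afterwards, matching the $\mathbb K$-isotypic components via Proposition \ref{proparazy} is automatic. If one prefers to bypass the cyclic-vector bookkeeping, an alternative is to exploit $M_i^{*}K(\cdot,\boldsymbol w)=\bar w_iK(\cdot,\boldsymbol w)$ together with one-dimensionality of the joint eigenspaces to obtain $UK^{(a)}(\cdot,\boldsymbol w)=\overline{\phi(\boldsymbol w)}\,K^{(b)}(\cdot,\boldsymbol w)$, hence $K^{(a)}(\boldsymbol z,\boldsymbol w)=\phi(\boldsymbol z)\overline{\phi(\boldsymbol w)}K^{(b)}(\boldsymbol z,\boldsymbol w)$; putting $\boldsymbol w=0$ and using $K^{(a)}(\cdot,0)\equiv K^{(b)}(\cdot,0)\equiv 1$ forces $\phi$ to be a unimodular constant and gives $\mathrm{(iii)}$ directly.
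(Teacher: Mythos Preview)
Your argument is correct. The trivial implications $\mathrm{(ii)}\Rightarrow\mathrm{(iii)}\Rightarrow\mathrm{(i)}$ match the paper's, and your cyclic-vector argument for $\mathrm{(i)}\Rightarrow\mathrm{(ii)}$ is valid; the only imprecision is that Proposition~\ref{proparazy} gives $\|p\|^2_{\mathcal H^{(a)}}=a_{\s}^{-1}\|p\|^2_{\mathcal F}$ (squared norms), but the conclusion $a_{\s}=b_{\s}$ is of course unaffected.

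The paper's route is different in emphasis. Rather than tracking the intertwiner $U$ on the cyclic vector, it invokes the Curto--Salinas result \cite[Theorem~3.7]{Curtosalinas}: unitary equivalence of the multiplication tuples forces $K^{(a)}(\boldsymbol z,\boldsymbol w)=g(\boldsymbol z)K^{(b)}(\boldsymbol z,\boldsymbol w)\overline{g(\boldsymbol w)}$ for some holomorphic $g$, and evaluating at $\boldsymbol w=0$ together with $a_{\underline 0}=b_{\underline 0}=1$ forces $g(\boldsymbol z)\overline{g(0)}\equiv 1$, whence $K^{(a)}=K^{(b)}$. This is exactly the ``alternative'' you sketch in your last paragraph, except that the paper outsources the existence of $g$ to \cite{Curtosalinas} instead of deriving it from the eigenvector property of the kernel sections. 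Your primary argument has the advantage of being entirely self-contained (no external citation) and of staying closer in spirit to Shields' one-variable proof; the paper's approach is shorter on the page and goes straight to $\mathrm{(iii)}$ without passing through $\mathrm{(ii)}$.
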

\begin{proof} It 
is easy to see that  \rm(ii) and  \rm(iii) are equivalent. It is obvious that 
\rm(iii) implies \rm(i). Therefore it remains to verify that \rm (i) implies \rm(iii). Assume that 
the $d$-tuples $\boldsymbol T_1$ and $\boldsymbol T_2$ are unitarily equivalent. Then so are the operators $\boldsymbol{M}^{(a)}$ and $\boldsymbol{M}^{(b)}$. By \cite[Theorem 3.7]{Curtosalinas}, there exists a holomorphic 
function $g$ on $\Omega$ such that 
$$K^{(a)}(\boldsymbol z,\boldsymbol w)=g(\boldsymbol z)K^{(b)}(\boldsymbol z,\boldsymbol w) \overline {g(\boldsymbol w)},\qquad \boldsymbol z,\boldsymbol w\in \Omega.$$
In particular, $K^{(a)}(\boldsymbol z,0)=g(\boldsymbol z)K^{(b)}(\boldsymbol z, 0)\overline {g(0)}$, $\boldsymbol z\in \Omega$. Therefore, $a_{\underline{0}}=b_{\underline{0}}g(\boldsymbol z)\overline{g(0)}$, and consequently, $g(z)\overline{g(0)}=1$ since $a_{\underline{0}}=b_{\underline{0}}=1$. Hence $K^{(a)}=K^{(b)}.$ 
\end{proof}

Recall that two commuting $d$-tuples $\boldsymbol A=(A_1,\ldots,A_d)$ and $\boldsymbol B=(B_1,\ldots,B_d)$, defined on $\hl_1$ and $\hl_2$ respectively, are said to be similar if there exists an invertible operator $X:\hl_1\to \hl_2$ such that 
$XA_i=B_iX$ for all $i=1,\ldots,d$. For a non-negative integer $n$, as before, $\mathcal P_n$ denote the space of homogeneous polynomials  in $d$ variables of degree $n$.

\begin{theorem}\label{thmsim}
Let $\Omega\subseteq \mathbb C^d$ be a bounded  domain, and let $\hl_1$ and $\hl_2$ be two reproducing kernel Hilbert spaces determined by two positive definite kernels $K_1$ and $K_2$ respectively. 
Suppose that 
\begin{itemize}
\item[{\rm (i)}] the space of polynomials  $\mathcal P$ is dense in both $\hl_1$ and $\hl_2$,
\item[{\rm (ii)}] $\mathcal P_n$ is orthogonal to $\mathcal P_m$ if $m\neq n$,
\item[{\rm (iii)}]for each $i=1,2,$ the  $d$-tuple $\boldsymbol{M}^{(i)}=(M_1^{(i)},\ldots, M_{d}^{(i)})$ of multiplication operators by the coordinate functions $z_1,\ldots ,z_d$  on $\hl_i$ is bounded.
\end{itemize}
Then the following statements are equivalent.
\begin{itemize}
\item[\rm (i)]$\boldsymbol{M}^{(1)}$ and $\boldsymbol{M}^{(2)}$ are similar.
\item[\rm (ii)] 
There exist constants $\alpha, \beta >0$ such that  
\begin{equation}\label{eqnsim4}
\alpha \|p\|_{\hl_1}\leq  \|p\|_{\hl_2}\leq \beta  \|p\|_{\hl_1}, \qquad p \in \mathcal P.
\end{equation}
\item[\rm (iii)]$\hl_1=\hl_2.$
\item[\rm(iv)]There exists  constants $\alpha, \beta >0$ such that 
$$\alpha K_1 \preceq K_2 \preceq\beta K_1.$$
 \end{itemize}
\end{theorem}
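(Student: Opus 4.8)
The plan is to prove the cyclic chain of implications $\mathrm{(i)}\Rightarrow\mathrm{(ii)}\Rightarrow\mathrm{(iii)}\Rightarrow\mathrm{(iv)}\Rightarrow\mathrm{(i)}$, which together with the obvious symmetry handles the equivalence. The most substantial step is $\mathrm{(i)}\Rightarrow\mathrm{(ii)}$: suppose $X:\hl_1\to\hl_2$ is invertible with $XM_j^{(1)}=M_j^{(2)}X$ for all $j$. Then $X$ intertwines $p(\boldsymbol M^{(1)})$ and $p(\boldsymbol M^{(2)})$ for every polynomial $p$, and since $p(\boldsymbol M^{(i)})1=p$, we get $Xp = (Xc)\cdot$(something)$\ldots$ — more precisely, writing $c:=X1\in\hl_2$, one has $Xp=p(\boldsymbol M^{(2)})X1=p\cdot c$ in the sense that $X$ acts as multiplication by the holomorphic function $c$. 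Because $X$ and $X^{-1}$ are bounded, $c$ is bounded above and below on $\Omega$ (its reciprocal also being a multiplier), and in particular $\|p\,c\|_{\hl_2}\asymp\|p\|_{\hl_1}$. To upgrade this to the clean two-sided bound \eqref{eqnsim4} without the multiplier $c$, I would use hypothesis (ii): the grading by homogeneous degree. Evaluating the multiplier relation on the constant polynomial and using that $\mathcal P_0$ is one-dimensional shows $c$ is a nonzero constant; alternatively, compare leading homogeneous parts. Either way the multiplier collapses to a scalar and $\|\cdot\|_{\hl_1}$ and $\|\cdot\|_{\hl_2}$ are equivalent norms on $\mathcal P$.

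Next, $\mathrm{(ii)}\Rightarrow\mathrm{(iii)}$ is a routine functional-analytic completion argument: by hypothesis (i) $\mathcal P$ is dense in each $\hl_i$, and two norms that are equivalent on a common dense subspace have the same completion as a set, with equivalent norms; one also checks that the set-theoretic identification is compatible with evaluation functionals (both spaces sit inside $\mathrm{Hol}(\Omega)$), so $\hl_1=\hl_2$ as reproducing kernel Hilbert spaces of holomorphic functions, with equivalent norms. The implication $\mathrm{(iii)}\Rightarrow\mathrm{(iv)}$ is the standard dictionary between comparison of RKHS norms and domination of kernels: if $\hl_1=\hl_2$ with $\alpha\|f\|_{\hl_1}\le\|f\|_{\hl_2}\le\beta\|f\|_{\hl_1}$, then the identity map is bounded both ways, and by the Aronszajn characterization this is equivalent to $\alpha^2 K_1\preceq K_2\preceq \beta^2 K_1$ (adjusting constants), where $\preceq$ denotes the positive-definiteness order on kernels. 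Here I would invoke the standard fact that $K_2\preceq cK_1$ iff the inclusion $\hl_1\hookrightarrow\hl_2$ is bounded with norm $\le\sqrt c$.

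Finally, $\mathrm{(iv)}\Rightarrow\mathrm{(i)}$: from $\alpha K_1\preceq K_2\preceq\beta K_1$ we recover that $\hl_1$ and $\hl_2$ coincide as vector spaces with equivalent norms, hence the identity operator $X=\mathrm{id}$ is a bounded invertible map $\hl_1\to\hl_2$; since $X$ is literally the identity on the common dense subspace $\mathcal P$, it trivially intertwines the multiplication operators $M_j^{(1)}$ and $M_j^{(2)}$, which by hypothesis (iii) are bounded on both spaces, giving the similarity. I expect the main obstacle to be the bookkeeping in $\mathrm{(i)}\Rightarrow\mathrm{(ii)}$: making rigorous the claim that an intertwiner of the multiplication tuples is automatically a multiplication operator, and then exploiting the homogeneous grading (hypothesis (ii)) to strip the multiplier down to a constant. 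Everything after that is soft functional analysis (Aronszajn's kernel comparison theorem and completion arguments), so I would keep those steps brief. Note that the boundedness hypothesis (iii) is used essentially only to make $\mathrm{(iii)}\Rightarrow\mathrm{(i)}$ meaningful — it guarantees $\boldsymbol M^{(i)}$ is a genuine tuple of bounded operators so that "similar" makes sense.
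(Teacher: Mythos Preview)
Your chain of implications is right, and your observation that the intertwiner $X$ acts on polynomials as multiplication by $c:=X1$ (hence, by density and continuity of point evaluations, on all of $\hl_1$) is a clean entry point for $\mathrm{(i)}\Rightarrow\mathrm{(ii)}$. But the assertion that $c$ must be a nonzero \emph{constant} is false. Take $\hl_1=\hl_2$ (say the Hardy space on $\mathbb D$), so that $\boldsymbol M^{(1)}=\boldsymbol M^{(2)}$, and let $X=M_{2+z}$: this is a bounded invertible operator commuting with $M_z$, yet $c=2+z$ is not constant. Nothing about $\dim\mathcal P_0=1$ forces $X1\in\mathcal P_0$; the intertwining relation only makes $X$ \emph{lower triangular} with respect to $\oplus_n\mathcal P_n$ (as the paper records in the remark following its proof), not block diagonal.

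Your alternative, ``compare leading homogeneous parts,'' is the correct fix --- but note it does \emph{not} collapse the multiplier to a scalar; it shows that only the constant term $c_0=c(0)$ enters the estimate. For $p\in\mathcal P_n$ the $\mathcal P_n$-component of $Xp=cp$ is $c_0\,p$, so by orthogonality $|c_0|\,\|p\|_{\hl_2}\le\|Xp\|_{\hl_2}\le\|X\|\,\|p\|_{\hl_1}$; the reverse inequality comes from $X^{-1}=M_{1/c}$, and $c(0)\cdot(1/c)(0)=1$ guarantees both leading constants are nonzero. This is exactly the paper's argument in different clothing: the paper works with the block matrix $(X_{r,s})$ of $X$ relative to $\oplus_n\mathcal P_n$, derives $X_{n,n}(M_p^{(1)})_{n,0}=(M_p^{(2)})_{n,0}X_{0,0}$, and combines $\|X_{n,n}\|\le\|X\|$ with $X_{0,0}1=\eta\cdot 1$ for a nonzero scalar $\eta$ (your $c_0$). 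So once the erroneous ``$c$ is constant'' step is replaced by your own stated alternative, the two arguments coincide. The remaining implications $\mathrm{(ii)}\Rightarrow\mathrm{(iii)}\Rightarrow\mathrm{(iv)}\Rightarrow\mathrm{(i)}$ in your outline are handled the same way as in the paper.
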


\begin{proof}
The equivalence of (iii) and (iv) follows from the standard theory of reproducing kernel Hilbert spaces (cf. \cite{Aro}, \cite{PaulsenRaghupati}). Also it is clear that
\rm (ii) implies \rm (iii), since the polynomials are dense in both $\hl_1$ and $\hl_2$. If $\hl_1=\hl_2$, then the identity operator 
from $\hl_1$ to $\hl_2$ is a bounded invertible operator which intertwines the multiplication $d$-tuples $\boldsymbol M^{(1)}$  and $\boldsymbol M^{(2)}$, and consequently, \rm(iii) implies \rm(i). Now, to complete the proof, it remains to show that \rm (i) implies \rm (ii).

Suppose that $\boldsymbol{M}^{(1)}$ and $\boldsymbol{M}^{(2)}$ are similar. Then there exists an invertible operator $X:\hl_1\to \hl_2$ such that 
\begin{equation}\label{eqnsim}
XM_j^{(1)}=M_j^{(2)}X, \qquad j=1,\ldots,d.
\end{equation}

Since the subspaces $\mathcal P_n$, $n\geq 0$, are mutually orthogonal , it suffices to show that \eqref{eqnsim4} is satisfied for all $p\in \mathcal P_n$ and for some $\alpha, \beta >0$ (which is independent of $n$). Fix a polynomial $p$ in $\mathcal P_n$. Clearly, it follows from 
\eqref{eqnsim} that 
\begin{equation}\label{eqnsimp}
XM_{p}^{(1)}=M_{p}^{(2)}X.
\end{equation}
Let $\big(X_{r,s}\big)_{r,s=0}^\infty$  be the matrix representation of $X$   with respect to $\oplus \mathcal P_n$, that is, $X_{r,s}=P_{\mathcal P_r}X_{|\mathcal P_s}$. Similarly, let  $M_p^{(i)}=\big((M_p^{(i)})_{r,s}\big)_{r,s=0}^\infty$ be the matrix representation of $M_p^{(i)}$, $i=1,2$. Since $M_p^{(i)}$ maps $\mathcal P_s$ into $\mathcal P_{s+n}$, $i=1,2$, it clear that 
\begin{equation}
(M_p^{(i)})_{r,s}=\begin{cases}
(M_p^{(i)})_{|\mathcal P_s}, & \text{if $r=s+n$}\\
0, & \text{otherwise.}
\end{cases}
\end{equation}
Therefore it follows from \eqref{eqnsimp} that 
\begin{equation}\label{eqnsim3}
X_{r,s+n}(M_p^{(1)})_{s+n,s}=\begin{cases}
(M_p^{(2)})_{r,r-n}X_{r-n,s}, & \text{if $r-n\geq 0$}\\
0, &\text{otherwise.}
\end{cases}
\end{equation}
Choosing $r=n$ and $s=0$, we see that
\begin{equation}
(M_p^{(2)})_{n,0}X_{0,0}=X_{n,n}(M_p^{(1)})_{n,0}.
\end{equation}
Therefore
\begin{equation}\label{eqnsim2}
(M_p^{(1)})_{n,0}^*X_{n,n}^*X_{n,n}(M_p^{(1)})_{n,0}=
X_{0,0}^*(M_p^{(2)})_{n,0}^*(M_p^{(2)})_{n,0}X_{0,0}.
\end{equation}
Since $\|X_{n,n}\|\leq \|X\|$, we have 
$$X_{n,n}^*X_{n,n}\preceq \|X\|^2I.$$
Hence from \eqref{eqnsim2} we obtain
\begin{equation}\label{eqnsim5}
X_{0,0}^*(M_p^{(2)})_{n,0}^*(M_p^{(2)})_{n,0}X_{0,0}\preceq \|X\|^2 (M_p^{(1)})_{n,0}^*(M_p^{(1)})_{n,0}.
\end{equation}
Note that $X_{0,0}$ is a linear map from $\mathcal P_0$ to  $\mathcal P_0$ and $\dim \mathcal P_0=1$. Hence $X_{0,0}1=\eta 1$ for some $\eta\in \mathbb C$. Also, taking $p$ to be the polynomial $z_j$, $1\leq j\leq d,$ and $r=0$ in \eqref{eqnsim3} we see that
$$X_{0,s+1}(M_{z_j}^{(1)})_{s+1,s}=0, \qquad \text{for all}~~ s\geq 0.$$
Since this is true for all $j=1,\ldots,d$, it follows that 
$X_{0,s+1}=0$ for all $s\geq 0$. Moreover, since $X$ is invertible we must have $X_{0,0}\neq 0$. Otherwise, if $X_{0,s}=0$ for all $s$, then it is easy to see that $\mathcal P_0$ is orthogonal to range of $X$, which is a contradiction. Hence $X_{0,0}\neq 0$, and consequently $\eta\neq 0$. Therefore
\eqref{eqnsim5} gives 
\begin{equation*}
\langle (M_p^{(2)})_{n,0}X_{0,0}1, (M_p^{(2)})_{n,0} X_{0,0}1\rangle \leq \|X\|^2 \langle(M_p^{(1)})_{n,0}1, (M_p^{(1)})_{n,0}1 \rangle.
\end{equation*}
Consequently, 
\begin{equation*}
 |\eta|^2 \|p\|_{\hl_2}^2\leq \|X\|^2 \|p\|_{\hl_1}^2.
\end{equation*}
To finish the proof, note that \eqref{eqnsim} implies
\begin{equation}
X^{-1}M_j^{(2)}=M_j^{(1)}X^{-1},~~j=1,\ldots,d.
\end{equation}
Hence following the arguments used in the first part of this proof  we obtain that
\begin{equation*}
 |\zeta|^2 \|p\|_{\hl_1}^2\leq \|X^{-1}\|^2 \|p\|_{\hl_2}^2,
\end{equation*}
where $(X^{-1})_{0,0}1=\zeta. 1, \zeta\neq 0$.
This completes the proof.
\end{proof}

\begin{remark}
In the proof given above, we have shown that $X_{0,s}=0$ for all $s>0$. But using \eqref{eqnsim3}, it can be easily verified that $X_{r,s}=0$ for all $s>r$, that is, $X$ is lower triangular with respect to the decomposition $\oplus \mathcal P_n$. Consequently, $\zeta=\frac{1}{\eta}$.
\end{remark}

\begin{theorem}\label{thmsimilarity}
Let $\boldsymbol T_1$ and $\boldsymbol T_2$ be two operator tuples in $\mathcal A\mathbb{K}(\Omega)$. Suppose that $T_1\sim_u \boldsymbol{M}^{(a)}$ and $T_2\sim_u  \boldsymbol{M}^{(b)}$. Then the following statements are equivalent. 
\begin{itemize}
\item[\rm (i)]$\boldsymbol T_1$ and $\boldsymbol T_2$ are similar.
\item[\rm (ii)] 
There exist constants $\alpha, \beta >0$ such that  
\begin{equation}
\alpha \|p\|_{\hl^{(a)}}\leq  \|p\|_{\hl^{(b)}}\leq \beta  \|p\|_{\hl^{(a)}}, \qquad  p \in \mathcal P.
\end{equation}
\item[\rm (iii)]$\hl^{(a)}=\hl^{(b)}.$
\item[\rm(iv)]There exist  constants $\alpha, \beta >0$ such that 
$$\alpha K^{(a)} \preceq K^{(b)} \preceq\beta K^{(a)}.$$
\item[(v)] there exist constants
$\alpha, \beta>0$ such that 
\begin{equation*}
\alpha a_{\s}\leq b_{\s}\leq \beta a_{\s}, \qquad \s \in \overrightarrow  {\mathbb N}^r_0.
\end{equation*}
\end{itemize}
\end{theorem}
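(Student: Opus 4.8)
The plan is to deduce the equivalence of (i), (ii), (iii) and (iv) directly from Theorem~\ref{thmsim}, and then to show that (v) is equivalent to (ii) by an elementary computation with the Fischer--Fock norm on each isotypic component $\mathcal P_{\s}$.

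First I would check that Theorem~\ref{thmsim} applies with $\hl_1=\hl^{(a)}$ and $\hl_2=\hl^{(b)}$. By Proposition~\ref{proparazy} the polynomial ring $\mathcal P$ is dense in $\hl^{(a)}$ and in $\hl^{(b)}$, and the subspaces $\mathcal P_{\s}$ are mutually orthogonal; since $\mathcal P_n=\oplus_{|\s|=n}\mathcal P_{\s}$, this forces $\mathcal P_n\perp\mathcal P_m$ whenever $n\neq m$. Finally, because $\boldsymbol T_1,\boldsymbol T_2$ are commuting tuples of bounded operators in $\mathcal A\mathbb K(\Omega)$ with $\boldsymbol T_1\sim_u\boldsymbol M^{(a)}$ and $\boldsymbol T_2\sim_u\boldsymbol M^{(b)}$ (as provided by Theorem~\ref{thmKhomo}), the multiplication tuples $\boldsymbol M^{(a)}$ and $\boldsymbol M^{(b)}$ are bounded. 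Hence Theorem~\ref{thmsim} yields the equivalence of its four conditions, which here read exactly as (i)--(iv), once one notes that $\boldsymbol T_1$ is similar to $\boldsymbol T_2$ if and only if $\boldsymbol M^{(a)}$ is similar to $\boldsymbol M^{(b)}$ (similarity being invariant under unitary equivalence).

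It then remains to splice (v) into the chain, which I would do by proving (ii)$\Leftrightarrow$(v). The key observation is that on each $\mathcal P_{\s}$ the norm induced by $\hl^{(a)}$ is a scalar multiple of the Fischer--Fock norm: since $K^{(a)}=\sum_{\s}a_{\s}K_{\s}$ and $K_{\s}$ is the reproducing kernel of $\mathcal P_{\s}$ for $\langle\cdot,\cdot\rangle_{\mathcal F}$, one has $\|p\|_{\hl^{(a)}}^2=a_{\s}^{-1}\|p\|_{\mathcal F}^2$ for $p\in\mathcal P_{\s}$, and likewise $\|p\|_{\hl^{(b)}}^2=b_{\s}^{-1}\|p\|_{\mathcal F}^2$. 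Consequently, for every nonzero $p\in\mathcal P_{\s}$ the ratio $\|p\|_{\hl^{(b)}}^2/\|p\|_{\hl^{(a)}}^2=a_{\s}/b_{\s}$ depends only on $\s$. Since $\mathcal P=\oplus_{\s}\mathcal P_{\s}$ orthogonally in both $\hl^{(a)}$ and $\hl^{(b)}$, the two-sided estimate in (ii) holds for all $p\in\mathcal P$ with constants $\alpha,\beta$ if and only if it holds on each $\mathcal P_{\s}$ with the same constants, i.e. if and only if $\alpha^2\le a_{\s}/b_{\s}\le\beta^2$ for all $\s$; rearranging gives $\beta^{-2}a_{\s}\le b_{\s}\le\alpha^{-2}a_{\s}$, which is (v) after renaming the constants.

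I do not expect a genuine obstacle: the argument is essentially bookkeeping on top of Theorem~\ref{thmsim}. The two points that require care are (a) confirming that the hypothesis ``$\boldsymbol T_i$ bounded in $\mathcal A\mathbb K(\Omega)$'' together with Theorem~\ref{thmKhomo} really does place us in the setting of Theorem~\ref{thmsim}, and (b) keeping straight the reciprocal relationship between the kernel coefficient $a_{\s}$ and the norm it induces on $\mathcal P_{\s}$, so that the constants in (ii) and (v) are matched correctly (one pair being the reciprocals of the squares of the other).
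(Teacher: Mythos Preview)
Your proposal is correct and follows essentially the same approach as the paper: the equivalence of (i)--(iv) is pulled back from Theorem~\ref{thmsim}, and (v) is linked in via the identity $\|p\|_{\hl^{(a)}}^2=a_{\s}^{-1}\|p\|_{\mathcal F}^2$ on each $\mathcal P_{\s}$. The only cosmetic difference is that the paper closes the loop by showing (ii)$\Rightarrow$(v) and (v)$\Rightarrow$(iv), whereas you establish (ii)$\Leftrightarrow$(v) directly; both routes are immediate once the norm formula on $\mathcal P_{\s}$ is in hand.
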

\begin{proof}
The equivalence of \rm(i), \rm(ii), \rm(iii) and \rm(iv) follows easily from Theorem \ref{thmsim}. Assume that
\rm (ii) holds. Then \rm (v) is easily verified by  
choosing any polynomial $p$ in $\mathcal{P}_{\s}$ and using $\|p\|^2_{\hl^{(a)}}=\frac{\|p\|^2_{\mathcal F}}{a_{\s}}$ and 
$\|p\|^2_{\hl^{(b)}}=\frac{\|p\|^2_{\mathcal F}}{b_{\s}}$ in  \eqref{eqnsim4}. Also,  it is trivial to see that \rm (v) implies
\rm (iv).
\end{proof}

\begin{corollary}
Let $\nu_1, \nu_2 > \frac{a}{2}(r-1).$ Then the  $d$-tuple of multiplication operators $\boldsymbol{M}^{(\nu_1)}$ on 
$\mathcal H^{(\nu_1)}$ and $\boldsymbol{M}^{(\nu_2)}$ on  $\mathcal H^{(\nu_2)}$ are similar if and only if $\nu_1=\nu_2.$
\end{corollary}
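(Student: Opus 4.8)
The plan is to read this off from Theorem \ref{thmsimilarity}. Since $\boldsymbol M^{(\nu_1)}=\boldsymbol M^{(a)}$ and $\boldsymbol M^{(\nu_2)}=\boldsymbol M^{(b)}$ with $a_{\s}=(\nu_1)_{\s}$ and $b_{\s}=(\nu_2)_{\s}$, and since the hypothesis $\nu_i>\tfrac a2(r-1)$ guarantees that both multiplication tuples are bounded (the corollary to Theorem 3.4), so that Theorem \ref{thmsimilarity} applies, the equivalence of (i) and (v) there tells us that $\boldsymbol M^{(\nu_1)}$ and $\boldsymbol M^{(\nu_2)}$ are similar if and only if there are constants $\alpha,\beta>0$ with
\[
\alpha\,(\nu_1)_{\s}\le (\nu_2)_{\s}\le\beta\,(\nu_1)_{\s},\qquad \s\in\overrightarrow{\mathbb N}^r_0.
\]
If $\nu_1=\nu_2$ this is trivial (indeed $\boldsymbol M^{(\nu_1)}=\boldsymbol M^{(\nu_2)}$), so only the converse needs an argument.

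For the converse I would assume such $\alpha,\beta$ exist and restrict attention to the one-parameter family of signatures $\s=(l,0,\ldots,0)$, $l\in\mathbb N_0$. For these the generalized Pochhammer symbol degenerates to the ordinary one, $(\nu_i)_{(l,0,\ldots,0)}=(\nu_i)_l=\Gamma(\nu_i+l)/\Gamma(\nu_i)$, so the two-sided bound becomes
\[
\alpha\le\frac{\Gamma(\nu_1)}{\Gamma(\nu_2)}\,\frac{\Gamma(\nu_2+l)}{\Gamma(\nu_1+l)}\le\beta,\qquad l\in\mathbb N_0.
\]
Now I would invoke the elementary asymptotics $\Gamma(\nu_2+l)/\Gamma(\nu_1+l)\sim l^{\nu_2-\nu_1}$ as $l\to\infty$; equivalently, $\log\big((\nu_2)_l/(\nu_1)_l\big)=\sum_{k=0}^{l-1}\log\!\big(1+\tfrac{\nu_2-\nu_1}{\nu_1+k}\big)$, whose terms are eventually comparable to $\tfrac{\nu_2-\nu_1}{\nu_1+k}$ while $\sum_k\tfrac1{\nu_1+k}=\infty$. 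Hence if $\nu_2>\nu_1$ the middle expression tends to $+\infty$, contradicting the upper bound, and if $\nu_2<\nu_1$ it tends to $0$, contradicting the lower bound. Therefore $\nu_1=\nu_2$.

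I do not anticipate any real obstacle: the whole content is already packaged in Theorem \ref{thmsimilarity}, and what remains is the standard fact that a quotient of Pochhammer symbols with distinct parameters — both positive here, since $\nu_i-\tfrac a2(j-1)\ge\nu_i-\tfrac a2(r-1)>0$ for $j\le r$ — grows or decays like a positive power of $l$. The only point needing a word of care is that the comparison constants $\alpha,\beta$ must be uniform in $l$, but this is built into condition (v) of Theorem \ref{thmsimilarity} (they arise from $\|X\|$ and $\|X^{-1}\|$), so testing against the single family $\s=(l,0,\ldots,0)$ is legitimate.
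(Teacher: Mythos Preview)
Your proposal is correct and follows essentially the same route as the paper: invoke Theorem \ref{thmsimilarity} to get the two-sided bound on $(\nu_2)_{\s}/(\nu_1)_{\s}$, specialize to $\s=(l,0,\ldots,0)$, and use the standard Gamma-function asymptotics $(\nu_1)_l/(\nu_2)_l\sim l^{\nu_1-\nu_2}$ to force $\nu_1=\nu_2$. Your added justification via the logarithmic series and the remark on positivity of the Pochhammer factors are extra details the paper leaves implicit.
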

\begin{proof}
Suppose that $\boldsymbol{M}^{(\nu_1)}$ and $\boldsymbol{M}^{(\nu_2)}$  are similar. Then, by Theorem \ref{thmsimilarity}, there exist constants $\alpha, \beta >0$ such that $\alpha (\nu_1)_{\s} \leq (\nu_2)_{\s}\leq \beta (\nu_1)_{\s}$ for all  $\s\in \overrightarrow  {\mathbb N}^r_0$. Take $\s=(t,0,\ldots,0)$, $t\in \mathbb N_0.$ By the properties of the Gamma function we have  $\frac{(\nu_1)_{\s}}{(\nu_2)_{\s}}=\frac{(\nu_1)_t}{(\nu_2)_t} \sim t^{\nu_1-\nu_2}.$ Hence $\nu_1=\nu_2$. The other implication is trivial.
\end{proof}

\medskip \textit{Acknowledgment}.
The authors have benefited greatly from many hours of discussion on the topic of this paper with Prof. Harald Upmeier during his visits to the Department of Mathematics, Indian Institute of Science, Bangalore as the InfoSys Chair Professor. We also express our sincere thanks to Prof. Gadadhar Misra for several fruitful discussions and many useful suggestions, which resulted in a considerable refinement of the original draft.

\end{document}